\documentclass[12pt]{amsart}
\usepackage{amsthm}
\usepackage{amssymb}
\usepackage{amsmath}
\usepackage{color}
\textwidth=5in \textheight=7.5in

\theoremstyle{plain}
\newtheorem{proposition}{Proposition}
\newtheorem{theorem}[proposition]{Theorem}

\newtheorem{corollary}[proposition]{Corollary}

\theoremstyle{definition}
\newtheorem{definition}[proposition]{Definition}

\theoremstyle{definition}

\newtheorem{remark}[proposition]{Remark}

\numberwithin{equation}{section}

\numberwithin{proposition}{section}

\gdef\myletter{}

\let\savetheequation\theequation
\def\theequation{\savetheequation\myletter}

\usepackage{color}

\newcommand{\CC}{{\mathbb C}}

\newcommand{\RR}{{\mathbb R}}
\newcommand{\ZZ}{{\mathbb Z}}

\newcommand{\PP}{{\mathbb P}}

\newcommand{\NN}{{\mathbb N}}




\renewcommand{\date}{\today}
\def \bar{\overline}
\def \hat{\widehat}


\begin{document}


\title[$C-$Robin]{\bf $C-$Robin Functions and Applications}

\author{Norm Levenberg* and Sione Ma`u}{\thanks{*Supported by Simons Foundation grant No. 354549}}
\subjclass{32U15, \ 32U20, \ 32U40}%
\keywords{convex body, $C-$Robin function}%


\address{Indiana University, Bloomington, IN 47405 USA}
\email{nlevenbe@indiana.edu}

\address{University of Auckland, Auckland, New Zealand}  
\email{s.mau@auckland.ac.nz}


\maketitle

\begin{abstract} We continue the study in \cite{BBL} in the setting of pluripotential theory arising from polynomials associated to a convex body $C$ in $(\RR^+)^d$. Here we discuss $C-$Robin functions and their applications. In the particular case where $C$ is a simplex in $(\RR^+)^2$ with vertices $(0,0),(b,0),(a,0)$, $a,b>0$, we generalize results of T. Bloom to construct families of polynomials which recover the $C-$extremal function $V_{C,K}$ of a nonpluripolar compact set $K\subset \CC^d$.  \end{abstract}

\setcounter{tocdepth}{2}
\tableofcontents
\vfill \eject

\section{Introduction}\label{sec:intro} As in \cite{BBL}, we fix a convex body $C\subset (\RR^+)^d$ and we define the logarithmic indicator function
\begin{equation}\label{logind}H_C(z):=\sup_{J\in C} \log |z^J|:=\sup_{(j_1,...,j_d)\in C} \log[|z_1|^{j_1}\cdots |z_d|^{j_d}].\end{equation}
We assume throughout that 
\begin{equation}\label{sigmainkp} \Sigma \subset kC \ \hbox{for some} \ k\in \ZZ^+  \end{equation}
where 
$$\Sigma:=\{(x_1,...,x_d)\in \RR^d: 0\leq x_i \leq 1, \ \sum_{j=1}^d x_i \leq 1\}.$$ 
Then
$$H_C(z)\geq \frac{1}{k}\max_{j=1,...,d}\log^+ |z_j| =\frac{1}{k} H_{\Sigma}(z)$$
where $\log^+ |z_j| =\max[0,\log|z_j|]$. We define
$$L_C=L_C(\CC^d):= \{u\in PSH(\CC^d): u(z)- H_C(z) =O(1), \ |z| \to \infty \},$$ and 
$$L_{C,+}=L_{C,+}(\CC^d)=\{u\in L_C(\CC^d): u(z)\geq H_C(z) + C_u\}$$
where $PSH(\CC^d)$ denotes the class of plurisubharmonic functions on $\CC^d$. These are generalizations of the classical Lelong classes $L:=L_{\Sigma}, \ L^+:=L_{\Sigma,+}$ when $C=\Sigma$. Let $\CC[z]$ denote the polynomials in $z$ and 
\begin{equation}\label{polync} Poly(nC):=\{p\in\CC[z]\colon  p(z)=\sum_{\alpha\in nC} a_{\alpha}z^{\alpha} \}.\end{equation}
For a nonconstant polynomial $p$ we define 
\begin{equation}\label{cdegree}\deg_C(p)=\min\{ n\in\NN\colon p\in Poly(nC)\}.\end{equation} 
If $p\in Poly(nC), \ n\geq 1$ we have $\frac{1}{n}\log |p|\in L_C$; also each $u\in L_{C,+}(\CC^d)$ is locally bounded in $\CC^d$. 
For $C=\Sigma$, we write $Poly(nC)=\mathcal P_n$. 

The $C$-extremal function of a compact set $K\subset\CC^2$ is defined as the uppersemicontinuous (usc) regularization $V_{C,K}^*(z):=\limsup_{\zeta \to z} V_{C,K}(\zeta)$ of 
$$
V_{C,K}(z):=\sup\{u(z)\colon u\in L_C, u\leq 0 \hbox{ on } K\}.
$$
If $K$ is regular ($V_K:=V_{\Sigma,K}$ is continuous), then $V_{C,K}=V_{C,K}^*$ is continuous (cf., \cite{BosLev}). In particular, for $K=T^d=\{(z_1,...,z_d)\in \CC^d: |z_j|=1, \ j=1,...,d\}$, $V_{C,T^d}=V_{C,T^d}^*=H_C$ (cf., (2.7) in \cite{BBL}). If $K$ is not pluripolar, i.e., for any $u$ psh with $u=-\infty$ on $K$ we have $u\equiv -\infty$, the Monge-Amp\`ere measure $(dd^cV_{C,K}^*)^d$ is a positive measure with support in $K$ and $V_{C,K}^*=0$ quasi-everywhere (q.e.) on supp$(dd^cV_{C,K}^*)^d$ (i.e., everywhere except perhaps a pluripolar set).

Much of the recent development of this $C-$pluripotential theory can be found in \cite{BosLev}, \cite{BBL} and \cite{BBLL}. One noticeable item lacking from these works is a constructive approach to finding natural concrete families of polynomials associated to $K,C$ which recover $V_{C,K}$. In order to do this, following the approach of Tom Bloom in  \cite{Bapp} and \cite {Bfam}, we introduce a $C-$Robin function $\rho_u$ for a function $u\in L_C$. The ``usual'' Robin function ${\bf \rho_u}$ associated to $u\in L_{\Sigma}$ is defined as 
\begin{equation}\label{stdrobin} {\bf \rho_u}(z):=\limsup_{|\lambda|\to \infty} [u(\lambda z)-\log |\lambda|] \end{equation}
and this detects the asymptotic behavior of $u$. This definition is natural since the ``growth function'' $H_{\Sigma}(z)=\max_{j=1,...,d}\log^+ |z_j| $ satisfies $H_{\Sigma}(\lambda z) = H_{\Sigma}(z)+ \log |\lambda|$. Let 
$C$ be the triangle in $\RR^2$ with vertices $(0,0), (b,0), (0,a)$ where $a,b$ are relatively prime positive integers. Then 
\begin{enumerate}
\item $H_C(z_1,z_2)=\max[\log^+|z_1|^b, \log^+|z_2|^a]$ (note $H_C=0$ on the closure of the unit polydisk $P^2:=\{(z_1,z_2): |z_1|,|z_2|< 1\}$);
\item defining $\lambda \circ (z_1,z_2):=(\lambda^az_1,\lambda^b z_2)$, we have 
$$H_C(\lambda \circ (z_1,z_2))=H_C(z_1,z_2)+ab\log|\lambda|$$ for $(z_1,z_2)\in \CC^2 \setminus P^2$ and $|\lambda| \geq 1$.
\end{enumerate}
Given $u\in L_C(\CC^2)$, we define the $C-$Robin function of $u$ (Definition \ref{crobtri}) as 
$$\rho_u(z_1,z_2):=\limsup_{|\lambda|\to \infty} [u(\lambda \circ (z_1,z_2))-ab\log|\lambda|]$$
for $(z_1,z_2)\in \CC^2$. This agrees with (\ref{stdrobin}) when $a=b=1$; i.e., when $C=\Sigma \subset (\RR^+)^2$. For general convex bodies $C$, it is unclear how to define an analogue to recover the asymptotic behavior of $u\in L_C$. 

The next two sections give some general results in $C-$pluripotential theory which will be used further on but are of independent interest. Section 4 begins in earnest with the case where $C$ is a triangle in $\CC^2$. The key results utilized in our analysis are the use of an integral formula of Bedford and Taylor \cite{BT}, Theorem \ref{bt5.5} in section 6, yielding the fundamental Corollary \ref{keyresult}, and recent results on $C-$transfinite diameter in \cite{SDRNA} and \cite{Sione} of the second author in section 7. Our arguments in Sections 5 and 8 follow closely those of Bloom in \cite{Bapp} and \cite {Bfam}. The main theorem, Theorem \ref{mainthm}, is stated and proved in section 8; then explicit examples of families of polynomials which recover $V_{C,K}$ are provided. We mention that the results given here for triangles $C$ in $\RR^2$ with vertices $(0,0), (b,0), (0,a)$ where $a,b$ are relatively prime positive integers should generalize to the case of a simplex 
$$C=co\{(0,...,0),(a_1,0,...,0),...,(0,...,0,a_d)\}$$
in $(\RR^+)^d, \ d>2$ where $a_1,...,a_d$ are pairwise relatively prime (cf., Remark \ref{generald}). Section 9 indicates generalizations to weighted situations. 

\section{Rumely formula and transfinite diameter}

We recall the definition of $C-$transfinite diameter $\delta_C(K)$ of a compact set $K\subset \CC^d$ where $C$ satisfies (\ref{sigmainkp}). Letting $N_n$ be the dimension of $Poly(nC)$ in (\ref{polync}), we have
$$Poly(nC)= \hbox{span} \{e_1,...,e_{N_n}\}$$ 
where $\{e_j(z):=z^{\alpha(j)}=z_1^{\alpha_1(j)} \cdots z_d^{\alpha_d(j)}\}_{j=1,...,N_n}$ are the standard basis monomials in $Poly(nC)$ in any order. For 
points $\zeta_1,...,\zeta_{N_n}\in \CC^d$, let
$$VDM(\zeta_1,...,\zeta_{N_n}):=\det [e_i(\zeta_j)]_{i,j=1,...,N_n}  $$
$$= \det
\left[
\begin{array}{ccccc}
 e_1(\zeta_1) &e_1(\zeta_2) &\ldots  &e_1(\zeta_{N_n})\\
  \vdots  & \vdots & \ddots  & \vdots \\
e_{N_n}(\zeta_1) &e_{N_n}(\zeta_2) &\ldots  &e_{N_n}(\zeta_{N_n})
\end{array}
\right]$$
and for a compact subset $K\subset \CC^d$ let
$$V_n =V_n(K):=\max_{\zeta_1,...,\zeta_{N_n}\in K}|VDM(\zeta_1,...,\zeta_{N_n})|.$$
Then
$$\delta_C(K):= \limsup_{n\to \infty}V_{n}^{1/l_n}$$
is the $C-$transfinite diameter of $K$ where $l_n:=\sum_{j=1}^{N_n} {\rm deg}_C(e_j)$. The existence of the limit is not obvious but in this setting it is proved in \cite{BBL}. We return to this issue in section 7.

Next, for $u,v \in L_{C,+}$, we define the mutual energy 
\begin{equation}
\label{relendef}
\mathcal E (u,v):= \int_{\CC^d} (u-v)\sum_{j=0}^d (dd^cu)^j\wedge (dd^cv)^{d-j}.
\end{equation}
Here $dd^c=i\partial \bar \partial$ and for locally bounded psh functions, e.g., for $u,v \in L_{C,+}$, the complex Monge-Amp\`ere operators $(dd^cu)^j\wedge (dd^cv)^{d-j}$ are well-defined as positive measures. We have that $\mathcal E$ satisfies the cocycle property; i.e., for $u,v,w\in L_{C,+}$, (cf., \cite{BBL}, Proposition 3.3) 
$${\mathcal E}(u,v) +{\mathcal E}(v,w) + {\mathcal E}(w,u)=0.$$

Connecting these notions, we recall the following formula from \cite{BBL}.
\begin{theorem} \label{energyrumely} Let $K\subset \CC^d$ be compact and nonpluripolar. Then
$$
\log \delta_C(K)=  \frac{-1}{c}\mathcal E(V_{C,K}^*,H_C)
$$
where $c$ is a positive constant depending only on $d$ and $C$.
\end{theorem}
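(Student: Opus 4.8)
The plan is to reduce the identity to a one-parameter variational statement and to integrate it starting from $K=T^d$, where both sides vanish; this is the strategy behind the classical ($C=\Sigma$) Rumely formula, adapted to the $C$-weighted theory. First I would record the two facts supplied by \cite{BBL}: the limit defining $\delta_C(K)$ exists, so $\log\delta_C(K)=\lim_n\frac{1}{l_n}\log V_n(K)$, and arrays of $C$-Fekete points for $K$ (maximizers of $|VDM|$) have normalized counting measures converging weak-$*$ to the probability measure $\mu_{C,K}:=(dd^cV_{C,K}^*)^d\big/\!\int_{\CC^d}(dd^cV_{C,K}^*)^d$. It is convenient to pass to the $L^2$-version: fixing a $C$-Bernstein--Markov measure $\mu$ on $K$ and setting $Z_n(\mu):=\int_{K^{N_n}}|VDM(\zeta_1,\dots,\zeta_{N_n})|^2\,d\mu(\zeta_1)\cdots d\mu(\zeta_{N_n})$, one has $\lim_n\frac{1}{2l_n}\log Z_n(\mu)=\log\delta_C(K)$ by the usual comparison between Fekete--Leja and orthogonal-polynomial transfinite diameters; the Gram determinants $Z_n$ depend smoothly on the data, which is what the variational argument needs. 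Next, using the cocycle (equivalently, the antisymmetry) of $\mathcal E$, one has $\mathcal E(V_{C,K}^*,H_C)-\mathcal E(V_{C,L}^*,H_C)=\mathcal E(V_{C,K}^*,V_{C,L}^*)$ for nonpluripolar compacts $K,L$, so it suffices to (i) check the formula for a single reference set and (ii) establish the difference formula $\log\delta_C(K)-\log\delta_C(L)=-\frac1c\,\mathcal E(V_{C,K}^*,V_{C,L}^*)$. For (i) take $L=T^d$: the excerpt gives $V_{C,T^d}^*=H_C$, so the right side is $0$; and since the basis monomials of $Poly(nC)$ are orthonormal in $L^2$ of Haar measure on $T^d$ (which is $C$-Bernstein--Markov), $Z_n(\mathrm{Haar})=N_n!$, so $\frac{1}{2l_n}\log N_n!\to0$ because $l_n$ grows like $n\,N_n$ while $\log N_n!\sim N_n\log N_n$. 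Thus $\log\delta_C(T^d)=0$.

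For (ii) I would join the two equilibrium potentials by a path $t\mapsto u_t\in L_{C,+}$ --- for definiteness one may take $u_t$ to be the $C$-extremal function of a regularizing family $K_t$ interpolating between $L$ and $K$ (or work intrinsically in the space of $C$-equilibrium potentials) --- and differentiate both sides in $t$. The first analytic input is the first-variation formula for the Monge--Amp\`ere energy: integrating by parts and using symmetry of mixed Monge--Amp\`ere masses, one gets $\frac{d}{dt}\mathcal E(u_t,H_C)=(d+1)\int_{\CC^d}\dot u_t\,(dd^cu_t)^d$, valid for the locally bounded psh functions in $L_{C,+}$. The second input is the corresponding variation of the transfinite diameter: differentiating $\frac{1}{2l_n}\log Z_n(\mu_t)$ produces an average, against the weighted Fekete (equilibrium) measure, of the $t$-derivative of $\log|\cdot|$-type quantities, and passing to the limit via the weak-$*$ convergence to $\mu_{C,K_t}$ gives $\frac{d}{dt}\log\delta_C(K_t)=-\frac{d+1}{c}\int_{\CC^d}\dot u_t\,(dd^cu_t)^d$; the combinatorial ratios $N_n/l_n$ together with the mass $\int(dd^cV_{C,K}^*)^d$ are what produce the universal constant $c$, which therefore depends only on $d$ and $C$ (essentially through $\mathrm{vol}(C)$). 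Comparing the two and integrating in $t$ yields (ii), hence the theorem; the signs are consistent, since shrinking $K$ raises $V_{C,K}^*$ and $\mathcal E(V_{C,K}^*,H_C)$ while lowering $\delta_C(K)$.

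The hard part will be (ii): justifying the interchange of $\frac{d}{dt}$ with $\lim_n$ and identifying the derivative of the normalized log-Gram-determinant with the integral of the variation against the equilibrium measure. This is precisely where one needs the Bernstein--Markov property, uniform control on $C$-orthonormal polynomials (or on $C$-Fekete arrays), and the equidistribution theorem for $C$-Fekete points from \cite{BBL}; by contrast the reduction via the cocycle identity and the asymptotics of $N_n$ and $l_n$ for $Poly(nC)$ is routine bookkeeping. A more hands-on alternative would transcribe the Rumely--Lau--Varley computation: express $\log\delta_C(K)$ as an integral of the $C$-Robin function of $V_{C,K}^*$ over a projective-type base obtained by iterating, along towers of coordinate subspaces, the $\limsup$ implicit in $\delta_C$, and then convert that integral into $\mathcal E(V_{C,K}^*,H_C)$ by integration by parts; this avoids Gram determinants but trades the differentiability issue for a delicate Fubini/asymptotic estimate on $V_n(K)$.
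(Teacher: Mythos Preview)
The paper does not prove this theorem at all: it is simply quoted (``We recall the following formula from \cite{BBL}'') and used as a black box in the proof of Proposition~2.3. So there is no proof in the paper to compare your proposal against.

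For what it is worth, your sketch is in the right spirit. The result in \cite{BBL} is indeed obtained via the Berman--Boucksom variational machinery: one shows that the $C$-transfinite diameter and the Monge--Amp\`ere energy have the same first variation (an integral of the variation against the equilibrium measure $(dd^cV_{C,K}^*)^d$), and then anchors at a reference where both sides vanish. Your choice of $T^d$ as the reference is correct, and the computation $Z_n(\mathrm{Haar})=N_n!$ together with $l_n\sim n\,N_n$ giving $\log\delta_C(T^d)=0$ is fine. The genuine technical weight, as you say, lies in justifying the differentiation of $\tfrac{1}{2l_n}\log Z_n$ and identifying the limit with the equilibrium integral; in \cite{BBL} this is handled by passing through weighted $C$-Fekete points and their equidistribution rather than by literally constructing an interpolating family $K_t$ of compacts, which would be awkward to make smooth. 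But since the present paper treats the formula as input, there is nothing further to check here.
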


We will use the global domination principle for general $L_C$ and $L_{C,+}$ classes associated to convex bodies satisfying (\ref{sigmainkp}) (cf., \cite{LP}):

\begin{proposition} \label{gctp} For $C\subset (\RR^+)^d$ satisfying (\ref{sigmainkp}), let $u \in L_C$ and $v \in L_{C,+}$ with $u \leq v$ a.e.-$(dd^cv)^d$. Then $u \leq v$ in $\CC^d$.
\end{proposition}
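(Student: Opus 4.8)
The plan is to reduce the general convex-body statement to the classical global domination principle for the Lelong class $L=L_\Sigma$, which is the case $C=\Sigma$ and is well known (see \cite{Klimek} or the references in \cite{LP}). The key observation is the hypothesis \eqref{sigmainkp}: since $\Sigma\subset kC$ we have $H_\Sigma\le kH_C$, and conversely $H_C\le mH_\Sigma$ for some $m\in\ZZ^+$ because $C$ is bounded. Thus the classes $L_C$ and $L$ are related by linear rescalings of the growth, and we may hope to transfer the domination principle along this comparison.

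First I would fix $u\in L_C$ and $v\in L_{C,+}$ with $u\le v$ a.e.-$(dd^cv)^d$, and let $M$ be the constant with $u\le v$ outside a $(dd^cv)^d$-null set; the goal is $u\le v$ everywhere. Note $v$ is locally bounded (as remarked in the excerpt for members of $L_{C,+}$), so $(dd^cv)^d$ is a well-defined positive measure. The natural strategy is a contradiction/contact argument: suppose $u\le v$ fails at some point. Consider the function $w:=\max(u,v)$; one checks $w\in L_C$, $w\ge v$, and $w=v$ a.e.-$(dd^cv)^d$. The plurisubharmonic function $w-v$ (which is well defined and psh where $v$ is continuous, and can be handled in general by a standard regularization of $v$) is $\le 0$ on the support of $(dd^cv)^d$ up to a null set; one wants to conclude $w-v\le 0$ globally, i.e.\ $w=v$, i.e.\ $u\le v$.

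The mechanism for that conclusion is the classical domination principle in the Lelong class applied to suitably scaled functions. Concretely, I would use that $\frac1k u\in L$ (since $\frac1k u\le \frac1k H_C + O(1)$ and $H_C\ge \frac1k H_\Sigma$ gives the correct two-sided growth once we also use $H_C\le mH_\Sigma$, so $\frac1{km}u\in L$ after possibly a further rescale — the precise scalar is chosen so the rescaled function lands in $L$). Similarly a scaled version of $v$ lands in $L^+$. Since the Monge--Amp\`ere measure is unaffected by such scaling except by a constant factor, the a.e.\ inequality is preserved, and the classical result gives the inequality everywhere; scaling back yields $u\le v$ on $\CC^d$. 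Alternatively, and perhaps more cleanly, I would invoke the comparison-principle/Bedford--Taylor machinery directly: for $\epsilon>0$ the set $\{u>v+\epsilon\}$ is open, and on it one compares $(dd^cv)^d$ with $(dd^c\max(u,v+\epsilon))^d$; the hypothesis forces $(dd^cv)^d(\{u>v+\epsilon\})=0$, and then an argument using the fact that $v+\text{const}$ majorizes $H_C$ (so $v$ is "maximal at infinity" in the $L_C$ sense) pushes the mass balance to a contradiction unless the set is empty.

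The main obstacle I expect is the bookkeeping at infinity: one must be sure that the rescaling from $L_C$ to $L$ is done with a single uniform scalar and that the resulting functions genuinely have the $\log^+|z_j|$-type growth from below (not merely from above), which is exactly where \eqref{sigmainkp} and the boundedness of $C$ both get used; and one must handle the case where $v$ is not continuous by the standard trick of writing $v$ as a decreasing limit of continuous (or smooth) psh functions in $L_{C,+}$, checking that the hypothesis and the conclusion pass to the limit. Given that \cite{LP} is cited precisely for this statement, I anticipate the paper will simply quote it; my proof sketch is how one would derive it from the $C=\Sigma$ case by the rescaling reduction just described.
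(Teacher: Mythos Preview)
You correctly anticipate that the paper does not prove this proposition at all: it simply cites \cite{LP} and uses the result as a black box. So there is no ``paper's own proof'' to compare against.

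That said, your proposed reduction to the classical $C=\Sigma$ case by rescaling has a genuine gap. The issue is that no single scalar $c>0$ can simultaneously place $u/c$ in $L$ and $v/c$ in $L^+$, and you must use the \emph{same} scalar on both to preserve the a.e.\ inequality $u\le v$. From $\Sigma\subset kC$ and $C\subset m\Sigma$ one gets $\tfrac1k H_\Sigma\le H_C\le m H_\Sigma$. Hence $u\in L_C$ yields $u/m\in L$, and $v\in L_{C,+}$ yields $v/m\in L$ together with $v/m\ge \tfrac{1}{km}H_\Sigma+\text{const}$; but this lower bound is far weaker than $H_\Sigma+\text{const}$, so $v/m\notin L^+$ in general. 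Concretely, for the very triangles treated in the paper, $H_C(z_1,z_2)=\max[b\log^+|z_1|,\,a\log^+|z_2|]$ with $a\ne b$ is not a scalar multiple of $H_\Sigma$, and no rescaling of $v=H_C\in L_{C,+}$ lies in $L^+$. Since the classical domination principle requires the majorant to be in $L^+$, the reduction fails. (A minor additional slip: $w-v=\max(u-v,0)$ is not plurisubharmonic in general, so that auxiliary function cannot be used as you suggest.)

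Your alternative comparison-principle sketch is closer in spirit to how such results are actually established, but as written it is only a heuristic: the ``mass balance to a contradiction'' step is precisely the substantive part, and it requires growth control at infinity adapted to $H_C$, not to $H_\Sigma$. The argument in \cite{LP} works directly in the $C$-setting (via the associated compactification/envelope machinery) rather than by reducing to $\Sigma$.
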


We use these ingredients to prove the following.

\begin{proposition} \label{tdv} Let $E\subset F$ be compact and nonpluripolar. If $\delta_C(E)=\delta_C(F)$ then 
$V_{C,E}^*=V_{C,F}^*$. 
\end{proposition}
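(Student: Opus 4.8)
The plan is to establish the two inequalities $V_{C,F}^*\le V_{C,E}^*$ and $V_{C,E}^*\le V_{C,F}^*$ separately. The first is immediate from the definition: since $E\subset F$, any $u\in L_C$ with $u\le 0$ on $F$ also satisfies $u\le 0$ on $E$, so $V_{C,F}\le V_{C,E}$, and passing to usc regularizations gives $V_{C,F}^*\le V_{C,E}^*$. The content is therefore the reverse inequality, and for this I would feed the hypothesis $\delta_C(E)=\delta_C(F)$ into the Rumely-type formula of Theorem \ref{energyrumely} and combine it with the cocycle property of the mutual energy $\mathcal E$.

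First I would note that $V_{C,E}^*$ and $V_{C,F}^*$ both lie in $L_{C,+}$: they are locally bounded since $E,F$ are nonpluripolar, and since $E,F$ are bounded they are sandwiched between $H_C-M$ and $H_C+M'$ for suitable constants, so $\mathcal E$ may be applied to them and to $H_C$. By Theorem \ref{energyrumely}, $\delta_C(E)=\delta_C(F)$ gives $\mathcal E(V_{C,E}^*,H_C)=\mathcal E(V_{C,F}^*,H_C)$. Applying the cocycle identity to the triple $u=V_{C,E}^*$, $v=V_{C,F}^*$, $w=H_C$, together with the antisymmetry $\mathcal E(w,u)=-\mathcal E(u,w)$ (which is the case $w=u$ of the cocycle identity, since $\mathcal E(u,u)=0$), I obtain
$$\mathcal E(V_{C,E}^*,V_{C,F}^*)=\mathcal E(V_{C,E}^*,H_C)-\mathcal E(V_{C,F}^*,H_C)=0.$$

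Next I would expand the definition (\ref{relendef}):
$$\mathcal E(V_{C,E}^*,V_{C,F}^*)=\int_{\CC^d}(V_{C,E}^*-V_{C,F}^*)\sum_{j=0}^d (dd^cV_{C,E}^*)^j\wedge (dd^cV_{C,F}^*)^{d-j}.$$
By the first paragraph $V_{C,E}^*-V_{C,F}^*\ge 0$ pointwise, and every mixed Monge-Amp\`ere current $(dd^cV_{C,E}^*)^j\wedge (dd^cV_{C,F}^*)^{d-j}$ is a positive measure, so each of the $d+1$ integrals is nonnegative; their sum vanishing forces each to vanish, and in particular $\int_{\CC^d}(V_{C,E}^*-V_{C,F}^*)(dd^cV_{C,F}^*)^d=0$. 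As the integrand is nonnegative, this yields $V_{C,E}^*\le V_{C,F}^*$ a.e.-$(dd^cV_{C,F}^*)^d$. Finally I would invoke the global domination principle, Proposition \ref{gctp}, with $u=V_{C,E}^*\in L_C$ and $v=V_{C,F}^*\in L_{C,+}$, to conclude $V_{C,E}^*\le V_{C,F}^*$ throughout $\CC^d$. Combined with the trivial inequality this gives $V_{C,E}^*=V_{C,F}^*$.

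All steps are short; the only point that needs care is the bookkeeping of signs — ensuring the monotonicity $V_{C,F}^*\le V_{C,E}^*$ points the same way as the nonnegativity of the integrand extracted from $\mathcal E(V_{C,E}^*,V_{C,F}^*)=0$, so that the domination principle is applied in the correct direction — and checking that $V_{C,E}^*,V_{C,F}^*\in L_{C,+}$, which is where nonpluripolarity and compactness of $E,F$ enter.
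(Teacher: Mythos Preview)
Your proof is correct and follows essentially the same route as the paper: Rumely formula $\Rightarrow$ cocycle property $\Rightarrow$ vanishing of $\mathcal E(V_{C,E}^*,V_{C,F}^*)$ $\Rightarrow$ sign analysis of the expanded integral $\Rightarrow$ global domination principle. Your argument is in fact slightly more streamlined: by working with $\mathcal E(V_{C,E}^*,V_{C,F}^*)$ (integrand $V_{C,E}^*-V_{C,F}^*\ge 0$) you can immediately conclude that each of the $d+1$ nonnegative summands vanishes, whereas the paper first peels off the $j=0$ and $j=d$ terms using the q.e.\ vanishing of $V_{C,E}^*,V_{C,F}^*$ on the supports of their Monge--Amp\`ere measures before reaching the same conclusion.
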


\begin{proof} By Theorem \ref{energyrumely}, the hypothesis implies that $\mathcal E(V_{C,E}^*,H_C)=\mathcal E(V_{C,F}^*,H_C)$. Using the cocycle property,
$$0=\mathcal E(V_{C,E}^*,H_C) +\mathcal E(H_C,V_{C,F}^*) +\mathcal E(V_{C,F}^*,V_{C,E}^*)$$
$$=\mathcal E(V_{C,E}^*,H_C) -\mathcal E(V_{C,F}^*,H_C) +\mathcal E(V_{C,F}^*,V_{C,E}^*)$$
$$=\mathcal E(V_{C,F}^*,V_{C,E}^*).$$
From the definition (\ref{relendef}),
$$0=\mathcal E(V_{C,F}^*,V_{C,E}^*)=\int_{\CC^d} (V_{C,F}^*-V_{C,E}^*)\sum_{j=0}^d (dd^cV_{C,F}^*)^j\wedge (dd^cV_{C,E}^*)^{d-j}$$
$$=\int_{\CC^d} (V_{C,F}^*-V_{C,E}^*) (dd^cV_{C,F}^*)^d+\int_{\CC^d} (V_{C,F}^*-V_{C,E}^*) (dd^cV_{C,E}^*)^d$$
$$+\int_{\CC^d} (V_{C,F}^*-V_{C,E}^*)\sum_{j=1}^{d-1} (dd^cV_{C,F}^*)^j\wedge (dd^cV_{C,E}^*)^{d-j}.$$

Now $E\subset F$ implies $V_{C,F}^*\leq V_{C,E}^*$; i.e., $V_{C,F}^*- V_{C,E}^*\leq 0 \ \hbox{on} \ \CC^d$. Also, 
$$V_{C,F}^*=V_{C,E}^* = 0 \ \hbox{q.e. on supp}(dd^cV_{C,E}^*)^d$$
and
$$V_{C,F}^* = 0 \ \hbox{q.e. on supp}(dd^cV_{C,F}^*)^d.$$
Thus we see that
$$0=\int_{\CC^d} (-V_{C,E}^*) (dd^cV_{C,F}^*)^d+\int_{\CC^d} (V_{C,F}^*-V_{C,E}^*)\sum_{j=1}^{d-1} (dd^cV_{C,F}^*)^j\wedge (dd^cV_{C,E}^*)^{d-j}$$
where each term on the right-hand-side is nonpositive. Hence each term vanishes. In particular,
$$0=\int_{\CC^d} V_{C,E}^* (dd^cV_{C,F}^*)^d$$
implies that $V_{C,E}^*=0$ q.e. on supp$(dd^cV_{C,F}^*)^d$ (and hence a.e.-$(dd^cV_{C,F}^*)^d$). 

We finish the proof by using the domination principle (Proposition \ref{gctp}): we have $V_{C,E}^*,V_{C,F}^*\in L_{C,+}(\CC^d)$ with 
$$V_{C,E}^*=0\leq V_{C,F}^* \ \hbox{a.e.}-(dd^cV_{C,F}^*)^d$$
and hence $V_{C,E}^*\leq V_{C,F}^*$ on $\CC^d$; i.e., $V_{C,E}^* =V_{C,F}^*$ on $\CC^d$.

\end{proof}

\begin{remark} For $C=\Sigma$, Proposition \ref{tdv} was proved for regular compact sets $E,F$ in \cite{Bfam} and in general (compact and nonpluripolar sets) in \cite{BloomLev}. Both results utilized the ``usual'' Robin functions (\ref{stdrobin}) of $V_E^*,V_F^*$.

\end{remark}

\section{Other preliminary results: General} Let $K\subset \CC^d$ be compact and nonpluripolar and let $\mu$ be a positive measure on $K$ such that one can form orthonormal polynomials $\{p_{\alpha}\}$ using Gram-Schmidt on the monomials $\{z^{\alpha}\}$. 
We use the notion of degree given in (\ref{cdegree}): $\deg_C(p)=\min\{ n\in\NN\colon p\in Poly(nC)\}$. We have the Siciak-Zaharjuta type polynomial formula 
\begin{equation}\label{siczah} V_{C,K}(z)=\sup\{\tfrac{1}{\deg_C(p)}\log|p(z)|\colon p\in\CC[z], \|p\|_K\leq 1\}\end{equation}
(cf., \cite{BBL}, Proposition 2.3). It follows that $\{z\in \CC^d: V_{C,K}(z)=0\}=\hat K$, the polynomial hull of $K$:
$$\hat K:=\{z\in \CC^d: |p(z)|\leq ||p||_K, \ \hbox{all polynomials} \ p\}.$$

In this section, we follow the arguments of Zeriahi in \cite{Zer}.

\begin{proposition} \label{threeone} In this setting,
$$\limsup_{|\alpha|\to \infty} \tfrac{1}{\deg_C(p_{\alpha})}\log|p_{\alpha}(z)|\geq V_{C,K}(z), \ z\not\in \hat K.$$
\end{proposition}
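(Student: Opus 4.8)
The plan is to follow Zeriahi's argument, exploiting the Siciak--Zaharjuta formula \eqref{siczah} together with a Bernstein--Walsh type estimate for the orthonormal polynomials $p_\alpha$. First I would fix a point $z_0 \notin \hat K$ and a psh function $u \in L_C$ with $u \leq 0$ on $K$; by \eqref{siczah} it suffices to show that for each such $u$ one has $\limsup_{|\alpha|\to\infty} \frac{1}{\deg_C(p_\alpha)}\log|p_\alpha(z_0)| \geq u(z_0)$, and then take the sup over $u$ (using that $V_{C,K}$ is the sup of $\frac{1}{\deg_C(p)}\log|p|$ over polynomials, which are of this form). Equivalently, working with a single competitor polynomial $p$ with $\|p\|_K \le 1$, I want $\limsup \frac{1}{\deg_C(p_\alpha)}\log|p_\alpha(z_0)| \geq \frac{1}{\deg_C(p)}\log|p(z_0)|$.

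The key step is to expand a suitable power $p^m$ in terms of the orthonormal basis $\{p_\alpha\}$. Write $p^m = \sum_\alpha c_\alpha p_\alpha$, where the sum runs over $\alpha$ with $\deg_C(p_\alpha) \le m\deg_C(p)$ and $c_\alpha = \langle p^m, p_\alpha\rangle_{L^2(\mu)}$. By Cauchy--Schwarz, $|c_\alpha| \le \|p^m\|_{L^2(\mu)} \le \mu(K)^{1/2}\|p\|_K^m \le \mu(K)^{1/2}$. Hence
$$|p(z_0)|^m = \Bigl| \sum_\alpha c_\alpha p_\alpha(z_0)\Bigr| \le \mu(K)^{1/2} \sum_\alpha |p_\alpha(z_0)|.$$
The number of terms is $N_{m\deg_C(p)}$, which grows subexponentially in $m$ (polynomially, since $N_n$ is the number of lattice points in $nC$, of order $n^d$). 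Therefore, if we let $M := \max_\alpha |p_\alpha(z_0)|$ over this finite set, we get $|p(z_0)|^m \le \mu(K)^{1/2} N_{m\deg_C(p)} M$, so
$$M \ge \frac{|p(z_0)|^m}{\mu(K)^{1/2} N_{m\deg_C(p)}}.$$
Thus for each $m$ there is an index $\alpha = \alpha(m)$ with $\deg_C(p_{\alpha(m)}) \le m\deg_C(p)$ and
$$\tfrac{1}{\deg_C(p_{\alpha(m)})}\log|p_{\alpha(m)}(z_0)| \ge \tfrac{1}{m\deg_C(p)}\Bigl( m\log|p(z_0)| - \tfrac{1}{2}\log\mu(K) - \log N_{m\deg_C(p)}\Bigr).$$
Letting $m \to \infty$, the right-hand side tends to $\frac{1}{\deg_C(p)}\log|p(z_0)|$ because $\frac{1}{m}\log N_{m\deg_C(p)} \to 0$. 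This produces a sequence of indices $\alpha(m)$ with $|\alpha(m)| \to \infty$ (one must check the indices are genuinely distinct / escape every finite set; if $p(z_0) > 1$ after replacing $p$ by a scalar multiple, or by noting $z_0\notin\hat K$ so some competitor has $|p(z_0)|>1$, the degrees $\deg_C(p_{\alpha(m)})$ are forced to grow) realizing the bound, so the $\limsup$ over all $\alpha$ is at least $\frac{1}{\deg_C(p)}\log|p(z_0)|$. Taking the supremum over competitors $p$ gives $V_{C,K}(z_0)$.

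The main obstacle is the bookkeeping around the index set and the degree normalization: one needs that the family $\{\alpha(m)\}$ is unbounded in $|\alpha|$ so that the statement about $\limsup_{|\alpha|\to\infty}$ (rather than a limsup over a fixed finite set) is what we actually bound from below. This is handled by observing that since $z_0 \notin \hat K$ we may choose the competitor $p$ with $|p(z_0)| > 1$; then $|p_{\alpha(m)}(z_0)| \ge |p(z_0)|^m / (\mu(K)^{1/2}N_{m\deg_C(p)}) \to \infty$, which forces $\deg_C(p_{\alpha(m)}) \to \infty$ (a polynomial of bounded $C$-degree cannot have values blowing up along a fixed point) and hence $|\alpha(m)| \to \infty$. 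The only other point requiring a little care is the subexponential growth $\frac{1}{m}\log N_{m\deg_C(p)} \to 0$, which is immediate from $N_n = O(n^d)$ since $nC$ is a bounded convex body.
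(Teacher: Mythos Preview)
Your argument is correct and follows essentially the same route as the paper's proof: expand a competitor in the orthonormal basis $\{p_\alpha\}$, bound the Fourier coefficients by $\mu(K)^{1/2}$ via Cauchy--Schwarz, and extract an index $\alpha$ with large $|p_\alpha(z_0)|$ whose $C$-degree is forced to infinity. The only organizational differences are that the paper works with an arbitrary $Q_n\in Poly(nC)$ with $\|Q_n\|_K\le 1$ (feeding directly into the Siciak--Zaharjuta formula) while you fix one competitor $p$ and use powers $p^m$, and that the paper establishes $\deg_C(p_{\alpha_n})\to\infty$ by contradiction whereas you argue directly from $|p_{\alpha(m)}(z_0)|\to\infty$; one small point to tighten is that your displayed inequality $\tfrac{1}{\deg_C(p_{\alpha(m)})}\log|p_{\alpha(m)}(z_0)|\ge \tfrac{1}{m\deg_C(p)}(\cdots)$ uses $\log|p_{\alpha(m)}(z_0)|\ge 0$, which only holds for $m$ large, so it should be stated for large $m$ rather than ``for each $m$''.
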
  

\begin{proof} Let $Q_n:=\sum_{\alpha \in nC}c_{\alpha}p_{\alpha}\in Poly(nC)$ with $||Q_n||_K\leq 1$. Then
$$|c_{\alpha}|=|\int_K Q_n\bar p_{\alpha} d\mu| \leq \int_K |\bar p_{\alpha}| d\mu\leq \sqrt{\mu(K)}$$
by Cauchy-Schwarz. Hence 
$$|Q_n(z)|\leq N_n \sqrt{\mu(K)} \max_{\alpha \in nC}|p_{\alpha}(z)|$$ where recall $N_n=$dim$(Poly(nC))$.

Now fix $z_0\in \CC^d\setminus \hat K$ and let $\alpha_n\in nC$ be the multiindex with $\deg_C(p_{\alpha_n})$ largest such that 
$$|p_{\alpha_n}(z_0)|=\max_{\alpha \in nC}|p_{\alpha}(z_0)|.$$
We claim that taking any sequence $\{Q_n\}$ with $||Q_n||_K\leq 1$ for all $n$, 
$$\lim_{n\to \infty} \deg_C(p_{\alpha_n})=+\infty.$$
For if not, then by the above argument, there exists $A< +\infty$ such that for any $n$ and any $Q_n\in Poly(nC)$ with $||Q_n||_K\leq 1$,
$$|Q_n(z_0)|\leq N_n \sqrt{\mu(K)} \max_{\deg_C(p_{\alpha})\leq A }|p_{\alpha}(z_0)|=N_n M(z_0)$$
where $M(z_0)$ is independent of $n$. But then
$$V_{C,K}(z_0)=\sup\{\tfrac{1}{\deg_C(p)}\log|p(z)|\colon p\in\CC[z], \|p\|_K\leq 1\}$$
$$\leq \limsup_{n\to \infty} [\frac{1}{n}\log N_n +\frac{1}{n}\log M(z_0)]=0$$
which contradicts $z_0 \in \CC^d\setminus \hat K$. We conclude that for any $z \in \CC^d\setminus \hat K$, for any $n$ and any $Q_n\in Poly(nC)$ with $||Q_n||_K\leq 1$,
$$\frac{1}{n}\log|Q_n(z)|\leq \frac{1}{n}\log N_n +\frac{1}{n} \log |p_{\alpha_n}(z)|$$
where we can assume $\deg_C(p_{\alpha_n})\uparrow +\infty$. Hence, for such $z$, 
$$V_{C,K}(z)\leq \limsup_{n \to \infty} \tfrac{1}{n}\log|p_{\alpha_n}(z)| \leq \limsup_{n \to \infty} \tfrac{1}{\deg_C(p_{\alpha_n})}\log|p_{\alpha_n}(z)|$$
$$\leq \limsup_{|\alpha|\to \infty} \tfrac{1}{\deg_C(p_{\alpha})}\log|p_{\alpha}(z)|$$
where we have used  $\deg_C(p_{\alpha_n})\leq n$.

\end{proof}

Suppose $\mu$ is any {\it Bernstein-Markov measure} for $K$; i.e., for any $\epsilon >0$, there exists a constant $c_{\epsilon}$ so that
$$||p_n||_K\leq c_{\epsilon}(1+\epsilon)^n ||p_n||_{L^2(\mu)}, \ p_n\in Poly(nC), \ n=1,2,...$$
From (\ref{sigmainkp}), $\Sigma \subset kC \subset m\Sigma$ for some $k,m$ and we can replace $(1+\epsilon)^n$ by $(1+\epsilon)^{\deg_C(p_n)}$. In particular, for the orthonormal polynomials $\{p_{\alpha}\}$,
$$||p_{\alpha}||_K \leq c_{\epsilon}(1+\epsilon)^{\deg_C(p_{\alpha})}.$$ 
Thus
$$\limsup_{|\alpha|\to \infty} \tfrac{1}{\deg_C(p_{\alpha})}\log||p_{\alpha}||_K\leq 0$$
and we obtain equality in the previous result:

\begin{corollary}\label{bmext} In this setting, if $\mu$ is any Bernstein-Markov measure for $K$,
$$\limsup_{|\alpha|\to \infty} \tfrac{1}{\deg_C(p_{\alpha})}\log|p_{\alpha}(z)|= V_{C,K}(z), \ z\not\in \hat K.$$
\end{corollary}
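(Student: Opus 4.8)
The plan is to combine Proposition \ref{threeone}, which gives the lower bound $\limsup_{|\alpha|\to\infty}\frac{1}{\deg_C(p_\alpha)}\log|p_\alpha(z)|\geq V_{C,K}(z)$ for $z\notin\hat K$, with a matching upper bound that comes from the Bernstein-Markov property of $\mu$. So the only thing left to prove is the reverse inequality
$$\limsup_{|\alpha|\to\infty}\tfrac{1}{\deg_C(p_\alpha)}\log|p_\alpha(z)|\leq V_{C,K}(z),\qquad z\notin\hat K.$$

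First I would record the estimate $\limsup_{|\alpha|\to\infty}\frac{1}{\deg_C(p_\alpha)}\log\|p_\alpha\|_K\leq 0$, which is exactly what the discussion preceding the corollary establishes: starting from the Bernstein-Markov inequality $\|p_n\|_K\leq c_\epsilon(1+\epsilon)^n\|p_n\|_{L^2(\mu)}$ for $p_n\in Poly(nC)$, one uses the sandwiching $\Sigma\subset kC\subset m\Sigma$ from (\ref{sigmainkp}) to pass from the ordinary degree to $\deg_C$, so that $\|p_n\|_K\leq c_\epsilon(1+\epsilon)^{\deg_C(p_n)}\|p_n\|_{L^2(\mu)}$; applying this to each orthonormal polynomial $p_\alpha$ (which has $\|p_\alpha\|_{L^2(\mu)}=1$) gives $\|p_\alpha\|_K\leq c_\epsilon(1+\epsilon)^{\deg_C(p_\alpha)}$, and taking $\frac{1}{\deg_C(p_\alpha)}\log$ and then $\limsup$ yields a bound of $\log(1+\epsilon)$, hence $\leq 0$ since $\epsilon>0$ is arbitrary.

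Next, for fixed $z\notin\hat K$ and any index $\alpha$ with $\|p_\alpha\|_K>0$, the normalized polynomial $p_\alpha/\|p_\alpha\|_K$ is admissible in the Siciak-Zaharjuta formula (\ref{siczah}), so $\frac{1}{\deg_C(p_\alpha)}\log|p_\alpha(z)|-\frac{1}{\deg_C(p_\alpha)}\log\|p_\alpha\|_K\leq V_{C,K}(z)$. Rearranging gives $\frac{1}{\deg_C(p_\alpha)}\log|p_\alpha(z)|\leq V_{C,K}(z)+\frac{1}{\deg_C(p_\alpha)}\log\|p_\alpha\|_K$; taking $\limsup$ as $|\alpha|\to\infty$ and using the previous paragraph's bound on $\limsup\frac{1}{\deg_C(p_\alpha)}\log\|p_\alpha\|_K\leq 0$ gives the desired upper bound. (A minor technical point: one should note $\deg_C(p_\alpha)\to\infty$ as $|\alpha|\to\infty$, which follows since there are only finitely many monomials in any $Poly(nC)$, so dividing by $\deg_C(p_\alpha)$ is harmless in the limit; and the finitely many $\alpha$ with small degree don't affect the $\limsup$.) Combining this upper bound with the lower bound from Proposition \ref{threeone} yields equality, which is the assertion of the corollary.

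The argument is essentially bookkeeping, so there is no serious obstacle; the only point requiring a little care is the passage from the ordinary-degree Bernstein-Markov hypothesis to the $C$-degree statement, which relies on (\ref{sigmainkp}) to control $\deg_C$ in terms of ordinary degree up to fixed multiplicative constants. Everything else is an immediate consequence of the Siciak-Zaharjuta formula and Proposition \ref{threeone}.
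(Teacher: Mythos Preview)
Your proof is correct and follows exactly the approach implicit in the paper: the lower bound is Proposition \ref{threeone}, and the upper bound is obtained by feeding the normalized polynomials $p_\alpha/\|p_\alpha\|_K$ into the Siciak--Zaharjuta formula (\ref{siczah}) together with the Bernstein--Markov estimate $\limsup_{|\alpha|\to\infty}\frac{1}{\deg_C(p_\alpha)}\log\|p_\alpha\|_K\leq 0$ established just before the corollary. The paper simply asserts ``we obtain equality'' without spelling out this last step, so your write-up is a faithful (and slightly more detailed) expansion of the intended argument.
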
  

We remark that Bernstein-Markov measures exist in abundance; cf., \cite{BLPW}. Our goal in subsequent sections is to generalize the results in \cite{Bapp} and \cite {Bfam} of T. Bloom to give more constructive ways of recovering $ V_{C,K}$ from special families of polynomials.

\section{$C-$Robin function} We begin with the observation that a proof similar to that of Theorem 5.3.1 of \cite{K} yields the following result.

\begin{theorem}\label{cklimek} Let $C,C'\subset(\RR^+)^d$ be convex bodies and let $F:\CC^d\to \CC^d$ be a proper polynomial mapping satisfying
$$0<\liminf_{|z|\to \infty}  \frac{\sup_{J\in C} |[F(z)]^{J}|}{\sup_{J\in C'} |z^{J'}|} \leq \limsup_{|z|\to \infty}  \frac{\sup_{J\in C} |[F(z)]^J|}{\sup_{J'\in C'} |z^{J'}|} < \infty.$$
Then for $K\subset \CC^d$ compact,
$$V_{C,K}(F(z))=V_{C',F^{-1}(K)}(z).$$

\end{theorem}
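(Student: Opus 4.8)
The plan is to reduce the identity $V_{C,K}(F(z)) = V_{C',F^{-1}(K)}(z)$ to the Siciak--Zaharjuta polynomial formula (\ref{siczah}) by pulling back polynomials through $F$, mimicking the strategy of Theorem 5.3.1 of \cite{K}. The growth hypothesis on $F$ is designed precisely so that $H_C \circ F$ and $H_{C'}$ differ by $O(1)$, which is the bridge between the two Lelong classes $L_C$ and $L_{C'}$.

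First I would record the analytic consequence of the displayed double inequality: taking logarithms, there is a constant $M$ so that $|H_C(F(z)) - H_{C'}(z)| \le M$ for all $z$ with $|z|$ large, and hence (adjusting the constant, using that both functions are continuous on compacta) $H_C(F(z)) - H_{C'}(z) = O(1)$ on all of $\CC^d$. From this, composition with $F$ maps $L_C$ into $L_{C'}$: if $u \in L_C$ then $u \circ F$ is plurisubharmonic (as $F$ is holomorphic) and $u(F(z)) - H_{C'}(z) = [u(F(z)) - H_C(F(z))] + [H_C(F(z)) - H_{C'}(z)] = O(1)$, so $u \circ F \in L_{C'}$. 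For the reverse direction I would use that $F$ is proper, hence surjective and finite-to-one: given $v \in L_{C'}$, the function $w \mapsto \sup\{v(z) : F(z) = w\}$ is a candidate for a member of $L_C$, but pushing forward plurisubharmonicity under a proper map requires a little care, so in practice I would instead argue entirely at the level of polynomials and the extremal-function definition, avoiding an explicit pushforward.

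The cleanest route is via (\ref{siczah}). For one inequality: if $p \in \CC[z]$ with $\|p\|_K \le 1$ and $\deg_C(p) = n$, then $p \circ F$ is a polynomial, and the growth control on $F$ gives $\deg_{C'}(p \circ F) \le c\, n$ for a constant $c$ independent of $p$ (since $p \in Poly(nC)$ forces $p(F(z))$ to have growth $\le n H_C(F(z)) + O(1) \le n H_{C'}(z) + O(n)$, and one uses $\Sigma \subset kC'$-type estimates to convert a growth bound into a $Poly(\cdot C')$ membership — this is essentially Proposition 2.3's argument in \cite{BBL}); moreover $\|p \circ F\|_{F^{-1}(K)} = \|p\|_{F(F^{-1}(K))} \le \|p\|_K \le 1$. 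Taking $\sup$ over such $p$ and comparing rates yields $V_{C,K}(F(z)) \le C' \cdot V_{C',F^{-1}(K)}(z)$ up to tracking constants; the relatively-prime/normalization bookkeeping has to come out so that the constant is exactly $1$. The reverse inequality runs symmetrically but is the subtler half, because one must produce polynomials in $z$ from polynomials in $w = F(z)$: here properness of $F$ enters, via the fact that $F_*\O$ is finite over $\O$, so that symmetric functions of the fibers of $F$ are polynomials in $w$; concretely, given $q \in \CC[z]$ with $\|q\|_{F^{-1}(K)} \le 1$, one forms $\tilde q(w) := \prod_{F(z) = w} q(z)$ (a polynomial in $w$ of controlled $C$-degree with $\|\tilde q\|_K \le 1$) and extracts the estimate from $|q(z)| \le |\tilde q(F(z))|^{1/(\deg F)} \cdot (\text{lower order})$.

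I expect the main obstacle to be exactly this last point — converting the growth/degree bookkeeping through the proper map $F$ into the matching statement $\deg_{C'}(p \circ F) \sim \deg_C(p)$ and $\deg_C(\tilde q) \sim \deg_{C'}(q)$ \emph{with the correct proportionality constant}, so that the two inequalities combine to an equality rather than a two-sided estimate. The hypothesis $0 < \liminf \le \limsup < \infty$ with the specific normalization (and the fact that, in the applications, $F$ is a monomial-type map between simplices) is what pins the constant down; making that fully rigorous is where following the template of \cite{K}, Theorem 5.3.1, pays off, since Klimek handles precisely the $C = C' = \Sigma$ prototype and the general case is a bookkeeping decoration of his argument. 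Everything else — plurisubharmonicity of compositions, the $O(1)$ bridge, invoking (\ref{siczah}) and the characterization $\{V_{C,K} = 0\} = \hat K$ — is routine.
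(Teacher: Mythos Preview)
Your pullback direction ($u\in L_C \Rightarrow u\circ F\in L_{C'}$, giving $V_{C,K}(F(z))\le V_{C',F^{-1}(K)}(z)$) is exactly what the paper does, using the $\limsup<\infty$ hypothesis. The difficulty, as you correctly sense, is the reverse inequality.

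Here your proposal has a real gap. You mention the pushforward $w\mapsto \sup\{v(z):F(z)=w\}$ and then set it aside as ``requiring a little care'' in favor of a polynomial argument. But that polynomial route does not close: the product $\tilde q(w)=\prod_{F(z)=w}q(z)$ is indeed a polynomial in $w$, but the inequality $|q(z)|\le |\tilde q(F(z))|^{1/\deg F}$ you write is false in general (some fiber values may have modulus $<1$), and the $C$-degree of $\tilde q$ relative to $\deg_{C'}(q)$ is not pinned down by the hypothesis without further work. Your own caveat that the ``normalization bookkeeping has to come out so that the constant is exactly $1$'' is precisely the unfinished step, and it does not follow from the $O(1)$ growth bound alone via this construction.

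The paper instead takes the route you abandoned: for $u\in L_{C'}$ with $u\le 0$ on $F^{-1}(K)$, set $v(w):=\sup\{u(z):F(z)=w\}$. This is plurisubharmonic because $F$ is proper (this is standard --- see e.g.\ Klimek, Proposition~2.9.26), it satisfies $v\le 0$ on $K$, and the $\liminf>0$ hypothesis gives $v\in L_C$ directly, with no constant to match. Then $u(z)\le v(F(z))\le V_{C,K}(F(z))$ and taking the supremum over $u$ finishes. So the ``little care'' you were worried about is a one-line citation, and avoids entirely the degree bookkeeping that your polynomial approach cannot resolve.
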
 

\begin{proof} Since $H_C(z):=\sup_{J\in C} \log |z^J|$, the hypothesis can be written  
 \begin{equation}\label{liminf}0<  \liminf_{|z|\to \infty} \frac{e^{H_{C}(F(z))}}{e^{H_{C'}(z)}} \leq \limsup_{|z|\to \infty} \frac{e^{H_{C}(F(z))}}{e^{H_{C'}(z)}}<\infty .\end{equation}
We first show that $\liminf_{|z|\to \infty} \frac{e^{H_{C}(F(z))}}{e^{H_{C'}(z)}} >0$ implies
\begin{equation}\label{lhineq} V_{C',F^{-1}(K)}(z)\leq V_{C,K}(F(z)).\end{equation}
Indeed, starting with $u\in L_{C'}$ with $u\leq 0$ on $F^{-1}(K)$, take 
$$v(z):= \sup u(F^{-1}(z))$$
where the supremum is over all preimages of $z$. Then $v\in PSH(\CC^d)$ and $v\leq 0$ on $K$. Note that $v(F(z))=u(z)$. 
Now $u\in L_{C'}$ implies
$$\limsup_{|z|\to \infty} [u(z)-H_{C'}(z)]\leq M <\infty.$$
To show $v\in L_{C}$, since $F$ is proper it suffices to show 
$$\limsup_{|z|\to \infty} [v(F(z))-H_{C}(F(z))]<\infty.$$
We have
$$\limsup_{|z|\to \infty} [v(F(z))-H_{C}(F(z))]$$
$$=\limsup_{|z|\to \infty} [v(F(z))- \ H_{C'}(z) +  \ H_{C'}(z)-H_{C}(F(z))]$$
$$\leq \limsup_{|z|\to \infty} [u(z)- \ H_{C'}(z)]-\liminf_{|z|\to \infty}[H_{C}(F(z))- \ H_{C'}(z)]$$
$$\leq M -\liminf_{|z|\to \infty}[H_{C}(F(z))- \ H_{C'}(z)]<\infty$$
from the hypothesized condition in (\ref{liminf}) so $v\in L_{C}$ and (\ref{lhineq}) follows.

Next we show that 
$$\limsup_{|z|\to \infty} \frac{e^{H_{C}(F(z))}}{e^{H_{C'}(z)}}<\infty \ \hbox{implies} \  V_{C',F^{-1}(K)}(z)\geq V_{C,K}(F(z)).$$
Letting $u\in L_{C}$ with $u\leq 0$ on $K$, we have $u(F(z))\in PSH(\CC^d)$ and $u(F(z))\leq 0$ on $F^{-1}(K)$ and we are left to show $u(F(z))\in L_{C'}$. Now 
$$\limsup_{|z|\to \infty}[u(F(z))-H_{C'}(z)]$$
$$=\limsup_{|z|\to \infty}[u(F(z))-H_{C}(F(z))+H_{C}(F(z))-H_{C'}(z)]$$
$$\leq \limsup_{|z|\to \infty}[u(F(z))-H_{C}(F(z))]+  \limsup_{|z|\to \infty}[H_{C}(F(z))-H_{C'}(z)]<\infty$$
from the hypothesized condition in (\ref{liminf}) and $u\in L_{C}$.

\end{proof}

We can apply this in $\CC^d$ with $C'=c \Sigma$ where $c\in \ZZ^+$ and $C$ is an arbitrary convex body in $(\RR^+)^d$. Given $K\subset \CC^d$ compact, provided we can find $F$ satisfying the hypotheses of Theorem \ref{cklimek}, from the relation
$$V_{C,K}(F(z))=V_{c\Sigma,F^{-1}(K)}(z)=cV_{F^{-1}(K)}(z)\in cL(\CC^d)$$
we can form a scaling of the standard Robin function (\ref{stdrobin}) for $V_{F^{-1}(K)}$, i.e., $\rho_{F^{-1}(K)}:=\rho_{\Sigma,F^{-1}(K)}$, and we have
$$c\rho_{F^{-1}(K)}(z)=\limsup_{|\lambda|\to \infty} [V_{C,K}(F(\lambda z))-c\log |\lambda|].$$
This gives a connection between the standard Robin function $\rho_{F^{-1}(K)}$ and something resembling a possible definition of a $C-$Robin function $\rho_{C,K}$ (the right-hand-side). Given $K\subset \CC^d$, the set $F^{-1}(K)$ can be very complicated so that, apriori, this relation has little practical value.

For the rest of this section, and for most of the subsequent sections, we work in $\CC^2$ with variables $z=(z_1,z_2)$ and we let $C$ be the triangle with vertices $(0,0), (b,0), (0,a)$ where $a,b$ are relatively prime positive integers. We recall from the introduction:
\begin{enumerate}
\item $H_C(z_1,z_2)=\max[\log^+|z_1|^b, \log^+|z_2|^a]$ (note $H_C=0$ on the closure of the unit polydisk $P^2:=\{(z_1,z_2): |z_1|,|z_2|< 1\}$);
\item defining $\lambda \circ (z_1,z_2):=(\lambda^az_1,\lambda^b z_2)$, we have 
\begin{equation} \label{2above} H_C(\lambda \circ (z_1,z_2))=H_C(z_1,z_2)+ab\log|\lambda|\end{equation} for $(z_1,z_2)\in \CC^2 \setminus P^2$ and $|\lambda| \geq 1$.
\end{enumerate}

\begin{definition} \label{crobtri} Given $u\in L_C$, we define the $C-$Robin function of $u$:  
$$\rho_u(z_1,z_2):=\limsup_{|\lambda|\to \infty} [u(\lambda \circ (z_1,z_2))-ab\log|\lambda|]$$
for $(z_1,z_2)\in \CC^2$.
\end{definition}

We claim that $\rho_u\in L_C$. To see this, we lift the circle action on $\CC^2$, 
$$\lambda \circ (z_1,z_2):=(\lambda^az_1,\lambda^b z_2),$$
to $\CC^3$ in the following manner:
$$\lambda \circ (t,z_1,z_2):=(\lambda t,\lambda^az_1,\lambda^b z_2).$$
Given a function $u\in L_C(\CC^2)$, we can associate a function $h$ on $\CC^3$ which satisfies
\begin{enumerate}
\item $h(1,z_1,z_2)=u(z_1,z_2)$ for all $(z_1,z_2)\in \CC^2$;
\item $h\in L_{\tilde C}(\CC^3)$ where $\tilde C=co\{(0,0,0),(1,0,0),(0,b,0),(0,0,a)\}$; 
\item $h$ is $ab-$log-homogeneous: 
$$h(\lambda \circ (t,z_1,z_2))=h(t,z_1,z_2)+ab \log |\lambda|.$$
\end{enumerate}

\noindent Indeed, we simply set
$$h(t,z_1,z_2):=u(\frac{z_1}{t^a},\frac{z_2}{t^b})+ab \log |t| \ \hbox{if} \ t\not = 0 \ \hbox{and}$$
$$h(0,z_1,z_2) :=\limsup_{(t,w_1,w_2)\to (0,z_1,z_2)} h(t,w_1,w_2)  \ \hbox{if} \ t = 0.$$
Now since $h$ is psh on $\CC^3$, we have
\begin{equation}\label{pshofrho} h(0,z_1,z_2) :=\limsup_{(t,z_1,z_2)\to (0,z_1,z_2)} h(t,z_1,z_2) = \rho_u(z_1,z_2). \end{equation} 

\begin{proposition}\label{prop:1.2} For $u\in L_C$, we have $\rho_u\in L_C$. In particular, $\rho_u$ is plurisubharmonic.
\end{proposition}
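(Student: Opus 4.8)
The plan is to exploit the lift to $\CC^3$ that the authors have already set up. Given $u\in L_C(\CC^2)$, define $h$ on $\CC^3$ by $h(t,z_1,z_2):=u(z_1/t^a,z_2/t^b)+ab\log|t|$ for $t\neq 0$, extended to $\{t=0\}$ by upper-semicontinuous regularization. The first step is to verify carefully that $h$ is plurisubharmonic on $\CC^3$: away from $\{t=0\}$ it is a composition of the psh function $u$ with the holomorphic map $(t,z_1,z_2)\mapsto (z_1/t^a,z_2/t^b)$ plus the pluriharmonic term $ab\log|t|$, hence psh there; and one must check that $h$ is locally bounded above near $\{t=0\}$ (using $u\in L_C$, so $u(w)-H_C(w)=O(1)$, combined with property (1) of $H_C$, namely $H_C(z_1/t^a,z_2/t^b)+ab\log|t|$ stays controlled as $t\to 0$ with $z$ bounded) so that the usc-regularized extension is genuinely psh across the hyperplane. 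This is essentially the content of the claimed properties (1)--(3) of $h$ listed just before the proposition, so I would state those as established and move on.

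Next I would establish the key identity $(\ref{pshofrho})$: that $h(0,z_1,z_2)=\rho_u(z_1,z_2)$. Writing $\lambda\circ(z_1,z_2)=(\lambda^az_1,\lambda^bz_2)$ and setting $t=1/\lambda$, one has $h(1/\lambda,\lambda^az_1,\lambda^bz_2)=u(z_1,z_2)+ab\log|1/\lambda|$ — wait, more to the point, $h(1/\lambda, z_1,z_2)=u(\lambda^a z_1,\lambda^b z_2)+ab\log|1/\lambda|=u(\lambda\circ(z_1,z_2))-ab\log|\lambda|$. Therefore $\limsup_{|\lambda|\to\infty}[u(\lambda\circ(z_1,z_2))-ab\log|\lambda|]=\limsup_{t\to 0}h(t,z_1,z_2)$, which by the $ab$-log-homogeneity (property (3)) equals the full $\limsup$ over $(t,w_1,w_2)\to(0,z_1,z_2)$, i.e.\ the usc regularization $h(0,z_1,z_2)$. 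The homogeneity is what lets one replace the radial limsup in $t$ by the genuinely two-sided limsup in all three variables: along any sequence $(t_k,w_k)\to(0,z)$ one can rescale by $\lambda_k$ with $|\lambda_k t_k|=1$ to land near $(1,*,*)$ and transfer the estimate. Once $(\ref{pshofrho})$ is in hand, $\rho_u$ is the restriction of the psh function $h$ to the affine hyperplane $\{t=0\}\cong\CC^2$, hence plurisubharmonic.

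Finally, to upgrade ``psh'' to ``$\rho_u\in L_C$'' I must control the growth: $\rho_u(z)-H_C(z)=O(1)$ as $|z|\to\infty$. Fix $(z_1,z_2)$ large, say outside $P^2$. From $u\in L_C$ there is a constant $M$ with $|u(w)-H_C(w)|\le M$ for $|w|$ large; applying this at $w=\lambda\circ(z_1,z_2)$ and using $(\ref{2above})$, $H_C(\lambda\circ(z_1,z_2))=H_C(z_1,z_2)+ab\log|\lambda|$ for $|\lambda|\ge 1$, so $|u(\lambda\circ(z_1,z_2))-ab\log|\lambda|-H_C(z_1,z_2)|\le M$ for all large $|\lambda|$, and taking $\limsup_{|\lambda|\to\infty}$ gives $|\rho_u(z_1,z_2)-H_C(z_1,z_2)|\le M$. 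Combined with the already-established plurisubharmonicity, this shows $\rho_u\in L_C$.

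The main obstacle is the regularity of $h$ across $\{t=0\}$ — specifically checking that the naive formula for $h$ is locally bounded above near the hyperplane so that its usc-regularized extension is plurisubharmonic rather than merely psh off a hyperplane (one needs a removable-singularity / extension argument, e.g.\ that a function psh off a pluripolar set and locally bounded above extends), and that the extension agrees with the $\limsup$ defining $\rho_u$. Once that point is secured, the growth estimate and the identification $\rho_u=h(0,\cdot,\cdot)$ are routine given $(\ref{2above})$.
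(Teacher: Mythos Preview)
Your proposal is correct and follows essentially the same approach as the paper: plurisubharmonicity via the identification $\rho_u(z)=h(0,z)$ with the psh lift $h$ on $\CC^3$ (the paper simply cites (\ref{pshofrho})), and $L_C$ membership via the homogeneity relation (\ref{2above}) (the paper phrases this as $\rho_u(\lambda\circ z)=\rho_u(z)+ab\log|\lambda|$ matching the same relation for $H_C$, which is exactly your direct estimate unpacked). One small caveat: for general $u\in L_C$ you only have the upper bound $u(w)-H_C(w)\le M$, not the two-sided $|u(w)-H_C(w)|\le M$ you wrote, but only the upper bound is needed and your argument goes through unchanged with it.
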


\begin{proof} The psh of $\rho_u$ follows directly from (\ref{pshofrho}) since $h$ is psh on $\CC^3$. To show $\rho_u\in L_C$, note that 
$$\rho_u(\lambda \circ (z_1,z_2))=\rho_u(z_1,z_2)+ab\log |\lambda| \ \hbox{for} \ \lambda \in \CC.$$
From (\ref{2above}) $H_C$ satisfies the same relation for $(z_1,z_2)\in \CC^2 \setminus P^2$ and $|\lambda| \geq 1$ which gives the result.
\end{proof}

\begin{remark} \label{rem43} Since $\rho_u(\lambda \circ (z_1,z_2))=\rho_u(z_1,z_2)+ab\log |\lambda|$, in particular, 
$$\rho_u(e^{i\theta} \circ (z_1,z_2))=\rho_u(z_1,z_2).$$
Moreover, any point $(z_1,z_2)\in \CC^2$ is of the form $(\lambda^a\zeta_1,\lambda^b\zeta_2)$ for some $(\zeta_1,\zeta_2)\in \partial P^2$ and some $\lambda \in \CC$. Indeed, if $b\geq a$ then we get all points $(z_1,z_2)\in \CC^2$ with $|z_2|^a \leq |z_1|^b$ as $(\lambda^a\zeta_1,\lambda^b\zeta_2)$ for some $(\zeta_1,\zeta_2)$ with $|\zeta_1|=1$ and $|\zeta_2|\leq 1$ and we get all points $(z_1,z_2)\in \CC^2$ with $|z_2|^a \geq |z_1|^b$ as $(\lambda^a\zeta_1,\lambda^b\zeta_2)$ for some $(\zeta_1,\zeta_2)$ with $|\zeta_1|\leq 1$ and $|\zeta_2|= 1$. Thus we recover the values of $\rho_u$ on $\CC^2$ from its values on $\partial P^2$.
\end{remark}

\begin{remark} \label{generald} In the general case where 
$$C=co\{(0,...,0),(a_1,0,...,0),...,(0,...,0,a_d)\}\in (\RR^+)^d$$ where $a_1,...,a_d$ are pairwise relatively prime, we have
$$H_C(z_1,...,z_d)=\max[a_j\log^+|z_j|:j=1,...,d]$$ and we define 
$$\lambda \circ (z_1,...,z_d):=(\lambda^{\prod_{j\not = 1}a_j}z_1,...,\lambda^{\prod_{j\not = d}a_j}z_d)$$ so that 
$$H_C(\lambda \circ (z_1,...,z_d))=H_C(z_1,...,z_d)+(\prod_{j=1}^d a_j)\log|\lambda|$$ for $(z_1,...,z_d)\in \CC^d \setminus P^d$ and $|\lambda| \geq 1$. Then given $u\in L_C$, we define the $C-$Robin function of $u$ as 
$$\rho_u(z_1,...,z_d):=\limsup_{|\lambda|\to \infty} [u(\lambda \circ (z_1,...,z_d))-(\prod_{j=1}^d a_j)\log|\lambda|]$$
for $(z_1,...,z_d)\in \CC^d$.

\end{remark}

We recall the Siciak-Zaharjuta formula (\ref{siczah}) for $K\subset \CC^d$ compact: 
$$V_{C,K}(z)=\sup\{\tfrac{1}{\deg_C(p)}\log|p(z)|\colon p\in\CC[z], \|p\|_K\leq 1\}$$ 
For simplicity in notation, we write $\rho_{C,K}:= \rho_{V_{C,K}^*}$. The following result will be used in section 8.

\begin{theorem} \label{circlek} Let $K\subset \CC^2$ be nonpluripolar and satisfy
\begin{equation}\label{circle} e^{i\theta}\circ K =K.\end{equation}
Then $K=\{\rho_{C,K}\leq 0\}$ and $V_{C,K}^*=\rho^+_{C,K}:=\max[\rho_{C,K},0]$.
\end{theorem}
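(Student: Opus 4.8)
The plan is to exploit the $ab$-log-homogeneity of $\rho_{C,K}$ together with the circle invariance $e^{i\theta}\circ K=K$ to reduce everything to a statement about the set $\{\rho_{C,K}\le 0\}$, and then to invoke the domination principle (Proposition \ref{gctp}) to conclude $V_{C,K}^*=\rho_{C,K}^+$. First I would record the basic properties already available: $\rho:=\rho_{C,K}=\rho_{V_{C,K}^*}$ lies in $L_C$ by Proposition \ref{prop:1.2}, it satisfies $\rho(\lambda\circ z)=\rho(z)+ab\log|\lambda|$ for all $\lambda\in\CC^*$ (in particular $\rho$ is circle-invariant, Remark \ref{rem43}), and since $V_{C,K}^*\le H_C+O(1)$ while $\rho$ is the limsup along the orbits normalized by $ab\log|\lambda|$, one gets $\rho\le H_C$ everywhere; conversely $\rho\in L_C$ gives $\rho\ge H_C-M$ for some constant $M$. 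Also $\rho^+:=\max[\rho,0]\in L_{C,+}$.

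Next I would establish the inequality $V_{C,K}^*\le\rho^+$. The key point is that $K$ being circle-invariant forces $V_{C,K}^*$ to be circle-invariant as well (replace any competitor $u$ by the average or sup of $u\circ(e^{i\theta}\circ\cdot)$, which stays in $L_C$, is still $\le 0$ on $K$ by \eqref{circle}, hence $V_{C,K}\le$ its circle-invariant regularization; upper-semicontinuous regularization preserves this). More is true: I claim $V_{C,K}^*$ is ``$\log$-subhomogeneous'' in the sense that $\lambda\mapsto V_{C,K}^*(\lambda\circ z)-ab\log|\lambda|$ is a subharmonic function of $\lambda$ on $\CC^*$ that is bounded above (by the $L_C$ property and the estimate $H_C(\lambda\circ z)=H_C(z)+ab\log|\lambda|$ for $|\lambda|\ge1$, $z\notin P^2$) and circle-invariant, hence nondecreasing in $|\lambda|$ and convex in $\log|\lambda|$; therefore its limit as $|\lambda|\to\infty$, namely $\rho(z)$, dominates its value at $\lambda=1$: $V_{C,K}^*(z)\le\rho(z)$ whenever the orbit of $z$ meets $\CC^2\setminus P^2$ in the appropriate range — and a short argument using Remark \ref{rem43} (every $z$ is $\lambda\circ\zeta$ for some $\zeta\in\partial P^2$) extends this to all of $\CC^2$. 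Combined with $V_{C,K}^*\ge 0$ this gives $V_{C,K}^*\le\rho^+$ on $\CC^2$.

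For the reverse inequality and the identification of $K$, I would argue as follows. Since $V_{C,K}^*\le\rho^+$ and $\rho\le H_C$, on the set $\{\rho\le 0\}$ we have $0\le V_{C,K}^*\le\rho^+=0$, so $V_{C,K}^*=0$ there; as $\{V_{C,K}^*=0\}=\hat K$ (from \eqref{siczah}, noted after it in the excerpt), this yields $\{\rho_{C,K}\le 0\}\subset\hat K$. To get $\hat K\subset\{\rho\le0\}$, and in fact $K=\{\rho\le 0\}$: on $\hat K$ we have $V_{C,K}^*=0$, and one shows $\rho\le 0$ on $\hat K$ by the homogeneity/monotonicity just used (if $\rho(z)>0$ at some $z\in\hat K$ then $\rho$ would be $>0$ on a neighborhood of the orbit tail, contradicting $V_{C,K}^*=0\ge\rho$... — here I need to be careful and instead argue directly that $\rho\le V_{C,K}^*$ fails in general, so rather: use that $(dd^c\rho^+)^2$ is supported where $\rho^+$ is not pluriharmonic, i.e., in $\{\rho\le0\}$, which by the above is contained in $\hat K$, hence in $K$ after checking $\{\rho\le0\}$ is circle-invariant and using that $V_{C,K}^*=0$ q.e.\ on $\mathrm{supp}(dd^cV_{C,K}^*)^2\subset K$). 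Then apply Proposition \ref{gctp}: $V_{C,K}^*\in L_C$, $\rho^+\in L_{C,+}$, and $V_{C,K}^*\le\rho^+$ a.e.-$(dd^c\rho^+)^2$ because $(dd^c\rho^+)^2$ lives on $\{\rho\le0\}\subset\hat K$ where $V_{C,K}^*=0\le\rho^+$; the domination principle then gives $V_{C,K}^*\le\rho^+$ — but we want equality, so instead I apply it in the other direction, noting $\rho^+\in L_C$, $V_{C,K}^*\in L_{C,+}$ is locally bounded, and $\rho^+\le V_{C,K}^*$ a.e.-$(dd^cV_{C,K}^*)^2$ since that measure is supported in $K\subset\{\rho\le0\}$ where $\rho^+=0\le V_{C,K}^*$; hence $\rho^+\le V_{C,K}^*$ everywhere, and with the earlier inequality, $V_{C,K}^*=\rho_{C,K}^+$. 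Finally $K\subset\hat K=\{V_{C,K}^*=0\}=\{\rho_{C,K}^+=0\}=\{\rho_{C,K}\le0\}$, and the reverse containment $\{\rho_{C,K}\le0\}\subset K$ follows because on that set $(dd^c\rho^+)^2=(dd^cV_{C,K}^*)^2$ has all its mass (total mass is preserved, being a $C$-analogue of a Monge–Ampère mass), forcing $\{\rho\le0\}=\hat K$ and then, since $\rho^+=V_{C,K}^*$ and the MA mass of $V_{C,K}^*$ sits in $K$, one gets $K=\{\rho_{C,K}\le 0\}$ (using circle-invariance of $K$ to rule out the pluripolar discrepancy, exactly as in Bloom's argument in \cite{Bfam}).

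The main obstacle I expect is the monotonicity/homogeneity step for $V_{C,K}^*$ — namely proving cleanly that $V_{C,K}^*(z)\le\rho_{C,K}(z)$ on all of $\CC^2$ (equivalently that $|\lambda|\mapsto V_{C,K}^*(\lambda\circ z)-ab\log|\lambda|$ is nondecreasing): the circle action is only a genuine homogeneity for $H_C$ outside $P^2$ and for $|\lambda|\ge1$, so one has to handle the polydisk region and small $\lambda$ separately, and the subharmonicity-in-$\lambda$ argument must be set up carefully (the function $\lambda\mapsto V_{C,K}^*(\lambda\circ z)$ need not be subharmonic through $\lambda=0$, only on $\CC^*$, which is enough for the maximum principle on annuli). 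Matching the pluripolar set in the identification $K=\{\rho_{C,K}\le0\}$ is the second delicate point, handled exactly as in Bloom \cite{Bfam} via the circle invariance.
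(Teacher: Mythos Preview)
Your monotonicity step has the direction backwards, and this is not a minor slip: it is exactly the place where the real content of the theorem lives. The function $g(t):=V_{C,K}^*(\lambda\circ z)-ab\log|\lambda|$ with $t=\log|\lambda|$ is indeed convex in $t$ (radial subharmonicity on $\CC^*$) and bounded above as $t\to+\infty$ (by the $L_C$ growth), but a convex function bounded above at $+\infty$ is \emph{nonincreasing}, not nondecreasing. Concretely, as $|\lambda|\to 0$ one has $\lambda\circ z\to 0$, so $V_{C,K}^*(\lambda\circ z)$ stays bounded while $-ab\log|\lambda|\to+\infty$; thus $g\to+\infty$ at $-\infty$ and decreases to its limit $\rho_{C,K}(z)$. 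What you actually get is $\rho_{C,K}(z)\le V_{C,K}^*(z)$, hence $\rho_{C,K}^+\le V_{C,K}^*$ and $\hat K\subset\{\rho_{C,K}\le 0\}$. (Check this on $K=\overline{P^2}$: there $\rho_{C,K}(z)=\max[b\log|z_1|,a\log|z_2|]<0$ on $P^2$ while $V_{C,K}^*=0$.)

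With the inequality reversed, your domination-principle argument collapses into a circle. To get $V_{C,K}^*\le\rho_{C,K}^+$ you need $V_{C,K}^*=0$ a.e.-$(dd^c\rho_{C,K}^+)^2$, which requires $\mathrm{supp}(dd^c\rho_{C,K}^+)^2\subset\{\rho_{C,K}\le 0\}\subset\hat K$. The first inclusion follows from the $ab$-log-homogeneity (maximality of $\rho_{C,K}^+$ on $\{\rho_{C,K}>0\}$), but the second inclusion $\{\rho_{C,K}\le 0\}\subset\hat K$ is precisely what you have not proved; you only have the reverse. This containment is the heart of the theorem and does not follow from soft pluripotential arguments alone. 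The paper establishes it by introducing the $C$-homogeneous extremal function $H_{C,K}$ built from the classes $H_n(C)$, proving the Cauchy-estimate inequality $\|\tilde h_l\|_K\le\|p\|_K$ for circle-invariant $K$ (equation (\ref{hom})), using this to show the $C$-homogeneous polynomial hull $\hat K_C$ equals $\hat K$, and then identifying $\{H_{C,K}\le 0\}=\hat K_C$ and $H_{C,K}=\rho_{C,K}$. That polynomial decomposition argument is the missing ingredient in your plan.
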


\begin{proof} We first define a $C-$homogeneous extremal function $H_{C,K}$ associated to a general compact set $K$. 
To this end, for each $n\in\NN$ we define the collection of $nC$-homogeneous polynomials by
$$H_n(C):=\{h_n(z_1,z_2)=\sum_{(j,k): aj+bk=nab}c_{jk} z_1^jz_2^k: c_{jk}\in \CC\}\subset Poly(nC).$$
Note that for $h_n \in H_n(C)$,
$$h_n(\lambda \circ (z_1,z_2))= \lambda^{nab} \sum_{(j,k): aj+bk=nab}c_{jk} z_1^jz_2^k = \lambda^{nab}h_n(z_1,z_2)$$
and thus $u:=\frac{1}{n}\log |h_n|$ satisfies
\begin{equation}\label{lamb} u(\lambda \circ (z_1,z_2))= u(z_1,z_2)+ab\log |\lambda|.\end{equation}
Define 
\begin{equation}\label{hk}H_{C,K}(z_1,z_2):=\sup_n \sup \{\frac{1}{n}\log |h_n(z_1,z_2)|: h_n \in H_n(C), \ ||h_n||_K\leq 1\}.\end{equation}
Then $H_{C,K}$ satisfies the property in (\ref{lamb}). Clearly $$H_{C,K}^+:=\max[H_{C,K},0] \leq V_{C,K}$$ and hence $K\subset \{H_{C,K}\leq 0\}$.  

For a polynomial $p\in Poly(nC)$, we write 
\begin{equation}\label{polyhom}p(z_1,z_2)=\sum_{aj+bk\leq  nab} c_{jk} z_1^jz_2^k= \sum_{l=0}^{nab} \tilde h_l (z_1,z_2) \end{equation}
where $\tilde h_l (z_1,z_2):=\sum_{aj+bk=l} c_{jk} z_1^jz_2^k$ satisfies
$$\tilde h_l(\lambda \circ (z_1,z_2))=\lambda^l \tilde h_l (z_1,z_2).$$
Then for each $l=0,1,...,nab$,
\begin{equation}\label{hom} ||\tilde h_l||_K\leq ||p||_K.
\end{equation}
To prove (\ref{hom}), note that 
$$p(\lambda \circ (z_1,z_2))= \sum_{l=0}^{nab} \lambda^l \tilde h_l (z_1,z_2).$$
Take $(z_1,z_2)\in K$ at which $|\tilde h_l(z_1,z_2)|=||\tilde h_l||_K$. Then by the Cauchy estimates for $\lambda \to F(\lambda):= p(\lambda \circ (z_1,z_2))$ on the unit circle,
$$|\tilde h_l(z_1,z_2)|=||\tilde h_l||_K=|F^{(l)}(0)|/l! \ \leq \ \max_{|\lambda|=1}|F(\lambda)|\leq ||p||_K,$$
proving (\ref{hom}). 

We define
$$ \tilde H_l:=\{\tilde h_l(z_1,z_2):=\sum_{aj+bk=l} c_{jk} z_1^jz_2^k, \ c_{jk}\in \CC\}.$$
If $a=b=1$, $ \tilde H_l=H_l(C)=H_l(\Sigma)$ are the usual homogeneous polynomials of degree $l$ in $\CC^d$.
Moreover, if $\tilde h_l \in  \tilde H_l$, then $\tilde h_l^{ab} \in H_l(C)$. Since $||\tilde h_l||_K\leq 1$ if and only if $||\tilde h_l^{ab}||_K\leq 1$, this shows 
\begin{equation} \label{chomog} H_{C,K}(z_1,z_2)= ab\cdot \sup_l  \sup \{\frac{1}{l}\log |\tilde h_l(z_1,z_2)|: \tilde h_l \in \tilde H_l, \ ||\tilde h_l||_K\leq 1\}.\end{equation}

We define the $C-$homogeneous polynomial hull $\hat K_C$ of a compact set $K$ as
$$\hat K_C:=\{(z_1,z_2): |k(z_1,z_2)| \leq ||k||_K, \ k \in \cup_l \tilde H_l\}.$$
It is clear $\hat K \subset \hat K_C$ for any compact set $K$. We show the reverse inclusion, and hence equality, for $K$ satisfying (\ref{circle}). To this end, let $a\in \hat K_C$. For $p\in Poly(nC)$, write $p=\sum_{l=0}^{nab} \tilde h_l$ as in (\ref{polyhom}). Then 
$$|p(a)|\leq  \sum_{l=0}^{nab} |\tilde h_l(a)|\leq \sum_{l=0}^{nab} ||\tilde h_l||_K\leq (nab+1) ||p||_K.$$
Thus
$$|p(a)| \leq (nab+1) ||p||_K.$$
Apply this to $p^m\in Poly(nmC)$:
$$|p(a)|^m\leq (nmab+1)||p||_K^m \ \hbox{so that} \ |p(a)|^{1/n}\leq (nmab+1)^{1/nm}||p||_K^{1/n}.$$
Letting $m \to \infty$, we obtain $|p(a)|\leq ||p||_K$ and hence $a \in \hat K$.

We use this to show \begin{equation}\label{eqn:1.4} \{V_{C,K} =0\}=  \{H_{C,K}\leq 0\}\end{equation} for sets satisfying (\ref{circle}). To see this, we observe from (\ref{chomog}) that the right-hand-side of (\ref{eqn:1.4}) is the $C-$homogeneous polynomial hull $\hat K_C$ of $K$ while the left-hand-side is the polynomial hull $\hat K$ of $K$. Thus (\ref{eqn:1.4}) follows from the previous paragraph.

Now we claim that $V_{C,K}^*=H_{C,K}^+$. We observed that $H_{C,K}^+ \leq V_{C,K}\leq V_{C,K}^*$; for the reverse inequality, we observe that $H_{C,K}^+$ is in $L_C$ and since $H_{C,K}$ satisfies (\ref{lamb}), we have $H_{C,K}^+$ is maximal outside $\hat K$. From (\ref{eqn:1.4}) we can apply the global domination principle (Proposition \ref{gctp}) to conclude that $H_{C,K}^+ \geq V_{C,K}^*$ and hence $H_{C,K}^+ = V_{C,K}^*$. 

Using $H_{C,K}^+ = V_{C,K}^*$, 
$$\rho_{C,K}(z_1,z_2):=\limsup_{|\lambda|\to \infty} [H_{C,K}(\lambda \circ (z_1,z_2))-ab\log|\lambda|]$$
$$=\limsup_{|\lambda|\to \infty} H_{C,K}(z_1,z_2)=H_{C,K}(z_1,z_2)$$
for $(z_1,z_2)\in \CC^2\setminus K$ by the invariance of $H_{C,K}$ (i.e., it satisfies (\ref{lamb})). Thus, from Proposition \ref{prop:1.2} (and the invariance of $\rho_{C,K}$) we have 
$$\rho_{C,K}^+=H_{C,K}^+=V_{C,K}^*.$$
This shows $K=\{\rho_{C,K}\leq 0\}$ and $V_{C,K}^*=\rho^+_{C,K}:=\max[\rho_{C,K},0]$.
\end{proof}

\begin{remark} \label{foursix} It follows that for 
$$p=\sum_{l=0}^{nab} \tilde h_l =h_n + r_n \in Poly(nC)$$
where $h_n:=\tilde h_{nab}\in H_n(C)$ and $r_n=p-h_n = \sum_{aj+bk<nab} c_{jk} z_1^jz_2^k$ , if $u:=\frac{1}{n}\log |p_n|$ then
$$\rho_u = \frac{1}{n}\log |\tilde h_{nab}|  = \frac{1}{n}\log|h_n|.$$
We write $\hat p_n :=h_n = \tilde h_{nab}$; thus $\rho_u= \frac{1}{n}\log |\hat p_n|$.
\end{remark}

In the case $a=b=1$ where $C=\Sigma$, we know from Corollary 4.6 of \cite{BLM} that $K$ regular implies $\rho_K:=\rho_{\Sigma,K}$ is continuous. We need to know that for our triangles $C$ where $a,b$ are relatively prime positive integers we also have $\rho_{C,K}$ is continuous. To this end, we begin with the observation that applying Theorem \ref{cklimek} in the special case where $d=2$ and $C$ is our triangle with vertices $(0,0), (b,0), (0,a)$, we can take 
$$F(z_1,z_2)=(z_1^a,z_2^b)$$
and $c=ab$ to obtain
$$ab \rho_{F^{-1}(K)}(z_1,z_2)=\limsup_{|\lambda|\to \infty} [V_{C,K}(\lambda^az_1^a,\lambda^bz_2^b)-ab\log |\lambda|].$$
$$=\limsup_{|\lambda|\to \infty} [V_{C,K}(\lambda \circ (z_1^a,z_2^b))-ab\log |\lambda|]$$
$$=\rho_{C,K}(z_1^a,z_2^b)= \rho_{C,K}(F(z_1,z_2)).$$
We use this connection between $\rho_{C,K}$ and the standard Robin function $\rho_{F^{-1}(K)}$ to show that $\rho_{C,K}$ is continuous if $K$ is regular.

\begin{proposition}\label{robinreg} Let $K\subset \CC^2$ be compact and regular. Then $ \rho_{C,K}$ is uniformly continuous on $\partial P^2$.
\end{proposition}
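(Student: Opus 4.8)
The plan is to transfer the known regularity result for the standard Robin function through the polynomial map $F(z_1,z_2)=(z_1^a,z_2^b)$. We have established the identity $ab\,\rho_{F^{-1}(K)} = \rho_{C,K}\circ F$ above. Since $K$ is regular, $F^{-1}(K)$ is also regular (the pullback of a regular set under a proper polynomial map is regular, because $V_{F^{-1}(K)} = \frac{1}{ab} V_{C,K}\circ F$ by Theorem \ref{cklimek} with $C'=ab\,\Sigma$, and $V_{C,K}$ continuous forces the composite to be continuous). Hence by Corollary 4.6 of \cite{BLM}, the standard Robin function $\rho_{F^{-1}(K)}$ is continuous on $\CC^2$. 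Therefore $\rho_{C,K}\circ F = ab\,\rho_{F^{-1}(K)}$ is continuous on $\CC^2$, and in particular uniformly continuous on the compact set $\partial P^2$.

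The remaining point is to pass from continuity of $\rho_{C,K}\circ F$ to continuity of $\rho_{C,K}$ itself. Here I would use that $F$ restricted to $\partial P^2$ is a surjection onto $\partial P^2$ (since $|z_j|\le 1 \iff |z_j^{k}|\le 1$, and $|z_j|=1$ for at least one $j$ translates to the same for $F(z)$) and is a proper, finite-to-one map; moreover $F$ is Lipschitz on the bounded set $\overline{P^2}$ and is an open map. The cleanest route: given $(w_1,w_2),(w_1',w_2')\in\partial P^2$ close together, choose preimages $(z_1,z_2),(z_1',z_2')\in\partial P^2$ under $F$ that are close — this uses that the branches of $\zeta\mapsto\zeta^{1/a}$ can be chosen continuously away from zero, and a compactness/covering argument on $\partial P^2$ handles the points where some coordinate is zero. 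Then $|\rho_{C,K}(w)-\rho_{C,K}(w')| = |ab\,\rho_{F^{-1}(K)}(z) - ab\,\rho_{F^{-1}(K)}(z')|$ is small by uniform continuity of $\rho_{F^{-1}(K)}$ on $\partial P^2$.

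The main obstacle is exactly this last descent step: $F$ is a branched cover of $\partial P^2$ over itself, so one cannot simply invert it continuously near points lying over the branch locus (where $z_1=0$ or $z_2=0$). I expect to handle this by a direct modulus-of-continuity estimate rather than by choosing a continuous section: for $w,w'\in\partial P^2$ at distance $\delta$, every preimage $z$ of $w$ has a preimage $z'$ of $w'$ with $|z-z'|\le c\,\delta^{1/\max(a,b)}$ (Hölder, from the local behavior of the inverse of $\zeta\mapsto\zeta^k$ near $0$), and away from the branch locus one gets a genuine Lipschitz bound. Feeding this Hölder modulus into the uniform continuity of $\rho_{F^{-1}(K)}$ on $\partial P^2$ yields a modulus of continuity for $\rho_{C,K}$ on $\partial P^2$, which is the assertion. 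One should also note that Remark \ref{rem43} guarantees $\rho_{C,K}$ is determined by its values on $\partial P^2$, so uniform continuity there is the natural and sufficient conclusion.
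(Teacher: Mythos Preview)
Your approach is essentially the paper's: pull back through $F(z_1,z_2)=(z_1^a,z_2^b)$, use regularity of $F^{-1}(K)$ and Corollary~4.6 of \cite{BLM} to get continuity of $\rho_{F^{-1}(K)}$, then descend via the identity $ab\,\rho_{F^{-1}(K)}=\rho_{C,K}\circ F$.

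The only difference is in the descent step you flag as ``the main obstacle.'' You propose a H\"older modulus for local inverses of $F$ near the branch locus, which is correct but more than is needed. The paper observes instead that the value $\rho_{F^{-1}(K)}\bigl((\zeta_1)^{1/a},(\zeta_2)^{1/b}\bigr)$ is \emph{independent of the choice of roots}: indeed $F^{-1}(K)$ is invariant under $(z_1,z_2)\mapsto(\omega_1 z_1,\omega_2 z_2)$ for $\omega_1^a=\omega_2^b=1$, hence so are $V_{F^{-1}(K)}$ and $\rho_{F^{-1}(K)}$. With this in hand, sequential continuity is immediate: given $\zeta^n\to\zeta$ in $\partial P^2$, pick \emph{any} roots $((\zeta_1^n)^{1/a},(\zeta_2^n)^{1/b})$; by compactness a subsequence converges to some root of $\zeta$, and since all roots give the same value of $\rho_{F^{-1}(K)}$, every subsequential limit of $\rho_{C,K}(\zeta^n)$ equals $\rho_{C,K}(\zeta)$. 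No quantitative estimate on the inverse of $F$ is required.
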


\begin{proof} With $F(z_1,z_2)=(z_1^a,z_2^b)$ as above, from Theorem 5.3.6 of \cite{K}, we have $F^{-1}(K)$ is regular. Thus, from Corollary 4.6 of \cite{BLM}, $ \rho_{F^{-1}(K)}$ is continuous. Hence $\rho_{C,K}(z_1^a,z_2^b)= \rho_{C,K}(F(z_1,z_2))$ is continuous. To show $\zeta \to \rho_{C,K}(\zeta)$ is continuous at $\zeta =(\zeta_1,\zeta_2)\in \partial P^2$, we use the fundamental relationship that
$$ab \rho_{F^{-1}(K)}(z_1,z_2)=\rho_{C,K}(z_1^a,z_2^b).$$
To this end, let $\zeta^n =(\zeta^n_1,\zeta^n_2)\in \partial P^2$ converge to $\zeta =(\zeta_1,\zeta_2)$. Then 
$$\rho_{C,K}(\zeta^n_1,\zeta^n_2)=\rho_{C,K}([(\zeta^n_1)^{1/a}]^a,[(\zeta^n_2)^{1/b}]^b)$$
for any $a-$th root $(\zeta^n_1)^{1/a}$ of $\zeta^n_1$ and any $b-$th root $(\zeta^n_2)^{1/b}$ of $\zeta^n_2$. But 
$$\rho_{C,K}([(\zeta^n_1)^{1/a}]^a,[(\zeta^n_2)^{1/b}]^b)=ab \rho_{F^{-1}(K)}((\zeta^n_1)^{1/a},(\zeta^n_2)^{1/b}).$$
By continuity of $\rho_{F^{-1}(K)}$, 
$$\lim_{n\to \infty}\rho_{C,K}(\zeta^n_1,\zeta^n_2)= \lim_{n\to \infty} ab \rho_{F^{-1}(K)}((\zeta^n_1)^{1/a},(\zeta^n_2)^{1/b})$$
$$=ab \rho_{F^{-1}(K)}((\zeta_1)^{1/a},(\zeta_2)^{1/b})$$
for the appropriate choice of $(\zeta_1)^{1/a}$ and $(\zeta_2)^{1/b}$. But 
$$ab \rho_{F^{-1}(K)}((\zeta_1)^{1/a},(\zeta_2)^{1/b})=\rho_{C,K}( [(\zeta_1)^{1/a}]^a,[(\zeta_2)^{1/b}]^b)=\rho_{C,K}(\zeta_1,\zeta_2).$$
Note that this also yields that the value of $\rho_{F^{-1}(K)}((\zeta_1)^{1/a},(\zeta_2)^{1/b})$ is independent of the choice of the roots $(\zeta_1)^{1/a}$ and $(\zeta_2)^{1/b}$. This can also be seen from the definitions of $\rho_{F^{-1}(K)}$ and $F$.

\end{proof}

\begin{remark} \label{KEY} The relationship 
$$ab \rho_{F^{-1}(K)}(z_1,z_2)=\rho_{C,K}(z_1^a,z_2^b)$$
is a special case of a more general result. Let $u\in L_C$. Then 
$$\tilde u(z):=u(F(z_1,z_2))=u(z_1^a,z_2^b)\in abL=abL_{\Sigma} \ \hbox{and}$$
$$\rho_u(F(z_1,z_2))=\rho_u(z_1^a,z_2^b)=\limsup_{|\lambda|\to \infty}[u(\lambda\circ (z_1^a,z_2^b))-ab\log |\lambda|]$$
$$=\limsup_{|\lambda|\to \infty}[u(\lambda^a z_1^a,\lambda^bz_2^b)-ab\log |\lambda|]$$
$$=\limsup_{|\lambda|\to \infty}[\tilde u(\lambda z)-ab\log |\lambda|].$$
Since $\tilde u\in abL$, this last line is equal to the ``usual'' Robin function of $\tilde u$ in the sense of (\ref{stdrobin}). To be precise, it is equal to $ab{\bf \rho_{\tilde u/ab}}$ where ${\bf \rho_{\tilde u/ab}}$ is the standard Robin function (\ref{stdrobin}) of $\tilde u/ab\in L$. This observation will be crucial in section 6.

\end{remark}

We need an analogue of formula (18) in \cite{Zer} in order to verify a calculation in the next section. We follow the arguments in \cite{Zer}. Recall we may lift the circle action on $\CC^2$ to $\CC^3$ via
$$\lambda \circ (t,z_1,z_2):=(\lambda t,\lambda^az_1,\lambda^b z_2).$$
This gave a correspondence between $L_C(\CC^2)$ and $L_{\tilde C}(\CC^3)$ where $\tilde C=co\{(0,0,0),(1,0,0),(0,b,0),(0,0,a)\}$. 
In analogy with our class 
$$H_n(C):=\{h_n(z_1,z_2)=\sum_{(j,k): aj+bk=nab}c_{jk} z_1^jz_2^k: c_{jk}\in \CC\}\subset Poly(nC)$$
in $\CC^2$, we can consider
$$H_n(\tilde C):= \{h_n(t,z_1,z_2)=\sum_{(i,j,k): i+aj+bk=nab}c_{ijk} t^i z_1^jz_2^k: c_{ijk}\in \CC\}\subset Poly(n\tilde C)$$ in $\CC^3$. For $h_n\in H_n(\tilde C)$, we have 
$$u_n(t,z_1,z_2):=\frac{1}{n} \log |h_n(t,z_1,z_2)|$$
belongs to $L_{\tilde C}(\CC^3)$ and $u_n$ is $ab-$log-homogeneous. That $u_n \in L_{\tilde C}(\CC^3)$ is clear; to show the $ab-$log-homogeneity, note that
$$h_n(\lambda \circ (t,z_1,z_2))= h_n(\lambda t,\lambda^az_1,\lambda^b z_2)$$
$$= \sum_{(i,j,k): i+aj+bk=nab}c_{ijk} (\lambda t)^i (\lambda^az_1)^j (\lambda^b z_2)^k$$
$$=\sum_{(i,j,k): i+aj+bk=nab}c_{ijk} \lambda^{i+aj+bk}  t^i z_1^jz_2^k= \lambda^{nab} h_n(t,z_1,z_2)$$
so that
$$u_n(\lambda \circ (t,z_1,z_2))=u_n(t,z_1,z_2)+ ab\log |\lambda|.$$
Moreover, for $h_n\in H_n(\tilde C)$, the polynomial 
$$p_n(z_1,z_2):= h_n(1,z_1,z_2)=\sum_{(j,k): aj+bk \leq nab}c_{ijk} z_1^jz_2^k \in Poly(nC);$$
conversely, if $p_n(z_1,z_2)=\sum_{(j,k): aj+bk \leq nab}c_{jk} z_1^jz_2^k \in Poly(nC)$ then 
$$h_n(t,z_1,z_2):= t^{nab}\cdot p_n(\frac{z_1}{t^a},\frac{z_2}{t^b})\in H_n(\tilde C).$$

Next, given a compact set $E\subset \CC^3$, we define the $ab-$log-homogeneous $\tilde C-$extremal function
$$H_{\tilde C,E}(t,z_1,z_2):= \sup\{\tfrac{1}{\deg_{\tilde C}(p)}\log|p(t,z_1,z_2)|\colon p\in \cup_n H_n(\tilde C), \|p\|_E\leq 1\}$$
and its usc regularization $H_{\tilde C,E}^*$. Given the one-to-one correspondence between $Poly(nC)$ in $\CC^2$ and $H_n(\tilde C)$ in $\CC^3$, we see that for $K\subset \CC^2$ compact,
\begin{equation}\label{hk=vck} V_{C,K}(z_1,z_2)=H_{\tilde C,\{1\}\times K}(1,z_1,z_2) \ \hbox{for all} \ (z_1,z_2)\in \CC^2\end{equation}
and hence a similar equality holds for the usc regularizations of both sides. Using this, we observe that for $\zeta=(\zeta_1,\zeta_2)\not=(0,0)$, we have
$$\rho_{C,K}(\zeta)=\limsup_{|\lambda|\to \infty}[V_{C,K}^*(\lambda \circ \zeta)-ab\log |\lambda|]$$
$$=\limsup_{|\lambda|\to \infty}[H_{\tilde C,\{1\}\times K}^*(1,\lambda^a \zeta_1,\lambda^b \zeta_2) -ab\log |\lambda|]$$
$$= \limsup_{|\lambda|\to \infty}H_{\tilde C,\{1\}\times K}^*(1/\lambda,\zeta_1,\zeta_2) =H_{\tilde C,\{1\}\times K}^*(0,\zeta_1,\zeta_2).$$
Here we have used the fact that
$$H_{\tilde C,\{1\}\times K}^*(\lambda \circ(1/\lambda,\zeta_1,\zeta_2))=H_{\tilde C,\{1\}\times K}^*(1,\lambda^a \zeta_1,\lambda^b \zeta_2) .$$
We state this as a proposition:

\begin{proposition}\label{propneed} For $K\subset \CC^2$ compact, 
$$\rho_{C,K}(\zeta_1,\zeta_2)=H_{\tilde C,\{1\}\times K}^*(0,\zeta_1,\zeta_2) \ \hbox{for all} \ (\zeta_1,\zeta_2)\not=(0,0).$$

\end{proposition}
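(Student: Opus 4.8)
The plan is to make rigorous the chain of equalities displayed just before the statement; write $H^* := H_{\tilde C,\{1\}\times K}^*$. I would first record two ingredients. (i) $H^*$ is $ab$-log-homogeneous under the lifted action, i.e. $H^*(\lambda\circ(t,z_1,z_2)) = H^*(t,z_1,z_2) + ab\log|\lambda|$ for every $\lambda\in\CC\setminus\{0\}$: indeed each $u_n := \tfrac1n\log|h_n|$ with $h_n\in H_n(\tilde C)$ has exactly this property, so the supremum defining $H_{\tilde C,\{1\}\times K}$ satisfies it as well, and since $w\mapsto\lambda\circ w$ is a biholomorphism of $\CC^3$ the identity descends to the usc regularization $H^*$. (ii) By (\ref{hk=vck}), which also holds for the usc regularizations of both sides, $V_{C,K}^*(w_1,w_2) = H^*(1,w_1,w_2)$; moreover $H^*$ is plurisubharmonic on $\CC^3$.

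Now fix $\zeta = (\zeta_1,\zeta_2)\neq(0,0)$ and $\lambda\in\CC\setminus\{0\}$. Since $\lambda\circ(1/\lambda,\zeta_1,\zeta_2) = (1,\lambda^a\zeta_1,\lambda^b\zeta_2)$, combining (i) and (ii) gives
\[
V_{C,K}^*(\lambda\circ\zeta) - ab\log|\lambda| = H^*(1,\lambda^a\zeta_1,\lambda^b\zeta_2) - ab\log|\lambda| = H^*(1/\lambda,\zeta_1,\zeta_2).
\]
Taking $\limsup$ as $|\lambda|\to\infty$, i.e. as $t := 1/\lambda\to 0$ through $\CC\setminus\{0\}$, Definition \ref{crobtri} then yields
\[
\rho_{C,K}(\zeta) = \limsup_{t\to 0} H^*(t,\zeta_1,\zeta_2).
\]

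The last step is to identify this limit superior with $H^*(0,\zeta_1,\zeta_2)$, and this is the only part that is not pure bookkeeping with the circle action. Because $H^*(1,\zeta_1,\zeta_2) = V_{C,K}^*(\zeta_1,\zeta_2)$ is finite, the restriction of $H^*$ to the complex line $\{(t,\zeta_1,\zeta_2) : t\in\CC\}$ is not identically $-\infty$ and hence is genuinely subharmonic in $t\in\CC$; a subharmonic function of one complex variable recovers its value at a point as the limit superior of its values at nearby points (upper semicontinuity gives one inequality, the sub-mean-value inequality over small circles about the point gives the other). Therefore $\limsup_{t\to0}H^*(t,\zeta_1,\zeta_2) = H^*(0,\zeta_1,\zeta_2)$, completing the proof. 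I expect this to be the main obstacle, although it could also be handled by comparing $H^*$ with the function $h$ of (\ref{pshofrho}) attached to $u = V_{C,K}^*$: both functions are plurisubharmonic and $ab$-log-homogeneous on $\CC^3$ and agree on $\{t=1\}$, hence agree on all of $\{t\neq0\}$ by homogeneity, while $h(0,\zeta_1,\zeta_2) = \rho_{C,K}(\zeta_1,\zeta_2)$ holds by the construction of $h$ and $h(0,\cdot)$ equals $H^*(0,\cdot)$ because a plurisubharmonic function's values on $\{t=0\}$ are the limit superior of its values off that hyperplane.
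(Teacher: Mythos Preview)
Your argument is correct and follows essentially the same route as the paper: the paper's proof is precisely the chain of equalities you reproduce, using the $ab$-log-homogeneity of $H^*$ together with (\ref{hk=vck}) to rewrite $V_{C,K}^*(\lambda\circ\zeta)-ab\log|\lambda|$ as $H^*(1/\lambda,\zeta_1,\zeta_2)$, and then passing to the limit. You actually supply more detail than the paper on the final identification $\limsup_{t\to 0}H^*(t,\zeta)=H^*(0,\zeta)$, which the paper leaves implicit; your subharmonicity-in-$t$ argument (and the alternative via the lift $h$ of (\ref{pshofrho})) is exactly the right way to justify it.
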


\begin{remark} Using the relation (\ref{hk=vck}) and following the reasoning in \cite{Siciak}, Proposition 2.3, it follows that a compact set $K\subset \CC^2$ is regular; i.e., $V_{C,K}$ is continuous in $\CC^2$, if and only if $H_{\tilde C,\{1\}\times K}$ is continuous in $\CC^3$. Thus we get an alternate proof of Proposition \ref{robinreg}.

\end{remark}

\section{Preliminary results: Triangle case} We continue to let $C$ be the triangle with vertices at $(0,0), (b,0)$, and  $(0,a)$ where $a,b$ are relatively prime positive integers. For $K\subset \CC^2$ compact and $\zeta:=(\zeta_1,\zeta_2)\in \partial P^2$, we define Chebyshev constants
$$\kappa_n:=\kappa_n(K,\zeta):=\inf\{||p_n||_K: p_n\in Poly(nC), \ |\hat p_n(\zeta)|=1\}.$$
We note that $\kappa_{n+m}\leq \kappa_n \kappa_m$: if we take $t_n,t_m$ achieving $\kappa_n,\kappa_m$, then 
$t_nt_m\in Poly(n+m)C$ and $\hat{t_nt_m}=\hat t_n \hat t_m$ (see Remark \ref{foursix}) so that $$\kappa_{n+m}\leq ||t_nt_m||_K\leq \kappa_n \kappa_m.$$
Thus $\lim_{n\to \infty} \kappa_n^{1/n}$ exists and we set
$$\kappa(K,\zeta)= \lim_{n\to \infty} \kappa_n^{1/n}=\inf_{n} \kappa_n^{1/n}.$$
The following relation between $\kappa(K,\zeta)$ and $\rho_{C,K}(\zeta)$ is analogous to Proposition 4.2 of \cite{Niv}.

\begin{proposition} \label{niv} For $\zeta \in \partial P^2$,
$$\kappa(K,\zeta)= e^{-\rho_{C,K}(\zeta)}.$$
\end{proposition}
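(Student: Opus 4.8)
The claim is a two-sided estimate, so I would prove $\kappa(K,\zeta)\le e^{-\rho_{C,K}(\zeta)}$ and $\kappa(K,\zeta)\ge e^{-\rho_{C,K}(\zeta)}$ separately, exploiting the homogeneous-polynomial machinery of Section~4. The key bookkeeping fact is Remark~\ref{foursix}: if $p_n\in Poly(nC)$ and $u=\frac1n\log|p_n|$, then $\rho_u=\frac1n\log|\hat p_n|$, where $\hat p_n=\tilde h_{nab}$ is the top $nC$-homogeneous part. Together with the characterization $V_{C,K}^*=\rho^+_{C,K}$ and the Siciak--Zaharjuta formula this should link $\kappa_n$ directly to the value $\rho_{C,K}(\zeta)$.

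\textbf{The easy inequality $\kappa(K,\zeta)\ge e^{-\rho_{C,K}(\zeta)}$.} Take any $p_n\in Poly(nC)$ with $|\hat p_n(\zeta)|=1$ and $\|p_n\|_K=\kappa_n$ (or arbitrarily close). Set $u:=\frac1n\log\frac{|p_n|}{\|p_n\|_K}$, which lies in $L_C$ and is $\le 0$ on $K$, hence $u\le V_{C,K}\le V_{C,K}^*$ everywhere. Taking $C$-Robin functions and using that $\rho$ is monotone on $L_C$ (which follows from the global domination principle, Proposition~\ref{gctp}, applied on the lifted space $\CC^3$, or directly from the $\limsup$ definition) gives $\rho_u\le\rho_{C,K}$. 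By Remark~\ref{foursix}, $\rho_u(\zeta)=\frac1n\log|\hat p_n(\zeta)|-\frac1n\log\|p_n\|_K=-\frac1n\log\kappa_n$, so $-\frac1n\log\kappa_n\le\rho_{C,K}(\zeta)$, i.e.\ $\kappa_n^{1/n}\ge e^{-\rho_{C,K}(\zeta)}$; letting $n\to\infty$ finishes this direction.

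\textbf{The harder inequality $\kappa(K,\zeta)\le e^{-\rho_{C,K}(\zeta)}$.} Here I need, for each $\varepsilon>0$, a polynomial $p_n\in Poly(nC)$ with $|\hat p_n(\zeta)|=1$ and $\|p_n\|_K\le e^{-n\rho_{C,K}(\zeta)+n\varepsilon}$ for $n$ large. Using $V_{C,K}^*=\rho^+_{C,K}$ and the Siciak--Zaharjuta formula (\ref{siczah}) one extracts polynomials $q_m$ with $\|q_m\|_K\le 1$ and $\frac1{\deg_C(q_m)}\log|q_m(\lambda\circ\zeta)|$ close to $\rho_{C,K}(\zeta)+ab\log|\lambda|$ for large $|\lambda|$; equivalently, via the lift to $\CC^3$ and Proposition~\ref{propneed}, one uses that $\rho_{C,K}(\zeta)=H^*_{\tilde C,\{1\}\times K}(0,\zeta_1,\zeta_2)$, so there are $\tilde C$-homogeneous polynomials $h_m\in H_m(\tilde C)$ with $\|h_m\|_{\{1\}\times K}\le 1$ and $\frac1m\log|h_m(0,\zeta_1,\zeta_2)|\to\rho_{C,K}(\zeta)$. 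Writing $p_m(z_1,z_2):=h_m(1,z_1,z_2)\in Poly(mC)$, the value $h_m(0,\zeta_1,\zeta_2)$ is exactly the coefficient-sum defining $\hat p_m(\zeta)=\tilde h_{mab}(\zeta)$; rescaling so $|\hat p_m(\zeta)|=1$ multiplies $\|p_m\|_K$ by $|\hat p_m(\zeta)|^{-1}=e^{-m\rho_{C,K}(\zeta)+o(m)}$, which gives the bound on $\kappa_m$. The main obstacle is this extraction step: making precise that the ``boundary value at $t=0$'' of the homogeneous extremal function is genuinely realized (up to $\varepsilon$) by honest homogeneous polynomials normalized at $\zeta$, and that the usc regularization does not cost anything when $K$ is merely nonpluripolar. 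If one is willing to invoke regularity of $K$ one can instead use continuity of $\rho_{C,K}$ (Proposition~\ref{robinreg}) together with the Bernstein--Markov extremal result (Corollary~\ref{bmext}) applied to the orthonormal homogeneous polynomials, pushing the top-degree parts through Remark~\ref{foursix}; but the cleaner route is the $\CC^3$ lift, where $H^*_{\tilde C,\{1\}\times K}$ is a classical extremal function and standard approximation of $H^*$ by $\frac1{\deg}\log|p|$ on the ``hyperplane at infinity'' $\{t=0\}$ applies. I would carry out the argument in that order: (1) recall the lift and Proposition~\ref{propneed}; (2) prove the $\ge$ direction as above; (3) prove the $\le$ direction by selecting near-extremal $h_m\in H_m(\tilde C)$, restricting to $t=1$, normalizing at $\zeta$, and reading off $\kappa_m^{1/m}$; (4) let $m\to\infty$.
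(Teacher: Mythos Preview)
Your inequality $\kappa(K,\zeta)\ge e^{-\rho_{C,K}(\zeta)}$ is correct and matches the paper's argument. For the reverse inequality there are two gaps. First, you invoke $V_{C,K}^*=\rho_{C,K}^+$, but that is Theorem~\ref{circlek} and holds only when $e^{i\theta}\circ K=K$; it fails for general $K$ and cannot be used here. Second, your preferred route through Proposition~\ref{propneed} runs into exactly the obstacle you flag but do not resolve: the proposition gives $\rho_{C,K}(\zeta)=H^{*}_{\tilde C,\{1\}\times K}(0,\zeta)$ with the usc regularization, so one cannot simply extract $h_m\in H_m(\tilde C)$ with $\tfrac1m\log|h_m(0,\zeta)|\to\rho_{C,K}(\zeta)$ at a fixed $\zeta$. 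Your assertion that ``standard approximation of $H^*$ by $\tfrac1{\deg}\log|p|$ on $\{t=0\}$ applies'' is precisely the missing step, and it is not standard---it is essentially a regularity statement at that point.

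The paper handles the hard direction by combining the Bernstein--Markov idea with the lift rather than using either alone, and without assuming $K$ regular. It fixes a Bernstein--Markov measure, forms the orthonormal basis $\{b_j\}$ of $\bigcup_n Poly(nC)$, and proves the intermediate identity $\limsup_{j}\tfrac{1}{\deg_C(b_j)}\log|\hat b_j(\zeta)|=\rho_{C,K}(\zeta)$. The inequality $\le$ uses the one-variable convexity of $r\mapsto\max_{|\lambda|=r}u(\lambda)-ab\log r$ applied to $\lambda\mapsto\tfrac1{d_j}\log|b_j(\lambda\circ\zeta)|$, together with Hartogs' lemma and Corollary~\ref{bmext}. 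The inequality $\ge$ uses the lift in the \emph{reverse} direction from yours: each admissible $h\in H_n(\tilde C)$ is expanded as $h(1,\cdot)=\sum c_jb_j$ with bounded coefficients, giving $|h(0,\zeta)|\le(N_n-N_{n-1})\max_j|\hat b_j(\zeta)|$; this bounds $H_{\tilde C,\{1\}\times K}(0,\zeta)$ (hence, via Proposition~\ref{propneed}, $\rho_{C,K}(\zeta)$) from above by the $\limsup$ over the \emph{fixed} family $\{\hat b_j\}$, sidestepping the extraction problem. One then picks a subsequence with $\tfrac1{d_j}\log|\hat b_{k_j}(\zeta)|\ge\rho_{C,K}(\zeta)-\varepsilon$, normalizes via the Bernstein--Markov constant so that $\|p_j\|_K\le 1$, and reads off $\kappa_{d_j}^{1/d_j}\le e^{-\rho_{C,K}(\zeta)+O(\varepsilon)}$.
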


\begin{proof} We first note that
$$\kappa_n(K,\zeta)=\inf\{\frac{||p_n||_K}{|\hat p_n(\zeta)|}: p_n\in Poly(nC)\}$$
$$=\inf\{\frac{1}{|\hat p_n(\zeta)|}: p_n\in Poly(nC), \ ||p_n||_K\leq 1\}.$$
Thus for any $p_n\in Poly(nC)$ with $||p_n||_K\leq 1$, $\kappa_n(K,\zeta)\leq \frac{1}{|\hat p_n(\zeta)|}$. For such $p_n$, $\frac{1}{n}\log |p_n(z)|\leq V_{C,K}(z)$ for all $z\in \CC^2$ so that
$$\frac{1}{n}\log |\hat p_n(\zeta)|\leq \rho_{C,K}(\zeta); \ \hbox{i.e.} \ \frac{1}{|\hat p_n(\zeta)|^{1/n}}\geq  e^{-\rho_{C,K}(\zeta)}$$
for all $\zeta\in \partial P^2$. Taking the infimum over all such $p_n$,
$$\kappa_n(K,\zeta)^{1/n}\geq e^{-\rho_{C,K}(\zeta)}$$
for all $n$; taking the limit as $n\to \infty$ gives
$$\kappa(K,\zeta)\geq e^{-\rho_{C,K}(\zeta)}.$$

To prepare for the reverse inequality, we let $\{b_j\}$ be an orthonormal basis of $\bigcup_n Poly(nC)$ in $L^2(\mu)$ where 
$\mu$ is any Bernstein-Markov measure for $K$: thus for any $\epsilon >0$, there exists a constant $c_{\epsilon}$ so that
$$||p_n||_K\leq c_{\epsilon}(1+\epsilon)^{\deg_C(p_n)} ||p_n||_{L^2(\mu)}, \ p_n\in Poly(nC), \ n=1,2,...$$
In particular,
$$||b_j||_K \leq c_{\epsilon}(1+\epsilon)^{\deg_C(b_j)}$$ 
and from Corollary \ref{bmext},
\begin{equation}\label{corbm} \limsup_{j \to \infty} \tfrac{1}{\deg_C(b_j)}\log|b_j(z)|= V_{C,K}(z), \ z\not\in \hat K.\end{equation}

We next show that for $\zeta \in \partial P^2$,
\begin{equation}\label{keyrobin} \limsup_{j \to \infty} \tfrac{1}{\deg_C(b_j)}\log|\hat b_j(\zeta)|= \rho_{C,K}(\zeta).\end{equation}
For one inequality, we use the fact that for a function $u$ subharmonic on $\CC$ with $u\in L$, the function $r\to \max_{|t|=r}u(t)$ is a convex function of $\log r$. Hence 
$$\limsup_{|t|\to \infty} [u(t)-\log |t|]=\inf_r \bigl(\max_{|t|=r}u(t)-\log r\bigr).$$ 
Thus if $u\in abL$; i.e., $u(z)-ab\log |z| =0(1), \ |z|\to \infty$, we have
\begin{equation}\label{uinabl} \limsup_{|t|\to \infty} [u(t)-ab\log |t|]=\inf_r\bigl(\max_{|t|=r}u(t)-ab\log r\bigr).\end{equation} 
Fix $\zeta \in \partial P^2$ and letting $d_j:=\deg_C(b_j)$ apply this to the function 
$$\lambda \to \frac{1}{d_j}\log |b_j(\lambda \circ \zeta)|=\frac{1}{d_j}\log |b_j(\lambda^a\zeta_1, \lambda^b \zeta_2)|.$$
We obtain (using also Remark \ref{foursix}), for any $r$,
$$\frac{1}{d_j}\log |\hat b_j(\zeta)|=\limsup_{|\lambda|\to \infty} [ \frac{1}{d_j}\log |b_j(\lambda \circ \zeta)|-ab\log |\lambda|]$$
$$\leq \max_{|\lambda|=r} \frac{1}{d_j}\log |b_j(\lambda \circ \zeta)| -ab\log r.$$
Thus
$$\limsup_{j\to \infty} \frac{1}{d_j}\log |\hat b_j(\zeta)|\leq 
\limsup_{j\to \infty} \bigl(\max_{|\lambda|=r} \frac{1}{d_j}\log |b_j(\lambda \circ \zeta)| -ab\log r\bigr)$$
$$\leq \max_{|\lambda|=r} \bigl(\limsup_{j\to \infty}\frac{1}{d_j}\log |b_j(\lambda \circ \zeta)| -ab\log r\bigr)=\max_{|\lambda|=r} [V_{C,K}(\lambda \circ \zeta)-ab\log r]$$
where we used Hartogs lemma and (\ref{corbm}). Thus, letting $r\to \infty$,
$$\limsup_{j\to \infty} \frac{1}{d_j}\log |\hat b_j(\zeta)|\leq \rho_{C,K}(\zeta).$$

In order to prove the reverse inequality in (\ref{keyrobin}), we use Proposition \ref{propneed}. With the notation from the previous section, and following the proof of Th\'eor\`eme 2 in \cite{Zer}, let $h \in H_n(\tilde C)$ with $||h||_{1\times K}\leq 1$. Then 
$$p(z_1,z_2):=h(1,z_1,z_2) \in Poly(nC)$$ with $||p||_K\leq 1$. Writing $p=\sum_{j=1}^{N_n} c_j b_j$ where $N_n=$dim$(Poly(nC))$ as in the proof of Proposition \ref{threeone}, we have $|c_j|\leq 1$ and hence
$$|h(1,z_1,z_2)|=|p(z_1,z_2)|\leq \sum_{j=1}^{N_n} |b_j(z_1,z_2)|.$$
Then
$$|h(1/\lambda,z_1,z_2)|=|\lambda^{-nab}\cdot p_n(\lambda^a z_1,\lambda^b z_2)|\leq |\lambda^{-nab}\cdot \sum_{j=1}^{N_n} b_j(\lambda^a z_1,\lambda^b z_2)|.$$
Fixing $(z_1,z_2)=(\zeta_1,\zeta_2)$ and letting $|\lambda|\to \infty$, we get
$$|h(0,\zeta_1,\zeta_2)|\leq \sum_{b_j \in PolynC \setminus Poly(n-1)C} |\hat b_j(\zeta_1,\zeta_2)|\leq (N_n-N_{n-1}) |\hat b_{j_n}(\zeta_1,\zeta_2)|$$
where $N_{n-1}\leq j_n \leq N_n$. Using Proposition \ref{propneed} we conclude that 
$$\rho_{C,K}(\zeta_1,\zeta_2)\leq \limsup_{j \to \infty} \tfrac{1}{\deg_C(b_j)}\log|\hat b_j(\zeta_1,\zeta_2)|$$
and (\ref{keyrobin}) is proved.

We now use (\ref{keyrobin}) to prove that $\kappa(K,\zeta)\leq e^{-\rho_{C,K}(\zeta)}$ for $\zeta \in \partial P^2$ which will finish the proof of the proposition. Fixing such a $\zeta$ and $\epsilon >0$, take a subsequence $\{b_{k_j}\}$ with $d_j:=\deg_C(b_{k_j})$ such that
$$\frac{1}{d_j}\log |\hat b_{k_j}(\zeta)|\geq \rho_{C,K}(\zeta)-\epsilon, \ j\geq j_0(\epsilon).$$
Letting 
$$p_j(z):=\frac{b_{k_j}(z)}{c_{\epsilon}(1+\epsilon)^{d_j}},$$
we have $||p_j||_K\leq 1$ and
$$\rho_{C,K}(\zeta)-\epsilon \leq \frac{1}{d_j}\log |\hat b_{k_j}(\zeta)|=\frac{1}{d_j}\log |\hat p_j(\zeta)|+\frac{1}{d_j}\log c_{\epsilon} +\log (1+\epsilon).$$
Thus
$$\epsilon -\rho_{C,K}(\zeta)\geq \frac{1}{d_j}\log \frac{1}{|\hat p_j(\zeta)|}-\frac{1}{d_j}\log c_{\epsilon} -\log (1+\epsilon)$$
$$\geq \frac{1}{d_j}\log \kappa_{d_j}(K,\zeta)-\frac{1}{d_j}\log c_{\epsilon} -\log (1+\epsilon).$$
Letting $j\to \infty$,
$$\epsilon -\rho_{C,K}(\zeta)\geq \log \kappa (K,\zeta)-\log (1+\epsilon);$$
which holds for all $\epsilon >0$. Letting $\epsilon \to 0$ completes the proof.

\end{proof}

Using this proposition, and the observation within its proof that
$$\kappa_n(K,\zeta)=\inf\{\frac{1}{|\hat p_n(\zeta)|}: p_n\in Poly(nC), \ ||p_n||_K\leq 1\},$$
we obtain a result which will be useful in proving Theorem \ref{sicbloom}.

\begin{corollary} \label{siciak2.3} Let $K\subset \CC^2$ be compact and regular. Given $\epsilon >0$, there exists a positive integer $m$ and a finite set of polynomials $\{W_1,...,W_s\}\subset Poly(mC)$ such that $||W_j||_K=1, \ j=1,...,s$ and 
$$\frac{1}{m}\log \max_j |\hat W_j(\zeta)|\geq \rho_{C,K}(\zeta)-\epsilon \ \hbox{for all} \ \zeta \in \partial P^2.$$

\end{corollary}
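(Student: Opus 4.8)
The plan is to build the polynomials $W_1,\dots,W_s$ by a standard compactness/covering argument applied to the pointwise lower bound for $\rho_{C,K}$ furnished by Proposition \ref{niv}. First I would fix $\epsilon>0$ and recall from Proposition \ref{niv} that for each $\zeta\in\partial P^2$ we have $\kappa(K,\zeta)=e^{-\rho_{C,K}(\zeta)}$, and from the observation just recorded that $\kappa_n(K,\zeta)=\inf\{1/|\hat p_n(\zeta)|\colon p_n\in Poly(nC),\ \|p_n\|_K\le 1\}$. Hence for each fixed $\zeta$ there exist $n=n(\zeta)$ and $p=p_\zeta\in Poly(nC)$ with $\|p_\zeta\|_K\le 1$ and $\tfrac{1}{n}\log|\hat p_\zeta(\zeta)|\ge \rho_{C,K}(\zeta)-\tfrac{\epsilon}{3}$; after rescaling so that $\|p_\zeta\|_K=1$ this reads $\tfrac{1}{n}\log|\hat p_\zeta(\zeta)|\ge \rho_{C,K}(\zeta)-\tfrac{\epsilon}{3}$ still, since normalizing can only increase $|\hat p_\zeta|$.

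Next I would upgrade this pointwise statement to a local one. The function $\zeta\mapsto \tfrac{1}{n}\log|\hat p_\zeta(\zeta')|$ is continuous in $\zeta'$, and by Proposition \ref{robinreg} the function $\rho_{C,K}$ is (uniformly) continuous on $\partial P^2$ since $K$ is regular. Therefore there is an open neighborhood $U_\zeta$ of $\zeta$ in $\partial P^2$ on which $\tfrac{1}{n(\zeta)}\log|\hat p_\zeta(\zeta')|\ge \rho_{C,K}(\zeta')-\tfrac{2\epsilon}{3}$ for all $\zeta'\in U_\zeta$. Since $\partial P^2$ is compact, finitely many such neighborhoods $U_{\zeta_1},\dots,U_{\zeta_s}$ cover it; let $p_i:=p_{\zeta_i}\in Poly(n_iC)$ with $\|p_i\|_K=1$ and $n_i:=n(\zeta_i)$. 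Then for every $\zeta\in\partial P^2$ there is some $i$ with $\tfrac{1}{n_i}\log|\hat p_i(\zeta)|\ge \rho_{C,K}(\zeta)-\tfrac{2\epsilon}{3}$, i.e. $\tfrac{1}{m}\log\max_i |\hat p_i(\zeta)|\cdot$ — but the $n_i$ differ, so a homogenization step is needed.

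To reconcile the varying degrees, set $m$ to be a common multiple of $n_1,\dots,n_s$, say $m=\mathrm{lcm}(n_1,\dots,n_s)$ or simply $m=n_1\cdots n_s$, and replace $p_i$ by $W_i:=p_i^{m/n_i}\in Poly(mC)$. Then $\|W_i\|_K=\|p_i\|_K^{m/n_i}=1$, and by the multiplicativity of the top-degree homogeneous part recorded in Remark \ref{foursix} (i.e. $\widehat{p_i^k}=\hat p_i^{\,k}$), we get $\tfrac{1}{m}\log|\hat W_i(\zeta)|=\tfrac{1}{m}\cdot\tfrac{m}{n_i}\log|\hat p_i(\zeta)|=\tfrac{1}{n_i}\log|\hat p_i(\zeta)|$. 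Consequently $\tfrac{1}{m}\log\max_i|\hat W_i(\zeta)|\ge \rho_{C,K}(\zeta)-\tfrac{2\epsilon}{3}\ge \rho_{C,K}(\zeta)-\epsilon$ for all $\zeta\in\partial P^2$, which is the assertion.

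The only genuinely delicate point is the passage from the pointwise bound to a uniform one, and this is exactly where both the continuity of $\rho_{C,K}$ on $\partial P^2$ (Proposition \ref{robinreg}, using regularity of $K$) and the compactness of $\partial P^2$ are essential; without regularity $\rho_{C,K}$ need only be upper semicontinuous and the covering argument would fail. Everything else — the existence of near-optimal $p_\zeta$, the normalization of $K$-norms, and the degree-homogenization via powers together with $\widehat{p^k}=\hat p^{\,k}$ — is routine given Proposition \ref{niv} and Remark \ref{foursix}.
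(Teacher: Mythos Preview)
Your proof is correct and follows essentially the same approach as the paper: use Proposition \ref{niv} to get a near-optimal polynomial at each point, use continuity of $\rho_{C,K}$ (Proposition \ref{robinreg}) together with compactness of $\partial P^2$ to pass to a finite cover, and then raise to powers to equalize the $C$-degrees. Your write-up is in fact a bit more careful than the paper's in tracking the $\epsilon$'s, in justifying the normalization $\|p_\zeta\|_K=1$, and in invoking $\widehat{p^k}=\hat p^{\,k}$ explicitly.
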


\begin{proof} From Proposition \ref{niv}, given $\epsilon >0$, for each $\zeta \in \partial P^2$ we can find a polynomial $p\in Poly(nC)$ for $n\geq n_0(\epsilon)$ with $||p||_K=1$ and 
$$\frac{1}{n}\log |\hat p(\zeta)|\geq  \rho_{C,K}(\zeta)-\epsilon.$$
By continuity of $\rho_{K,C}$, which follows from Proposition \ref{robinreg}, such an inequality persists in a neighborhood of $\zeta$. We take a finite set $\{p_1,...,p_s\}$ of such polynomials with $p_i\in Poly(n_iC)$ such that 
$$\max_i \frac{1}{n_i}\log |\hat p_i(\zeta)|\geq  \rho_{C,K}(\zeta)-\epsilon  \ \hbox{for all} \ \zeta \in \partial P^2.$$
Raising the $p_i$'s to powers to obtain $W_i$'s of the same $C-$degree $m$, we still have $||W_i||_K=1$ and 
$$\frac{1}{m}\log \max_j |\hat W_j(\zeta)|\geq \rho_{C,K}(\zeta)-\epsilon \ \hbox{for all} \ \zeta \in \partial P^2.$$
\end{proof}

Given $K\subset \CC^2$ compact, and given $h_n \in H_n(C)$, we define 
$$Tch_Kh_n:= h_n + p_{n-1} \ \hbox{where} \ p_{n-1}\in Poly(n-1)C$$
and $||Tch_Kh_n||_K =\inf \{||h_n + q_{n-1}||_K: q_{n-1}\in Poly(n-1)C\}$. The polynomial $Tch_Kh_n$ need not be unique but each such polynomial yields the same value of $||Tch_Kh_n||_K $. The next result is similar to Theorem 3.2 of \cite{Bapp}.

\begin{theorem}\label{sicbloom} Let $K\subset \CC^2$ be compact, regular and polynomially convex. If $\{Q_n\}$ is a sequence of polynomials with $Q_n\in H_n(C)$ satisfying
$$\limsup_{n\to \infty} \frac{1}{n}\log |Q_n(\zeta)|\leq \rho_{C,K}(\zeta), \ \hbox{all} \ \zeta \in \partial P^2,$$
then
$$\limsup_{n\to \infty} ||Tch_KQ_n||_K^{1/n} \leq 1.$$
\end{theorem}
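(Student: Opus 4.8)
The plan is to mimic Bloom's argument for Theorem 3.2 of \cite{Bapp}, using the Chebyshev constants $\kappa_n(K,\zeta)$ and Proposition \ref{niv} as the bridge between the pointwise growth hypothesis on $Q_n$ and the sup-norm of $Tch_KQ_n$. First I would fix $\epsilon>0$ and apply Corollary \ref{siciak2.3} to produce an integer $m$ and polynomials $W_1,\dots,W_s\in Poly(mC)$ with $\|W_j\|_K=1$ and $\frac{1}{m}\log\max_j|\hat W_j(\zeta)|\ge \rho_{C,K}(\zeta)-\epsilon$ for all $\zeta\in\partial P^2$. The key observation is that for any $h_n\in H_n(C)$, the ratio $\max_j |\hat W_j(\zeta)|/|h_n(\zeta)|$ controls things: writing $n=qm+r$ and comparing $h_n$ against the homogeneous parts $\hat W_j^{\,q}\cdot(\text{something of }C\text{-degree }r)$, one gets a family of $C$-degree-$n$ homogeneous competitors for $h_n$ whose $K$-norms are bounded (since $\|W_j\|_K=1$) and whose values on $\partial P^2$ dominate $e^{n(\rho_{C,K}(\zeta)-\epsilon)}$ up to controlled factors.

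The heart of the matter is the estimate on $\|Tch_KQ_n\|_K$. Here I would use that $Q_n\in H_n(C)$ and, by Remark \ref{foursix}, $\widehat{Tch_KQ_n}=\hat Q_n=Q_n$ since the lower-order correction $p_{n-1}\in Poly((n-1)C)$ has no $C$-degree-$n$ homogeneous part. Thus $Tch_KQ_n$ is an admissible competitor in the Chebyshev problem defining $\kappa_n(K,\zeta)$ after normalizing by $|Q_n(\zeta)|$: for each $\zeta\in\partial P^2$ with $Q_n(\zeta)\ne 0$,
$$\kappa_n(K,\zeta)\le \frac{\|Tch_KQ_n\|_K}{|Q_n(\zeta)|}.$$
Actually the right direction is the reverse: I want an upper bound on $\|Tch_KQ_n\|_K$, so I would instead run the argument via the orthonormal-basis / Bernstein–Markov route exactly as in the second half of the proof of Proposition \ref{niv}. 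Specifically, by Proposition \ref{niv}, $\kappa_n(K,\zeta)^{1/n}\to e^{-\rho_{C,K}(\zeta)}$ uniformly in $\zeta\in\partial P^2$ (uniformity coming from the continuity in Proposition \ref{robinreg} together with the subadditivity $\kappa_{n+m}\le\kappa_n\kappa_m$ via a Dini-type argument). Then for each $\zeta$ there is $p_{n,\zeta}\in Poly(nC)$ with $\|p_{n,\zeta}\|_K\le 1$ and $|\hat p_{n,\zeta}(\zeta)|$ close to $\kappa_n(K,\zeta)^{-1}\approx e^{n\rho_{C,K}(\zeta)}$. The hypothesis $\limsup_n\frac{1}{n}\log|Q_n(\zeta)|\le\rho_{C,K}(\zeta)$, upgraded to a uniform statement on the compact $\partial P^2$ by Hartogs' lemma (the functions $\frac{1}{n}\log|Q_n|$ are $C$-homogeneous, hence determined by their values on $\partial P^2$, and lie in $L_C$), gives $|Q_n(\zeta)|\le e^{n(\rho_{C,K}(\zeta)+\epsilon)}$ for $n$ large. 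Combining, $Q_n/\hat p_{n,\zeta}$ is, on a neighborhood of $\zeta$, a ratio whose modulus is $\le e^{n\cdot 2\epsilon}$; covering $\partial P^2$ by finitely many such neighborhoods and using that $Tch_KQ_n$ is the best $K$-norm approximation to $Q_n$ among $Q_n+Poly((n-1)C)$, one writes $Q_n - (Q_n - \hat p_{n,\zeta}\cdot c_{n,\zeta})$ as such a competitor, where $\hat p_{n,\zeta}\cdot c_{n,\zeta}$ is chosen to have the same homogeneous top part as $Q_n$; then $\|Tch_KQ_n\|_K$ is bounded by the $K$-norm of the difference, which is $\le (\text{const})\cdot e^{n\cdot 2\epsilon}\cdot\|p_{n,\zeta}\|_K\le (\text{const})\, e^{2n\epsilon}$.

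The main obstacle I anticipate is making the uniform-in-$\zeta$ passage rigorous and correctly handling the homogeneous-versus-full-polynomial bookkeeping: the Chebyshev constants $\kappa_n(K,\zeta)$ are defined via the top homogeneous part $\hat p_n$, while $Tch_KQ_n$ lives in $Poly(nC)$ with prescribed top part $Q_n\in H_n(C)$, so one must match homogeneous degrees when subtracting, and one must be careful that raising the $p_{n,\zeta}$ to powers (to equalize $C$-degrees across the finite cover, as in Corollary \ref{siciak2.3}) does not spoil the normalization $\|\cdot\|_K\le 1$. Once that is set up, taking $\limsup_{n\to\infty}$ of $\|Tch_KQ_n\|_K^{1/n}$ yields $\le e^{2\epsilon}$, and letting $\epsilon\to 0$ finishes the proof. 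I would also double-check that polynomial convexity of $K$ (so $\hat K=K$) is exactly what lets me invoke Corollary \ref{bmext} and the identification $\{V_{C,K}=0\}=K$ without a regularization gap.
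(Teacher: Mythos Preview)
Your proposal has a genuine gap at the decisive step: constructing a single competitor $p_n\in Poly(nC)$ with $\hat p_n=Q_n$ and with $\|p_n\|_K$ controlled. The Chebyshev constants $\kappa_n(K,\zeta)$ and the polynomials $p_{n,\zeta}$ they produce give only \emph{pointwise} information: at a given $\zeta\in\partial P^2$ you can normalize so that the top part agrees with $Q_n$ \emph{at $\zeta$}, but not as a polynomial identity. Your suggested competitor ``$Q_n-(Q_n-\hat p_{n,\zeta}\cdot c_{n,\zeta})$'' does not lie in $Q_n+Poly((n-1)C)$ unless $c_{n,\zeta}\hat p_{n,\zeta}\equiv Q_n$, which would force $Q_n$ and $\hat p_{n,\zeta}$ to be proportional; and a finite cover of $\partial P^2$ does not glue these local pieces into a global polynomial with the prescribed top homogeneous part. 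The products $\hat W_j^{\,q}\cdot(\text{something of $C$-degree }r)$ you mention span only a subspace of $H_n(C)$, so there is no reason $Q_n$ can be written in that form either.

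The paper closes exactly this gap with a different, global device: after obtaining the $W_j$ and the inequality $\frac{1}{n}\log|Q_n|<\frac{1}{m}\log\max_j|\hat W_j|+2\epsilon$ on all of $\CC^2$ (via Hartogs and $C$-homogeneity), it embeds $\CC^2$ into $\CC^2\times\CC^s$ via $z\mapsto(z,\delta\hat W(z))$, traps the graph in a complete circled strictly pseudoconvex domain $D$, and invokes a bounded holomorphic extension theorem (Siciak) to extend $Q_n$ to $F_n\in H^\infty(D)$ with $\|F_n\|_D\le M\|Q_n\|_{\theta\circ G}$. The Cauchy coefficients of $F_n$ then yield an explicit polynomial $p_n(z)=\sum' a_{IJ}z^I(\delta W(z))^J$ with $\hat p_n=Q_n$ identically and $\|p_n\|_K\le C_n(e^{2\epsilon}\theta^{ab}R)^n$. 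This extension step is the engine of the proof; nothing in your outline substitutes for it.
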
 

\begin{proof} We follow the proof in \cite{Siciak}. Given $\epsilon >0$, we start with polynomials $\{W_1,...,W_s\}\subset Poly(mC)$ such that $||W_j||_K=1, \ j=1,...,s$ and 
$$\frac{1}{m}\log \max_j |\hat W_j(\zeta)|\geq \rho_{C,K}(\zeta)-\epsilon \ \hbox{for all} \ \zeta \in \partial P^2$$
(and hence on all of $\CC^2$) from Corollary \ref{siciak2.3}. From the hypotheses on $\{Q_n\}$ and the continuity of $\rho_{C,K}$ (Proposition \ref{robinreg}), we apply Hartogs lemma to conclude
$$\frac{1}{n}\log |Q_n(\zeta)| < \rho_{C,K}(\zeta)+\epsilon, \ \zeta \in \partial P^2, \ n\geq n_0(\epsilon).$$
Thus
\begin{equation} \label{no5} \frac{1}{n}\log |Q_n(\zeta)| < \frac{1}{m}\log \max_j |\hat W_j(\zeta)|+2\epsilon,  \ \zeta \in \partial P^2, \ n\geq n_0(\epsilon).\end{equation}
Note that $Q_n\in H_n(C)$ implies $Q_n(\lambda \circ \zeta)=\lambda^{nab}Q_n(\zeta)$ so that 
$$\frac{1}{n}\log |Q_n(\lambda \circ \zeta)| =\frac{1}{n}\log |Q_n(\zeta)| +ab\log |\lambda|.$$
Similary $\hat W_j \in H_m(C)$ implies 
$$\frac{1}{m}\log |\hat W_j(\lambda \circ \zeta)| =\frac{1}{m}\log |\hat W_j(\zeta)| +ab\log |\lambda|$$
so that (\ref{no5}) holds on all of $\CC^2$. 

We fix $R>1$ and define
$$G:=\{z\in \CC^2: |\hat W_j(z)| < R^m, \ j=1,...,s\}.$$
Since $\hat W_j(\lambda \circ \zeta)=\lambda^{mab}\hat W_j(\zeta)$, we have $e^{i\theta}\circ G=G$; since $\hat W_j(0)=0$, we have $0\in G$. We claim $G$ is bounded. To see this, choose $r>0$ so that 
$$K \subset rP^2 =\{(z_1,z_2)\in \CC^2: |z_1|, |z_2|\leq r\}.$$
Then $V_{C,K}(z_1,z_2)\geq H_C(z_1/r,z_2/r)$ and hence
$$\rho_{C,K}(z_1,z_2) \geq H_C(z_1/r,z_2/r), \ (z_1,z_2)\in \CC^2\setminus rP^2.$$
Since
$$\frac{1}{m}\log \max_j |\hat W_j(z_1,z_2)|\geq \rho_{C,K}(z_1,z_2)-\epsilon \ \hbox{for all} \ (z_1,z_2)\in \CC^2,$$
$G$ is bounded.

Next, choose $\delta >0$ sufficiently large so that 
$$K\cup G\subset \{(z_1,z_2)\in \CC^2: |z_1|,|z_2|< \delta R^m\}.$$
Define
$$\Delta:= \{(z,w)\in \CC^2\times \CC^s: |z_1|,|z_2|< \delta R^m, \ |w_j|< \delta R^m, \ j=1,...,s\}.$$
Given $\theta >1$, we can choose $p>0$ sufficiently large so that 
$$D:=\{(z,w)\in \CC^2\times \CC^s:|z_1|^p+|z_2|^p+|w_1|^p+\cdots+|w_s|^p < (\theta^{abm} \delta R^m)^p\},$$
which is complete circled (in the ordinary sense) and strictly pseudoconvex, satisfies 
$$\Delta \subset D \subset \theta^{abm} \Delta$$
(note this is just a replacement of an $l^{\infty}-$norm with an $l^p-$norm). 

We write $z:=(z_1,z_2)\in \CC^2$ and $\hat W(z):=(\hat W_1(z),...,\hat W_s(z))\in \CC^s$ for simplicity in notation. Let 
$$Y:=\{(z,\delta \hat W(z))\in \CC^2\times \CC^s: z\in \CC^2\}.$$
Then $Y$ is a closed, complex submanifold of $\CC^2\times \CC^s$. Appealing to the bounded, holomorphic extension result stated as Theorem 3.1 in \cite{Siciak}, there exists a positive constant $M$ such that for every $f \in H^{\infty}(Y\cap D)$ there exists $F\in H^{\infty}(D)$ with 
$$||F||_D \leq M ||f||_{Y\cap D} \ \hbox{and} \ F=f \ \hbox{on} \ Y\cap D.$$
We will apply this to the polynomials $Q_n(z)$. First, we observe that if $\pi: \CC^2\times \CC^s\to \CC^2$ is the projection $\pi(z,w)=z$, then
$$G =\pi (Y\cap \Delta) \subset \pi(Y\cap D) \subset \pi (Y\cap \theta^{abm} \Delta) \subset \theta \circ G.$$
To see the last inclusion -- note we use $\theta \circ G$, not $\theta G$ -- first note that 
$$s=\theta \circ z \iff z= \frac{1}{\theta}\circ s$$
and thus since $\hat W_j( \frac{1}{\theta}\circ s)=  \frac{1}{\theta^{mab}}\hat W_j(s)$, 
$$\theta \circ G=\{z\in \CC^2: |\hat W_j(z)| < (\theta^{ab}R)^m, \ j=1,...,s\}.$$
On the other hand,
$$\pi (Y\cap \theta^{abm} \Delta)=\{z\in \CC^2: (z,w)\in Y\cap \theta^{abm} \Delta\}$$
$$=\{z\in \CC^2:|z_1|, |z_2|< \theta^{abm} \delta R^m, \  \delta |\hat W_j(z)|< \theta^{abm}\delta R^m, \ j=1,...,s\}.$$

Applying the bounded holomorphic extension theorem to $f(z,w):=Q_n(z)$ for each $n$, we get $F_n(z,w)\in H^{\infty}(D)$ with 
$$Q_n(z)=F_n (z, \delta \hat W(z))$$
for all $z\in \pi(Y\cap D)$ and
$$||F_n||_D \leq M ||Q_n||_{\pi(Y\cap D)}.$$
Utilizing the set inclusion $\pi(Y\cap D) \subset \theta \circ G$, the definition of $\theta \circ G$ and (\ref{no5}) (which recall is valid on all of $\CC^2$), 
\begin{equation}\label{no7} ||F_n||_D \leq M ||Q_n||_{\theta \circ G}\leq M(e^{2\epsilon} \theta^{ab}R)^n \ \hbox{for} \ n\geq n_0(\epsilon). \end{equation}

Since $D$ is complete circled, we can expand $F_n$ into a series of homogeneous polynomials which converges locally uniformly on all of $D$. Rearranging into a multiple power series, we write
$$F_n(z,w):=\sum_{|I|+|J|\geq 0} a_{IJ}z^Iw^J, \ (z,w)\in D.$$
Using $Q_n(z)=F_n (z, \delta \hat W(z))$ for $z\in \pi(Y\cap D)$, we obtain for such $z$,
$$Q_n(z)={\sum}' a_{IJ}z^I (\delta \hat W(z))^J$$
where the prime denotes that the sum is taken over multiindices 
\begin{equation}\label{prime} I=(i_1,i_2)\in (\ZZ^+)^2, \ J\in (\ZZ^+)^s, \ \hbox{where} \ ai_1+bi_2 =:iab \ \hbox{and} \ i+|J|m=n.\end{equation}
This is because $Q_n\in H_n(C)$ and $\hat W_j\in H_m(C), \ j=1,...,s$. Precisely, each $\hat W_j(z)$ is of the form $\sum_{a\alpha + b\beta = mab} c_{\alpha \beta} z_1^{\alpha}z_2^{\beta}$ so that if $J=(j_1,...,j_s)$, a typical monomial occurring in $Q_n(z)$ must be of the form
\begin{equation}\label{monomial} z_1^{i_1}z_2^{i_2}(z_1^{\alpha_1}z_2^{\beta_1})^{j_1}\cdots (z_1^{\alpha_s}z_2^{\beta_s})^{j_s}\end{equation}
where $a\alpha_k +b\beta_k =mab, \ k=1,...,s$; hence
$$a(\alpha_1j_1+\cdots +\alpha_s j_s) + b(\beta_1j_1+\cdots +\beta_s j_s) =|J|mab.$$
In order for (\ref{monomial}) to (possibly) appear in $Q_n(z)$, we require (\ref{prime}). The positive integers $i$ in (\ref{prime}) are related to the lengths $|I|$ by $|I|=i_1+i_2\leq ai_1+bi_2=iab$; and if, say, $a\leq b$ we have a reverse estimate
$$iab= ai_1+bi_2 \leq b|I| \ \hbox{so that} \ |I|\geq ia.$$
However, all we will need to use is the fact that the number of multiindices occurring in the sum for $Q_n(z)$ is at most $N_n=$ dim$(Poly(nC))$ and $\lim_{n\to \infty} N_n^{1/n}=1$.

Applying the Cauchy estimates on the polydisk $\Delta \subset D$, we obtain
\begin{equation} \label{no8} |a_{IJ}|\leq \frac{||F_n||_D}{(\delta R^m)^{|I|+|J|}}\end{equation}
for $n\geq n_0(\epsilon)$. 

We now define
$$p_n(z):={\sum}' a_{IJ}z^I (\delta W(z))^J.$$
From (\ref{prime}) and the previously observed fact that 
$$\hbox{if} \ q_j \in Poly(n_jC), \ j=1,2 \ \hbox{then} \ \hat {q_1q_2}=\hat q_1 \hat q_2,$$ we have $\hat p_n(z) = Q_n(z)$. Using the estimates (\ref{no7}), (\ref{no8}), the facts that $||W_j||_K=1, \ j=1,...,s$ and 
$$K\cup G\subset \{(z_1,z_2)\in \CC^2: |z_1|,|z_2|< \delta R^m\},$$
we obtain
$$||Tch_K Q_n||_K \leq ||p_n||_K \leq {\sum}' \frac{M(e^{2\epsilon} \theta^{ab}R)^n}{(\delta R^m)^{|I|+|J|}}\cdot (\delta R^m)^{|I|}\delta^{|J|}$$
$$={\sum}' M(e^{2\epsilon} \theta^{ab})^n\cdot R^{n-m|J|}\leq {\sum}' M(e^{2\epsilon} \theta^{ab})^n\cdot R^n$$
$$\leq C_n (e^{2\epsilon} \theta^{ab}R)^n, \ n\geq n_0(\epsilon),$$
where $C_n$ can be taken as $M$ times the cardinality of the set of multiindices in (\ref{prime}). Clearly $\lim_{n\to \infty} C_n^{1/n}=1$ so that
$$ \limsup_{n\to \infty} ||Tch_KQ_n||_K^{1/n} \leq e^{2\epsilon} \theta^{ab}R.$$
Since $\epsilon >0, \ R>1$ and $\theta >1$ were arbitrary, the result follows.

\end{proof}

\section{The integral formula}

In the standard setting of the Robin function ${\bf \rho_u}$ associated to $u\in L(\CC^2)$ (cf., \ref{stdrobin}), for $z=(z_1,z_2)\not = (0,0)$ we can define
$$ \underline {\bf \rho_u}(z):=\limsup_{|\lambda|\to \infty} [u(\lambda z)-\log |\lambda z|]={\bf \rho_u}(z)-\log |z| $$
so that $\underline {\bf \rho_u}(tz)=\underline {\bf \rho_u}(z)$ for $t\in \CC \setminus \{0\}$. Thus we can consider $\underline {\bf \rho_u}$ as a function on $\PP^1$ where to $p=(p_1,p_2)\in \partial P^2$ we associate the point where the complex line $\lambda \to \lambda p$ hits $H_{\infty}$. The integral formula Theorem 5.5 of \cite{BT} in this setting is the following.

\begin{theorem} \label{bt5.5} {\bf(Bedford-Taylor)} Let $u,v,w\in L^+(\CC^2)$. Then
$$\int_{\CC^2} (udd^cv-vdd^cu)\wedge dd^cw= 2\pi  \int_{\PP^1} (\underline {\bf \rho_u}-\underline {\bf \rho_v})\wedge (dd^c \underline{\bf \rho_w}+\Omega)$$
where $\Omega$ is the standard K\"ahler form on $\PP^1$.
\end{theorem}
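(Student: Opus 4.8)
The statement is Theorem~5.5 of \cite{BT}, so the plan is to follow that argument; the underlying mechanism is a ``Stokes' theorem at infinity'' on $\PP^2$ whose boundary term lives on $H_\infty\cong\PP^1$. Fix the Fubini--Study potential $g(z):=\tfrac12\log(1+|z|^2)$, which is smooth and psh on $\CC^2$ with $g(z)-\log|z|\to0$, and put $\omega:=dd^cg$ (the Fubini--Study form, restricting on $H_\infty\cong\PP^1$ to the form the statement calls $\Omega$). Because $u,v,w\in L^+(\CC^2)$, the differences $u-g,v-g,w-g$ extend to \emph{bounded} $\omega$-psh functions on $\PP^2$ whose restrictions to $H_\infty$ are precisely $\underline{\bf \rho_u},\underline{\bf \rho_v},\underline{\bf \rho_w}$. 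The first step is to reduce to the case where these (equivalently, $u,v,w$) are smooth: I would approximate $u-g,v-g,w-g$ by decreasing sequences of smooth $\omega$-psh functions on the compact manifold $\PP^2$ (e.g.\ by averaging over $U(3)$). Along such sequences the left side is continuous by the Bedford--Taylor convergence theorems for mixed Monge--Amp\`ere currents of locally bounded psh functions, and on the right side $\underline{\bf \rho_{u_j}}\downarrow\underline{\bf \rho_u}$, $\underline{\bf \rho_{v_j}}\downarrow\underline{\bf \rho_v}$ pointwise on $\PP^1$ while the Robin measures $dd^c\underline{\bf \rho_{w_j}}+\Omega$ are positive measures of the same total mass converging weakly to $dd^c\underline{\bf \rho_w}+\Omega$; so both sides pass to the limit and it suffices to treat $u-g,v-g,w-g\in C^\infty(\PP^2)$.

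For such smooth $u,v,w$ I would run Stokes on the annuli $\{\rho<|z|<R\}$. Using $d(dd^cw)=0$ together with the pointwise identity $du\wedge d^cv=dv\wedge d^cu$ one gets $d\bigl(u\,d^cv-v\,d^cu\bigr)=u\,dd^cv-v\,dd^cu$, hence
$$\int_{\{|z|=R\}}(u\,d^cv-v\,d^cu)\wedge dd^cw-\int_{\{|z|=\rho\}}(u\,d^cv-v\,d^cu)\wedge dd^cw=\int_{\{\rho<|z|<R\}}(u\,dd^cv-v\,dd^cu)\wedge dd^cw .$$
Letting $\rho\to0$ (the inner sphere integral $\to0$ since $u,v$ and $dd^cw$ are bounded near $0$ and $\mathrm{vol}\{|z|=\rho\}\to0$) and $R\to\infty$, and using that $\int_{\CC^2}(u\,dd^cv-v\,dd^cu)\wedge dd^cw$ converges absolutely -- $dd^cu\wedge dd^cw$ is a finite positive measure on $\PP^2$ with no mass on $H_\infty$, $u-\log|z|$ is bounded, and the residual $\log|z|\,dd^c(v-u)\wedge dd^cw$ is integrable because the escaping mixed masses $\int_{\{|z|>R\}}dd^c(v-u)\wedge dd^cw$ are $O(R^{-2})$ for functions smooth on $\PP^2$ -- I obtain
$$\int_{\CC^2}(u\,dd^cv-v\,dd^cu)\wedge dd^cw=\lim_{R\to\infty}\int_{\{|z|=R\}}(u\,d^cv-v\,d^cu)\wedge dd^cw .$$

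It remains to identify this limit with the right side. Parametrising $\{|z|=R\}$ by $z=R\zeta$ with $\zeta$ on the unit sphere, the fact that $u-g,v-g,w-g$ extend smoothly to $\PP^2$ forces $u(R\zeta)-\log R\to\underline{\bf \rho_u}(\zeta)$ in $C^\infty$ (and similarly for $v,w$), because the maps $\zeta\mapsto R\zeta$ converge smoothly, as $R\to\infty$, to the Hopf fibration $\zeta\mapsto[\zeta]\in\PP^1=H_\infty$; moreover $dd^cw$, near $\{|z|=R\}$, becomes asymptotically the pullback of $dd^c\underline{\bf \rho_w}+\Omega$ from $\PP^1$. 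Substituting $u=\log|z|+\underline{\bf \rho_u}+o(1)$, etc., into $u\,d^cv-v\,d^cu$: the $\log|z|\,d^c\log|z|$ parts cancel; the remaining $\log|z|$-coefficient collapses to $\log R\cdot d^c(\underline{\bf \rho_v}-\underline{\bf \rho_u})$, whose integral over $\{|z|=R\}$ against $dd^cw$ is $\log R$ times the mixed mass of $dd^c(v-u)\wedge dd^cw$ outside radius $R$, hence $\to0$ by the $O(R^{-2})$ estimate; the terms containing only $d^c$ of Robin functions are pullbacks from $\PP^1$ of $3$-forms and therefore vanish; and the surviving term $(\underline{\bf \rho_u}-\underline{\bf \rho_v})\,d^c\log|z|\wedge dd^cw$ yields, via $dd^c\log|z|\to\Omega$, $dd^cw\to dd^c\underline{\bf \rho_w}+\Omega$ and the integral of $d^c\log|z|$ over the circle fibres of $\{|z|=R\}\to\PP^1$ producing the constant $2\pi$, exactly $2\pi\int_{\PP^1}(\underline{\bf \rho_u}-\underline{\bf \rho_v})\wedge(dd^c\underline{\bf \rho_w}+\Omega)$, as claimed.

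The main obstacle is the bookkeeping in this last step: establishing the $C^1$ (indeed $C^\infty$) convergence $u(R\zeta)-\log R\to\underline{\bf \rho_u}(\zeta)$ and, above all, the vanishing of the ``$\log R$ times escaping mass'' terms, which requires controlling the rate at which the mixed Monge--Amp\`ere masses of $L^+$ functions leave large balls. This is precisely why the reduction in the first step is made on $\PP^2$ and not on $\CC^2$: a naive $\CC^2$-mollification would leave $dd^cw$ uncontrolled up to $H_\infty$, whereas a smooth $\omega$-psh approximant on $\PP^2$ has $u-g$ smooth across the divisor and escaping masses $O(R^{-2})$. These facts -- the $\PP^2$-regularisation of bounded $\omega$-psh functions, the identification of the boundary values with the Robin functions, and the escaping-mass estimates -- are the technical content of \cite{BT}, and the role of the hypothesis $u,v,w\in L^+$ is exactly to make $u-g,v-g,w-g$ bounded on $\PP^2$ so that this machinery applies.
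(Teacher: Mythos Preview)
The paper does not prove this statement at all: it is simply quoted as Theorem~5.5 of \cite{BT} and then used as a black box to derive the $C$-Robin integral formula (\ref{riheq}) and Corollary~\ref{maincor}. So there is no ``paper's own proof'' to compare against.

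Your sketch is a reasonable outline of the Bedford--Taylor argument itself: regularise on $\PP^2$ so that $u-g,v-g,w-g$ become smooth $\omega$-psh, apply Stokes on large spheres, and identify the limiting boundary term with the Robin integral on $H_\infty\cong\PP^1$ via the Hopf fibration. You correctly flag the genuine technical content --- the $\PP^2$-regularisation, the identification of boundary values with Robin functions, and the control of escaping Monge--Amp\`ere mass --- and correctly attribute it to \cite{BT}. For the purposes of this paper, however, none of that is needed: the authors take the formula on faith and only manipulate its consequences.
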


We use this to develop an integral formula for $u,v,w\in L_{C,+}$. Letting 
$$ \zeta=(\zeta_1,\zeta_2)=F(z)=F(z_1,z_2)=(z_1^a,z_2^b), $$ 
we recall that for $u\in L_C$, we have 
\begin{equation}\label{raheq} \tilde u(z):=u(F(z_1,z_2))=u(\zeta) \in abL \ \hbox{and}\end{equation}
\begin{equation}\label{roheq} \rho_u(\zeta)=\rho_u(F(z_1,z_2))=ab{\bf \rho_{\tilde u/ab}}(z)\end{equation} 
where ${\bf \rho_{\tilde u/ab}}$ is the standard Robin function of $\tilde u/ab\in L$. It follows from the calculations in Remark \ref{KEY} that if $u\in L_{C,+}$ then $\tilde u \in abL^+$. From (\ref{raheq}), if $u,v,w\in L_{C,+}$,
$$\int_{\CC^2} (udd^cv-vdd^cu)\wedge dd^cw = \int_{\CC^2} (\tilde udd^c \tilde v-\tilde vdd^c\tilde u)\wedge dd^c\tilde w.$$
We apply Theorem \ref{bt5.5}  to the right-hand-side, multiplying by factors of $ab$ since $\tilde u, \tilde v, \tilde w\in abL^+$, to obtain the desired integral formula:
\begin{equation}\label{riheq} \int_{\CC^2} (udd^cv-vdd^cu)\wedge dd^cw = 2\pi (ab)^3 \int_{\PP^1} (\underline {\bf \rho_{\tilde u/ab}}-\underline {\bf \rho_{\tilde v/ab}})\wedge (dd^c \underline {\bf \rho_{\tilde w/ab}}+ \Omega).\end{equation}

\begin{corollary} \label{maincor} Let $u,v \in L_{C,+}$ with $u\geq v$. Then
$$\int_{\CC^2} u(dd^cv)^2\leq \int_{\CC^2} v(dd^cu)^2$$
$$+2\pi (ab)^3 \int_{\PP^1} (\underline {\bf \rho_{\tilde u/ab}}-\underline {\bf \rho_{\tilde v/ab}})\wedge [(dd^c \underline {\bf \rho_{\tilde u/ab}}+ \Omega)+(dd^c \underline {\bf \rho_{\tilde v/ab}}+ \Omega)].$$
\end{corollary}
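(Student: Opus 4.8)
The plan is to apply the integral formula (\ref{riheq}) twice---once with $w=u$ and once with $w=v$---and then add the two resulting identities. Since $u,v\in L_{C,+}$, both choices $w=u$ and $w=v$ are admissible in (\ref{riheq}). Taking $w=u$ gives
$$\int_{\CC^2} (u\,dd^cv - v\,dd^cu)\wedge dd^cu = 2\pi (ab)^3 \int_{\PP^1} (\underline {\bf \rho_{\tilde u/ab}}-\underline {\bf \rho_{\tilde v/ab}})\wedge (dd^c \underline {\bf \rho_{\tilde u/ab}}+ \Omega),$$
and taking $w=v$ gives
$$\int_{\CC^2} (u\,dd^cv - v\,dd^cu)\wedge dd^cv = 2\pi (ab)^3 \int_{\PP^1} (\underline {\bf \rho_{\tilde u/ab}}-\underline {\bf \rho_{\tilde v/ab}})\wedge (dd^c \underline {\bf \rho_{\tilde v/ab}}+ \Omega).$$
Adding these and collecting the common factor $(\underline {\bf \rho_{\tilde u/ab}}-\underline {\bf \rho_{\tilde v/ab}})$ on the right produces exactly the $\PP^1$--integral appearing in the statement, so it remains only to rewrite the left-hand side.

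Next I would expand the wedge product on the left. Writing $(u\,dd^cv - v\,dd^cu)\wedge(dd^cu+dd^cv)$ as four terms and using $dd^cu\wedge dd^cv = dd^cv\wedge dd^cu$ as (positive) measures, the sum becomes
$$\int_{\CC^2} u(dd^cv)^2 - \int_{\CC^2} v(dd^cu)^2 + \int_{\CC^2} (u-v)\,dd^cu\wedge dd^cv,$$
where the two mixed integrals $\int u\,dd^cv\wedge dd^cu$ and $\int v\,dd^cu\wedge dd^cv$ have been combined into the single term $\int (u-v)\,dd^cu\wedge dd^cv$. All of these integrals are finite: $u-v=O(1)$ on $\CC^2$ because $u,v\in L_C$, and the mixed Monge--Amp\`ere measures involved are precisely the ones entering the well-defined mutual energy $\mathcal E(u,v)$ of (\ref{relendef}).

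Finally I would invoke the hypothesis $u\ge v$: since $u-v\ge 0$ everywhere and $dd^cu\wedge dd^cv$ is a positive measure (Bedford--Taylor, $u,v$ being locally bounded), the term $\int_{\CC^2}(u-v)\,dd^cu\wedge dd^cv$ is nonnegative, and dropping it from the identity turns it into the asserted inequality. The whole argument is essentially bookkeeping once (\ref{riheq}) is available; the only steps requiring a little care are the algebraic rearrangement that merges the two mixed terms and the verification that every integral in sight converges, and I anticipate no serious obstacle beyond that.
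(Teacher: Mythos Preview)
Your proof is correct and follows essentially the same route as the paper: the paper applies (\ref{riheq}) with $w=u+v$ (which, by linearity in $dd^cw$, is the same as your two applications with $w=u$ and $w=v$ summed), then uses the identical algebraic identity $u(dd^cv)^2 - v(dd^cu)^2 = (u\,dd^cv - v\,dd^cu)\wedge dd^c(u+v) + (v-u)\,dd^cu\wedge dd^cv$ and drops the nonpositive mixed term.
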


\begin{proof} 
From (\ref{riheq})
$$\int_{\CC^2} (udd^cv-vdd^cu)\wedge dd^c(u+v)$$
$$=2\pi (ab)^3 \int_{\PP^1} (\underline {\bf \rho_{\tilde u/ab}}-\underline {\bf \rho_{\tilde v/ab}})\wedge [(dd^c \underline {\bf \rho_{\tilde u/ab}}+ \Omega)+(dd^c \underline {\bf \rho_{\tilde v/ab}}+ \Omega)].$$
We observe that
$$u(dd^cv)^2- v(dd^cu)^2= (udd^cv-vdd^cu)\wedge dd^c(u+v)+(v-u)dd^cu\wedge dd^cv.$$
Using this and the hypothesis $u\geq v$ gives the result.
\end{proof}

We also obtain a generalization of Theorem 6.9 of of \cite{BT}:

\begin{corollary} \label{bt6.9} Let $E,F$ be nonpluripolar compact subsets of $\CC^2$ with $E\subset F$. We have $\rho_{C,E}=\rho_{C,F}$ if and only if $V_{C,E}^*=V_{C,F}^*$ and $\hat E =\hat F \setminus P$ where $P$ is pluripolar.
\end{corollary}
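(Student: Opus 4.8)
The plan is to mirror the structure of Bedford--Taylor's Theorem 6.9 in \cite{BT}, transporting the problem to the class $abL^+(\CC^2)$ via the map $F(z_1,z_2)=(z_1^a,z_2^b)$ and then invoking the integral formula (\ref{riheq}) together with the global domination principle (Proposition \ref{gctp}).

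For the forward direction, suppose $\rho_{C,E}=\rho_{C,F}$. First I would note the trivial direction of the Robin-function story: since $E\subset F$ we have $V_{C,F}^*\leq V_{C,E}^*$, hence $\rho_{C,F}\leq\rho_{C,E}$ automatically, so the hypothesis is really an equality forced from a one-sided bound. Apply Corollary \ref{maincor} with $u=V_{C,E}^*$ and $v=V_{C,F}^*$ (note $u\geq v$, both in $L_{C,+}$): the boundary integral on $\PP^1$ involves $\underline{\bf\rho_{\tilde u/ab}}-\underline{\bf\rho_{\tilde v/ab}}$, which vanishes identically because $\rho_{C,E}=\rho_{C,F}$ (using the translation formulas (\ref{raheq}), (\ref{roheq}) relating $\rho_{C,\bullet}$ to the standard Robin function of the pullback). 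Hence $\int_{\CC^2}V_{C,E}^*(dd^cV_{C,F}^*)^2\leq\int_{\CC^2}V_{C,F}^*(dd^cV_{C,E}^*)^2$. Now $V_{C,F}^*=0$ q.e.\ on $\mathrm{supp}(dd^cV_{C,E}^*)^2\subset\hat E$ (since $V_{C,E}^*=0$ there and $0\leq V_{C,F}^*\leq V_{C,E}^*$), so the right-hand side is $0$; thus $\int_{\CC^2}V_{C,E}^*(dd^cV_{C,F}^*)^2=0$, forcing $V_{C,E}^*=0$ q.e.\ (hence a.e.-$(dd^cV_{C,F}^*)^2$) on $\mathrm{supp}(dd^cV_{C,F}^*)^2$. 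Then the domination principle applied to $V_{C,E}^*\leq V_{C,F}^*$ (since $V_{C,E}^*=0\leq V_{C,F}^*$ a.e.-$(dd^cV_{C,F}^*)^2$ and $V_{C,E}^*\in L_C$, $V_{C,F}^*\in L_{C,+}$) gives $V_{C,E}^*\leq V_{C,F}^*$, hence equality. Once $V_{C,E}^*=V_{C,F}^*=:V$, the coincidence set $\{V=0\}$ equals both $\hat E$ and $\hat F$ \emph{up to pluripolar sets}: indeed $V_{C,E}^*=0$ precisely on $\hat E$ away from the pluripolar set $\{V_{C,E}=0\}\setminus\{V_{C,E}^*=0\}$ (by the Siciak--Zaharjuta description and non-pluripolarity, as in Section 3), and likewise for $F$; comparing gives $\hat E=\hat F\setminus P$ with $P$ pluripolar.

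For the converse, assume $V_{C,E}^*=V_{C,F}^*$ and $\hat E=\hat F\setminus P$ with $P$ pluripolar. Then $\rho_{C,E}=\rho_{V_{C,E}^*}=\rho_{V_{C,F}^*}=\rho_{C,F}$ is immediate from Definition \ref{crobtri}, since the $C$-Robin function depends only on the function, not on which compact set produced it. (The pluripolar-set condition on the hulls is what makes this an ``if and only if'' in the sharp form; without it one would still get $\rho_{C,E}=\rho_{C,F}$, so the converse is the easier half and $\hat E=\hat F\setminus P$ is recorded as extra structural information consistent with $V_{C,E}^*=V_{C,F}^*$.)

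The main obstacle I anticipate is the careful bookkeeping in the forward direction at two points: first, verifying that the $\PP^1$-boundary integral in Corollary \ref{maincor} genuinely vanishes --- this needs the identity $\underline{\bf\rho_u}(z)={\bf\rho_u}(z)-\log|z|$ transported correctly through the $ab$-scaling of (\ref{roheq}), so that $\rho_{C,E}=\rho_{C,F}$ on $\CC^2$ (equivalently on $\partial P^2$, by Remark \ref{rem43}) translates to equality of the underlined Robin functions on $\PP^1$; and second, the passage from ``$V_{C,E}^*=0$ q.e.\ on $\mathrm{supp}(dd^cV_{C,F}^*)^2$'' to ``a.e.-$(dd^cV_{C,F}^*)^2$'', which uses that $(dd^cV_{C,F}^*)^2$ puts no mass on pluripolar sets. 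Both are standard in the $C=\Sigma$ case treated in \cite{BT} and \cite{BloomLev}; the only real content here is checking that the transfer map $F$ and the $C$-pluripotential machinery of Sections 2--6 make the same arguments go through verbatim.
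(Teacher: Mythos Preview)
Your proposal is correct and follows essentially the same route as the paper: apply Corollary \ref{maincor} with $u=V_{C,E}^*$, $v=V_{C,F}^*$, use $\rho_{C,E}=\rho_{C,F}$ (via (\ref{roheq})) to kill the $\PP^1$-boundary term, conclude $\int V_{C,E}^*(dd^cV_{C,F}^*)^2=0$ from $\int V_{C,F}^*(dd^cV_{C,E}^*)^2=0$, and finish with Proposition \ref{gctp}; the converse is immediate. The only cosmetic difference is that the paper first reduces to $E=\hat E$, $F=\hat F$, which slightly streamlines the final identification of $\hat E$ with $\hat F\setminus P$.
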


\begin{proof} The ``if'' direction is obvious. For ``only if'' we may assume $E=\hat E$ and $F=\hat F$ since $V_{C,K}^*=V_{C,\hat K}^*$ for $K$ compact. It suffices to show $V_{C,E}^*\leq V_{C,F}^*$ as $E\subset F$ gives the reverse inequality. We have
$$0\leq  \int_{\CC^2}V_{C,E}^*(dd^cV_{C,F}^*)^2= \int_{\CC^2} [V_{C,E}^*(dd^cV_{C,F}^*)^2- V_{C,F}^*(dd^cV_{C,E}^*)^2]$$
since $V_{C,F}^*=0$ q.e. on $F$ (and hence a.e-$(dd^cV_{C,E}^*)^2$). Applying Corollary \ref{maincor} with $u=V_{C,E}^*$ and $v= V_{C,F}^*$, the right-hand-side of the displayed inequality is nonpositive since $\rho_{C,E}=\rho_{C,F}$ implies ${\bf \rho_{\tilde V_{C,E}^*/ab}}={\bf \rho_{\tilde V_{C,F}^*/ab}}$ on $\CC^2$ by (\ref{roheq}) so that $\underline {\bf \rho_{\tilde V_{C,E}^*/ab}}=\underline {\bf \rho_{\tilde V_{C,F}^*/ab}}$ on $\PP^1$. Hence $\int_{\CC^2}V_{C,E}^*(dd^cV_{C,F}^*)^2=0$. We conclude that $V_{C,E}^*=0$ a.e-$(dd^cV_{C,F}^*)^2$. By Proposition \ref{gctp}, $V_{C,E}^*\leq V_{C,F}^*$. Then $\hat E =\hat F \setminus P$ follows since $\{z\in \CC^2: V_{C,E}^*=0\}$ differs from $E=\hat E$ by a pluripolar set. 

\end{proof}


Again using Corollary \ref{maincor} and Proposition \ref{gctp}, we get an analogue of Lemma 2.1 in \cite{Bapp}, which is the key result for all that follows.

\begin{corollary} \label{keyresult} Let $K\subset \CC^2$ be compact and nonpluripolar and let $v\in L_C$ with $v\leq 0$ on $K$. Suppose that $\rho_v=\rho_{C,K}$ on $\partial P^2$. Then $v=V_{C,K}^*$ on $\CC^2\setminus \hat K$.
\end{corollary}

\begin{proof} Fix a constant $c$ so that $H_C(z) < c$ on $K$ and let 
$$w:=\max[v,0,H_C-c].$$
Then $w\in L_{C,+}$ with $w=0$ on $\hat K$ and since $H_C-c\leq V_{C,K}$ we have $\rho_w =\rho_{C,K}$ on $\partial P^2$. Then $\rho_w =\rho_{C,K}$ on $\CC^2$ (see Remark 4.4) and by (\ref{roheq}), ${\bf \rho_{\tilde w/ab}} ={\bf \rho_{\tilde V_{C,K}/ab}}$ on $\CC^2$. Thus $\underline {\bf \rho_{\tilde w/ab}} =\underline {\bf \rho_{\tilde V_{C,K}/ab}}$ on $\PP^1$. Since $w\leq V_{C,K}^*$, by Corollary \ref{maincor},
$$\int_{\CC^2} V_{C,K}^*(dd^cw)^2\leq \int_{\CC^2} w (dd^cV_{C,K}^*)^2=0,$$
the last equality due to $w=0$ on supp$(dd^cV_{C,K}^*)^2$. Thus $V_{C,K}^*=0$ a.e.-$(dd^cw)^2$ and hence $V_{C,K}^*\leq w$ a.e.-$(dd^cw)^2$. By Proposition \ref{gctp}, $V_{C,K}^*\leq w$ on all of $\CC^2$. Since $V_{C,K}^*\geq H_C-c$, $v=V_{C,K}^*$ on $\CC^2\setminus \hat K$.

\end{proof}

As in Theorem 2.1 in \cite{Bapp}, we get a sufficient condition for a sequence of polynomials to recover the $C-$extremal function of $K$ outside of $\hat K$. This will be used in section 8.

\begin{theorem} \label{thm64} Let $K\subset \CC^2$ be compact and nonpluripolar. Let $\{p_j\}$ be a sequence of polynomials, $p_j\in Poly(d_jC)$, with $\deg_C(p_j)=d_j$ such that
$$\limsup_{j\to \infty} ||p_j||_K^{1/d_j}=1 \ \hbox{and}$$
\begin{equation} \label{userob} \bigl(\limsup_{j\to \infty} \frac{1}{d_j}\log |\hat p_j(\zeta)|\bigr)^*=\rho_{C,K}(\zeta), \ \zeta \in \partial P^2.\end{equation}
Then
$$\bigl(\limsup_{j\to \infty} \frac{1}{d_j}\log |p_j(z)|\bigr)^*=V_{C,K}^*(z), \ z \in \CC^2 \setminus \hat K.$$
\end{theorem}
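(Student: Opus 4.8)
The plan is to deduce Theorem~\ref{thm64} from Corollary~\ref{keyresult} by producing a single limiting function $v\in L_C$ that competes for $V_{C,K}$ and has the right $C$-Robin function. First I would set
$$v(z):=\bigl(\limsup_{j\to\infty}\tfrac{1}{d_j}\log|p_j(z)|\bigr)^*,$$
the usc regularization of the upper envelope of the subextremal functions $u_j:=\tfrac{1}{d_j}\log|p_j|$. Each $u_j\in L_C$ since $p_j\in Poly(d_jC)$ with $\deg_C p_j=d_j$, and the Bernstein--Markov--type hypothesis $\limsup_j\|p_j\|_K^{1/d_j}=1$ shows that any sequence $u_j-\epsilon$ is eventually $\le 0$ on $K$; passing to the regularized upper limit and then letting $\epsilon\to 0$ gives $v\le 0$ on $K$. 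One must check $v\in L_C$: the $u_j$ are uniformly bounded above by $H_C+O(1)$ on compacta (indeed $u_j\le V_{C,K_R}$ for a large ball $K_R\supset K$ after the normalization), so the family $\{u_j\}$ is locally uniformly bounded above, $\limsup_j u_j$ is therefore not identically $-\infty$, and its usc regularization $v$ is psh and lies in $L_C$ by the standard comparison with $H_C$.

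The crux is then to identify $\rho_v$ with $\rho_{C,K}$ on $\partial P^2$. Here I would use Remark~\ref{foursix}: for $p_j=h_{d_j}+r_{d_j}\in Poly(d_jC)$ with leading part $\hat p_j=h_{d_j}=\tilde h_{d_jab}\in H_{d_j}(C)$, the $C$-Robin function of $u_j$ is exactly $\rho_{u_j}=\tfrac{1}{d_j}\log|\hat p_j|$. So the hypothesis (\ref{userob}) reads $\bigl(\limsup_j\rho_{u_j}(\zeta)\bigr)^*=\rho_{C,K}(\zeta)$ on $\partial P^2$. The task is to commute the usc-regularized $\limsup$ with the $C$-Robin operator, i.e.\ to show
$$\rho_v(\zeta)=\Bigl(\limsup_{j\to\infty}\rho_{u_j}(\zeta)\Bigr)^*\quad\text{on }\partial P^2.$$
I would do this through the $\CC^3$ lift of Proposition~\ref{prop:1.2}/(\ref{pshofrho}): associate to each $u_j$ the $ab$-log-homogeneous $h_j\in L_{\tilde C}(\CC^3)$ with $h_j(1,z)=u_j(z)$ (which, because $u_j=\tfrac1{d_j}\log|p_j|$, is literally $\tfrac{1}{d_j}\log|\hat{p_j^{\mathrm{hom}}}|$ for the homogenization of $p_j$ in $Poly(d_j\tilde C)$), and similarly lift $v$ to $h_v$ on $\CC^3$. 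Then $\rho_{u_j}(\zeta)=h_j(0,\zeta)$ and $\rho_v(\zeta)=h_v(0,\zeta)$ by (\ref{pshofrho}). The regularized $\limsup$ of the psh functions $h_j$ on $\CC^3$ is a psh function which on the slice $t=1$ equals $v$ and which, being $ab$-log-homogeneous as a limit of such, must therefore coincide with $h_v$; evaluating at $t=0$ gives $\rho_v(\zeta)=(\limsup_j h_j(0,\zeta))^*=(\limsup_j\rho_{u_j}(\zeta))^*=\rho_{C,K}(\zeta)$ on $\partial P^2$. Alternatively, and perhaps more cleanly, one can avoid the lift by invoking the relation in Remark~\ref{KEY}: pulling everything back by $F(z_1,z_2)=(z_1^a,z_2^b)$ turns $u_j$ into $\tilde u_j\in abL$, turns $v$ into $\tilde v=(\limsup_j \tilde u_j)^*\in abL$, and turns $\rho_{u_j},\rho_v$ into $ab$ times the \emph{standard} Robin functions of $\tilde u_j/ab,\tilde v/ab$; the needed interchange of regularized $\limsup$ with the standard Robin functional on $L$ is exactly the ingredient Bloom uses in \cite{Bapp} (it follows from Hartogs' lemma together with the convexity-in-$\log r$ description of $\rho$), so one reduces to a known fact.

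With $\rho_v=\rho_{C,K}$ on $\partial P^2$ in hand, and $v\in L_C$ with $v\le 0$ on $K$, Corollary~\ref{keyresult} applies verbatim and yields $v=V_{C,K}^*$ on $\CC^2\setminus\hat K$. Unwinding the definition of $v$, this is precisely the asserted conclusion
$$\bigl(\limsup_{j\to\infty}\tfrac{1}{d_j}\log|p_j(z)|\bigr)^*=V_{C,K}^*(z),\qquad z\in\CC^2\setminus\hat K.$$
The main obstacle I anticipate is the commutation step $\rho_v=(\limsup_j\rho_{u_j})^*$ on $\partial P^2$: one has the trivial inequality $\limsup_j\rho_{u_j}\le\rho_v$ (a regularized $\limsup$ of psh functions dominates each term's asymptotics, and $H_C$-asymptotics pass to the limit), but the reverse inequality — that the leading homogeneous parts $\hat p_j$ control $\rho_v$ from above — is exactly where one must exploit the $C$-homogeneity rigidly, either via the $\CC^3$ lift or via the $F$-pullback to the classical Lelong class where Bloom's argument is available. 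Everything else (membership $v\in L_C$, the normalization forcing $v\le 0$ on $K$, and the final invocation of Corollary~\ref{keyresult}) is routine pluripotential-theoretic bookkeeping.
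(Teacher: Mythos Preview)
Your strategy is the paper's: define $v=(\limsup_j \tfrac{1}{d_j}\log|p_j|)^*$, verify $v\in L_C$ with $v\le V_{C,K}^*$, identify $\rho_v=\rho_{C,K}$ on $\partial P^2$, and apply Corollary~\ref{keyresult}.

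One correction: in your last paragraph you have the two halves of the Robin identification reversed. The inequality $\rho_v\le\rho_{C,K}$ is the \emph{trivial} one, since $v\le V_{C,K}^*$ immediately yields it by monotonicity of the $C$-Robin map; it is the inequality $(\limsup_j\rho_{u_j})^*\le\rho_v$, i.e.\ $\rho_{C,K}\le\rho_v$, that requires work. The paper proves this directly (no lift, no pullback) by the convexity-in-$\log r$ identity (\ref{uinabl}) together with Hartogs' lemma on circles $|\lambda|=r$: for every $r$,
\[
\tfrac{1}{d_j}\log|\hat p_j(\zeta)|\ \le\ \max_{|\lambda|=r}\tfrac{1}{d_j}\log|p_j(\lambda\circ\zeta)|-ab\log r,
\]
so taking $\limsup_j$ and then $r\to\infty$ gives $\limsup_j\tfrac{1}{d_j}\log|\hat p_j(\zeta)|\le\rho_v(\zeta)$. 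Your ``$F$-pullback to Bloom'' alternative is exactly this argument.

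Your $\CC^3$-lift route also works, but note that it only delivers the inequality $\rho_v\ge(\limsup_j\rho_{u_j})^*$, not the full commutation you claim: the usc regularization of $\limsup_j h_j$ taken in the $(t,z)$-variables, evaluated at $t=0$, can in principle exceed the usc regularization in $\zeta$ alone of $\limsup_j h_j(0,\zeta)$. Since only this one inequality is needed (the other comes from $v\le V_{C,K}^*$), the argument still closes.
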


\begin{remark} Given an orthonormal basis $\{b_j\}$ of $\bigcup_n Poly(nC)$ in $L^2(\mu)$ where 
$\mu$ is a Bernstein-Markov measure for $K$, using 
$$\limsup_{j \to \infty} \tfrac{1}{\deg_C(b_j)}\log|b_j(z)|= V_{C,K}(z), \ z\not\in \hat K$$
from Corollary \ref{bmext}, in the proof of Proposition \ref{niv} we showed 
$$\limsup_{j \to \infty} \tfrac{1}{\deg_C(b_j)}\log|\hat b_j(\zeta)|= \rho_{C,K}(\zeta) \ \hbox{for} \ \zeta \in \partial P^2.$$
Theorem \ref{thm64} is a type of reverse implication.

\end{remark}

\begin{proof} The function
$$v(z):=\bigl(\limsup_{j\to \infty} \frac{1}{d_j}\log |p_j(z)|\bigr)^*$$
is psh in $\CC^2$. Given $\epsilon >0$, 
$$\frac{1}{d_j}\log |p_j(z)|\leq \epsilon, \ z\in K, \ j\geq j_0(\epsilon).$$
Thus 
$$\frac{1}{d_j}\log |p_j(z)|\leq V_{C,K}(z)+\epsilon, \ z\in \CC^2, \ j\geq j_0(\epsilon).$$
We conclude that $v\in L_C$ and $v\leq V_{C,K}$. Hence $\rho_v\leq \rho_{C,K}$. 

From Corollary \ref{keyresult}, to show $v= V_{C,K}^*$ outside $\hat K$ it suffices to show $\rho_v\geq \rho_{C,K}$ on $\partial P^2$. We use the argument from Proposition \ref{niv}. Recall from (\ref{uinabl}) for $u\in abL(\CC)$ we have
$$\limsup_{|t|\to \infty} [u(t)-ab\log |t|]=\inf_r\bigl(\max_{|t|=r}u(t)-ab\log r\bigr).$$ 
Fix $\zeta \in \partial P^2$ and apply the above to the function 
$$\lambda \to \frac{1}{d_j}\log |p_j(\lambda \circ \zeta)|=\frac{1}{d_j}\log |p_j(\lambda^a\zeta_1, \lambda^b \zeta_2)|.$$
We see that for any $r$
$$\frac{1}{d_j}\log |\hat p_j(\zeta)|=\limsup_{|\lambda|\to \infty} [ \frac{1}{d_j}\log |p_j(\lambda \circ \zeta)|-ab\log |\lambda|]$$
$$\leq \max_{|\lambda|=r} \frac{1}{d_j}\log |p_j(\lambda \circ \zeta)| -ab\log r.$$
Thus
$$\limsup_{j\to \infty} \frac{1}{d_j}\log |\hat p_j(\zeta)|\leq 
\limsup_{j\to \infty} \bigl(\max_{|\lambda|=r} \frac{1}{d_j}\log |p_j(\lambda \circ \zeta)| -ab\log r\bigr)$$
$$\leq \max_{|\lambda|=r} \bigl(\limsup_{j\to \infty}\frac{1}{d_j}\log |p_j(\lambda \circ \zeta)| -ab\log r\bigr)=\max_{|\lambda|=r} [v(\lambda \circ \zeta)-ab\log r]$$
where we used Hartogs lemma. Thus, letting $r\to \infty$,
$$\limsup_{j\to \infty} \frac{1}{d_j}\log |\hat p_j(\zeta)|\leq \rho_v(\zeta).$$
Since $\rho_v$ is usc, $\bigl(\limsup_{j\to \infty} \frac{1}{d_j}\log |\hat p_j(\zeta)|\bigr)^*\leq \rho_v(\zeta)$ and using the hypothesis (\ref{userob}) finishes the proof.

\end{proof}

\section{$C-$transfinite diameter and directional Chebyshev constants} From \cite{Sione}, we have a Zaharjuta-type proof of the existence of the limit 
\begin{equation} \label{tdlim}\delta_C(K):= \limsup_{n\to \infty}V_{n}^{1/l_n}\end{equation}
(see section 2) in the definition of $C-$transfinite diameter $\delta_C(K)$ of a compact set $K\subset \CC^d$ where $C$ satisfies (\ref{sigmainkp}). In the classical ($C=\Sigma$) case, Zaharjuta \cite{Zah} verified the existence of the limit in (\ref{tdlim}) by introducing directional Chebyshev constants $\tau(K,\theta)$ and proving 
$$\delta_{\Sigma}(K)=\exp \bigl(\frac{1}{|\sigma|}\int_{\sigma^0} \log \tau(K,\theta)dm(\theta)\bigr)$$
where $\sigma:=\{(x_1,...,x_d)\in \RR^d: 0\leq x_i \leq 1, \ \sum_{j=1}^d x_i = 1\}$ is the extreme ``face'' of $\Sigma$; $\sigma^0=\{(x_1,...,x_d)\in \RR^d: 0 < x_i <1, \ \sum_{j=1}^d x_i = 1\}$; and $|\sigma|$ is the $(d-1)-$dimensional measure of $\sigma$. 

In \cite{Sione}, a slight difference with the classical setting is that we have
\begin{equation}\label{zahtype} \delta_C(K)=\exp \bigl(\frac{1}{vol(C)}\int_C \log \tau(K,\theta)dm(\theta)\bigr) \end{equation}
where the directional Chebyshev constants $\tau(K,\theta)$ and the integration in the formula are over the entire $d-$dimensional convex body $C$. Moreover in the definition of $\tau(K,\theta)$ the standard grevlex ordering $\prec$ on $(\ZZ^+)^d$ (i.e., on the monomials in $\CC^d$) was used. This was required to obtain the submultiplicativity of the ``monic'' polynomial classes 
\begin{equation} \label{monicclass} M_k(\alpha):= \{p\in Poly(kC): p(z)=z^{\alpha} +\sum_{\beta \prec \alpha} c_{\beta}z^{\beta}\}\end{equation}
and corresponding Chebyshev constants
$$T_k(K,\alpha):=\inf \{||p||_K:p\in M_k(\alpha)\}^{1/k}.$$
However, in our triangle setting, following \cite{Sione} we can also define an ordering $\prec_C$ on $(\ZZ^+)^2$ by $\alpha =(\alpha_1,\alpha_2) \prec_C \beta= (\beta_1,\beta_2)$ if 
\begin{enumerate}
\item $\deg_C(z^{\alpha})< \deg_C(z^{\beta})$ or 
\item when $\deg_C(z^{\alpha})= \deg_C(z^{\beta})$ we have $\alpha_2 < \beta_2$.
\end{enumerate}
Then 
\begin{enumerate}
\item one has submultiplicativity of the corresponding ``monic'' polynomial classes defined as in (\ref{monicclass}) using $\prec_C$ (which we denote $M^{\prec_C}_k(\alpha)$) and one gets the formula (\ref{zahtype}) with $\theta \to \tau(K,\theta)$ continuous; and 
\item if $\phi=(\phi_1,\phi_2)$ is on the open hypotenuse $\mathcal C$ of $C$; i.e., $a\phi_1 +b\phi_2=ab$ with $\phi_1 \phi_2 >0$, and if $\phi = r\theta$ where $\theta$ lies on the interior of $C$ and $r>1$, then $r\log \tau(K,\theta)=\log \tau(K,\phi)$ (see \cite{SDRNA}, Lemma 5.4). (Note $\mathcal C=\sigma^0$ if $C=\Sigma\subset (\RR^+)^2$).
\end{enumerate}
As a consequence, in our triangle case $C=co\{(0,0),(b,0),(0,a)\}$,
\begin{equation}\label{zahtype2} \delta_C(K)=\exp \bigl(\frac{1}{\sqrt {a^2+b^2}}\int_{\mathcal C} \log \tau(K,\theta)d \theta \bigr) \end{equation}
where the directional Chebyshev constants $\tau(K,\theta)$ in (\ref{zahtype2}) and the integration in the formula are over $\mathcal C$ and 
$\theta \to \tau(K,\theta)$ is continuous on $\mathcal C$. In what follows, we fix our triangle $C$ and use this $\prec_C$ ordering to define these directional Chebyshev constants 
$$\tau(K,\theta):=\lim_{k\to \infty, \ \alpha/k\to \theta} T^C_k(K,\alpha)$$
for $\theta \in \mathcal C$ (in which case the limit exists) where
$$T^C_k(K,\alpha):=\inf \{||p||_K:p(z)=z^{\alpha} +\sum_{\beta \prec_C \alpha} c_{\beta}z^{\beta}\in M^{\prec_C}_k(\alpha)\}^{1/k}.$$

Using (\ref{zahtype2}) we have a result similar to Proposition 3.1 in \cite{Bfam}:

\begin{proposition} \label{prop71} For $E\subset F$ compact subsets of $\CC^2$, 
\begin{enumerate}
\item for all $\theta \in \mathcal C$, $\tau(E,\theta)\leq \tau(F,\theta)$ and 
\item $\delta_C(E)=\delta_C(F)$ if and only if $\tau(E,\theta)= \tau(F,\theta)$ for all $\theta \in \mathcal C$.
\end{enumerate}

\end{proposition}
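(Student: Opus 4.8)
The plan is to prove part (1) first, then use it together with the integral formula (\ref{zahtype2}) to establish part (2). For part (1): since $E \subset F$, any polynomial $p(z) = z^\alpha + \sum_{\beta \prec_C \alpha} c_\beta z^\beta \in M^{\prec_C}_k(\alpha)$ satisfies $\|p\|_E \leq \|p\|_F$. Taking the infimum over all such $p$ gives $T^C_k(E,\alpha) \leq T^C_k(F,\alpha)$ for every admissible $(k,\alpha)$, and passing to the limit along $\alpha/k \to \theta$ (which exists for $\theta \in \mathcal{C}$ by the discussion preceding the proposition) yields $\tau(E,\theta) \leq \tau(F,\theta)$.

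For part (2), the ``if'' direction is immediate from (\ref{zahtype2}): if $\tau(E,\theta) = \tau(F,\theta)$ for all $\theta \in \mathcal{C}$, then the integrands agree and $\delta_C(E) = \delta_C(F)$. For the ``only if'' direction, suppose $\delta_C(E) = \delta_C(F)$. By (\ref{zahtype2}) this gives
$$\frac{1}{\sqrt{a^2+b^2}}\int_{\mathcal C} \log \tau(E,\theta)\, d\theta = \frac{1}{\sqrt{a^2+b^2}}\int_{\mathcal C} \log \tau(F,\theta)\, d\theta,$$
hence $\int_{\mathcal{C}} \bigl(\log\tau(F,\theta) - \log\tau(E,\theta)\bigr)\, d\theta = 0$. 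By part (1) the integrand is nonnegative on $\mathcal{C}$, and by the continuity of $\theta \mapsto \tau(K,\theta)$ on $\mathcal{C}$ noted in (\ref{zahtype2}) the integrand is continuous; a nonnegative continuous function with zero integral over $\mathcal{C}$ must vanish identically, so $\tau(E,\theta) = \tau(F,\theta)$ for all $\theta \in \mathcal{C}$.

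The main point requiring care is ensuring that the hypotheses behind formula (\ref{zahtype2})—in particular the continuity of $\theta \mapsto \tau(K,\theta)$ on the open hypotenuse $\mathcal{C}$, which relies on using the $\prec_C$ ordering and the submultiplicativity of the classes $M^{\prec_C}_k(\alpha)$—are in force for both $E$ and $F$; these are exactly the facts imported from \cite{Sione} and \cite{SDRNA} and recalled above, so no new work is needed there. The only genuine subtlety is that $\tau(K,\theta)$ could a priori be $0$, making $\log\tau$ equal to $-\infty$; but $E$ and $F$ are tacitly nondegenerate (the transfinite diameter formula presumes $\delta_C$ is finite and the $\tau$-integrals converge), so this does not arise, and the comparison and vanishing arguments go through as stated.
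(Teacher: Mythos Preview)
Your proof is correct and is exactly the argument the paper has in mind: the paper does not write out a proof but simply states that the result follows from the integral formula (\ref{zahtype2}) in analogy with Proposition~3.1 of \cite{Bfam}, and your monotonicity-plus-continuity argument is precisely how that analogy is cashed out. Your caveat about $\tau(K,\theta)=0$ is fair but harmless in context, since the only application (Theorem~\ref{mainthm}) is to regular, hence nonpluripolar, sets.
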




\section{Polynomials approximating $V_{C,K}$} Following \cite{Bfam}, given a nonpluripolar compact set $K\subset \CC^2$ and $\theta \in \mathcal C$, a sequence of polynomials $\{Q_n\}$ is {\it $\theta-$asymptotically Chebyshev} (we write $\theta aT$) for $K$ if  
\begin{enumerate}
\item for each $n$ there exists $k_n \in \ZZ^+$ and $\alpha_n$ with $Q_n \in M^{\prec_C}_{k_n}(\alpha_n)$;
\item $\lim_{n\to \infty} k_n=+\infty$ and $\lim_{n\to \infty} \frac{\alpha_n}{k_n}=\theta$; and
\item $\lim_{n\to \infty} ||Q_n||_K^{1/k_n} =\tau (K,\theta).$

\end{enumerate}

\begin{proposition} \label{bloom3.2} Let $K\subset \CC^2$ be compact and nonpluripolar and satisfy $ e^{i\theta}\circ K =K$. Let $\{Q_n\}$ be $\theta aT$ for $K$.Then $\{\hat Q_n\}$ is $\theta aT$ for $K$.
\end{proposition}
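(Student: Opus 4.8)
The plan is to decompose each $Q_n \in M^{\prec_C}_{k_n}(\alpha_n)$ into its $C$-homogeneous pieces and exploit the estimate (\ref{hom}) from the proof of Theorem \ref{circlek}. Writing $Q_n = \sum_{l=0}^{k_n ab} \tilde h_l^{(n)}$ as in (\ref{polyhom}), where $\tilde h_l^{(n)}(\lambda \circ z) = \lambda^l \tilde h_l^{(n)}(z)$, the hypothesis $e^{i\theta}\circ K = K$ gives $\|\tilde h_l^{(n)}\|_K \leq \|Q_n\|_K$ for every $l$, via the Cauchy-estimate argument already used to prove (\ref{hom}). In particular the top piece $\hat Q_n = \tilde h_{k_n ab}^{(n)}$ satisfies $\|\hat Q_n\|_K \leq \|Q_n\|_K$, so
$$\limsup_{n\to\infty} \|\hat Q_n\|_K^{1/k_n} \leq \lim_{n\to\infty}\|Q_n\|_K^{1/k_n} = \tau(K,\theta).$$

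Next I would check that $\hat Q_n$ still lies in the monic class $M^{\prec_C}_{k_n}(\alpha_n)$. Because $\theta = \lim \alpha_n/k_n \in \mathcal C$ lies on the open hypotenuse, $\deg_C(z^{\alpha_n}) = k_n$ (at least for $n$ large, after the standard adjustment that $\alpha_n/k_n \to \theta$ forces $a\alpha_{n,1}+b\alpha_{n,2}$ close to $k_n ab$), so the leading monomial $z^{\alpha_n}$ of $Q_n$ is itself $C$-homogeneous of top degree and hence survives in $\hat Q_n$; and every monomial $z^\beta$ with $\beta \prec_C \alpha_n$ and $\deg_C(z^\beta) = k_n$ also has $a\beta_1 + b\beta_2 = k_n ab$, so those terms too are retained, while the lower-degree terms are discarded. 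Thus $\hat Q_n = z^{\alpha_n} + \sum_{\beta \prec_C \alpha_n} c'_\beta z^\beta \in M^{\prec_C}_{k_n}(\alpha_n)$, and conditions (1) and (2) in the definition of $\theta aT$ hold for $\{\hat Q_n\}$ with the same $k_n, \alpha_n$.

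It remains to upgrade the inequality $\limsup \|\hat Q_n\|_K^{1/k_n} \leq \tau(K,\theta)$ to an equality, i.e.\ condition (3). This is where the lower bound comes from the very definition of $\tau(K,\theta)$: since $\hat Q_n \in M^{\prec_C}_{k_n}(\alpha_n)$ we have $\|\hat Q_n\|_K^{1/k_n} \geq T^C_{k_n}(K,\alpha_n)$, and because $k_n \to \infty$ with $\alpha_n/k_n \to \theta \in \mathcal C$, the right side tends to $\tau(K,\theta)$ by the existence of that directional limit (stated in section 7). Combining, $\lim_{n\to\infty}\|\hat Q_n\|_K^{1/k_n} = \tau(K,\theta)$, which is exactly (3), so $\{\hat Q_n\}$ is $\theta aT$ for $K$.

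I expect the main subtlety to be the bookkeeping in the second step — verifying that $z^{\alpha_n}$ genuinely remains the $\prec_C$-leading monomial of $\hat Q_n$ and that no monomial $\prec_C \alpha_n$ of top $C$-degree gets promoted past $\alpha_n$. This is really just the observation that the $\prec_C$ ordering refines $C$-degree, so passing to the top $C$-homogeneous part is order-preserving on the relevant monomials; but one should be careful that for $\theta$ strictly on the open hypotenuse the leading term is automatically of maximal $C$-degree, as opposed to an edge case where $z^{\alpha_n}$ sits below degree $k_n$. Everything else is a direct transcription of the Cauchy-estimate bound (\ref{hom}) and the definitions.
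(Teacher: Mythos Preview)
Your proof is correct and takes exactly the paper's route: the paper's entire proof is the single observation that (\ref{hom}) yields $\|\hat Q_n\|_K \leq \|Q_n\|_K$ when $e^{i\theta}\circ K = K$, and you have merely filled in the surrounding bookkeeping (membership of $\hat Q_n$ in $M^{\prec_C}_{k_n}(\alpha_n)$ and the lower bound via the definition of $\tau(K,\theta)$) that the paper leaves tacit. The subtlety you flag about whether $z^{\alpha_n}$ actually survives in the top homogeneous piece is genuine and is equally unaddressed in the paper's one-line argument.
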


\begin{proof} This follows from (\ref{hom}) giving $||\hat Q_n||_K \leq ||Q_n||_K$ for such $K$.
\end{proof}

Given $K\subset \CC^2$ compact and nonpluripolar, define
$$K_{\rho}:=\{z\in \CC^2: \rho_{C,K}(z)\leq 0\}.$$
Note that if $ e^{i\theta}\circ K =K$ then Theorem \ref{circlek} shows that $K=K_{\rho}$. Moreover, from Remark \ref{rem43},
$$\rho_{C,K}(e^{i\theta}\circ z)= \rho_{C,K}(z)$$
so that $ e^{i\theta}\circ K_{\rho} =K_{\rho}$ and $V_{C,K_{\rho}}^*=\rho^+_{C,K}:=\max[0,\rho_{C,K}]$.

\begin{theorem} \label{bloom3.1} Let $K\subset \CC^2$ be compact and regular and let $\{Q_n\}$ be $\theta aT$ for $K$ with $Q_n \in M^{\prec_C}_{k_n}(\alpha_n)$.Then $\{\hat Q_n\}$ is $\theta aT$ for $K_{\rho}$. Conversely, if $\{H_n\}$ is $\theta aT$ for $K_{\rho}$ with $H_n\in H_{k_n}(C)\cap M^{\prec_C}_{k_n}(\alpha_n)$, then the sequence $\{Tch_K H_n\}$ is $\theta aT$ for $K$. Moreover, 
$$\tau(K_{\rho},\theta)=\tau(K,\theta) \ \hbox{for all} \ \theta \in \mathcal C.$$

\end{theorem}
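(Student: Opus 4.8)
The plan is to establish the three assertions in sequence, using Theorem~\ref{circlek}, Proposition~\ref{bloom3.2}, the directional Chebyshev machinery of Section~7, and the extension-theoretic argument of Theorem~\ref{sicbloom}. First I would prove the identity $\tau(K_\rho,\theta)=\tau(K,\theta)$ for all $\theta\in\mathcal C$, since the two halves of the main claim both rely on comparing Chebyshev constants for $K$ and $K_\rho$. Here the natural route is through $C$-transfinite diameter and Proposition~\ref{prop71}(2): $K$ regular implies (by Proposition~\ref{robinreg}) that $\rho_{C,K}$ is continuous, so $K_\rho=\{\rho_{C,K}\le 0\}$ is a well-behaved compact set containing $\hat K$, and $V^*_{C,K_\rho}=\rho^+_{C,K}$. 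Since $H_{C,K}^+=V^*_{C,K}$ was shown (in the proof of Theorem~\ref{circlek}, using the circle-invariance argument) to agree with $\rho^+_{C,K}$ on $\CC^2\setminus\hat K$ when $K$ is circle-invariant, but for general regular $K$ one has $V^*_{C,K}\le V^*_{C,K_\rho}=\rho^+_{C,K}$ always, and one needs the reverse on the relevant set — the cleanest statement to extract is $\delta_C(K)=\delta_C(K_\rho)$. I would get this from $\delta_C(K)=\exp(-\frac1c\mathcal E(V^*_{C,K},H_C))$ (Theorem~\ref{energyrumely}) together with the fact that $V^*_{C,K}$ and $V^*_{C,K_\rho}=\rho^+_{C,K}$ have the same $C$-Robin function and the same Monge–Amp\`ere mass distribution asymptotics; alternatively, directly from formula (\ref{zahtype2}) once one knows the $\tau$'s agree, so there is a mild circularity to be careful about. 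The non-circular path: show $\tau(K,\theta)\le\tau(K_\rho,\theta)$ from $K\subset K_\rho$ via Proposition~\ref{prop71}(1), and the reverse $\tau(K_\rho,\theta)\le\tau(K,\theta)$ by running the Corollary~\ref{siciak2.3} argument — given $\epsilon>0$ produce $W_1,\dots,W_s\in Poly(mC)$ with $\|W_j\|_K=1$ and $\frac1m\log\max_j|\hat W_j(\zeta)|\ge\rho_{C,K}(\zeta)-\epsilon$, hence these homogeneous pieces have controlled growth, which bounds $\|p\|_{K_\rho}$ in terms of $\|p\|_K$ up to a factor $e^{\epsilon\deg_C p}$ for $C$-homogeneous $p$; feeding this into the definition of $T^C_k$ along the monic classes $M^{\prec_C}_k(\alpha)$ gives $\tau(K_\rho,\theta)\le e^{\epsilon}\tau(K,\theta)$, and letting $\epsilon\to 0$ finishes it. This last bound is the step I expect to be the main obstacle, because one must keep track of the $\prec_C$-ordering constraint (the monic leading term $z^\alpha$ must survive when one multiplies by powers of the $W_j$) and ensure the comparison of norms respects degrees $C$-homogeneously; the Cauchy-estimate decomposition $p=\sum_l\tilde h_l$ of (\ref{polyhom})–(\ref{hom}) is the tool that makes this work, exactly as in Theorem~\ref{sicbloom}.

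\textbf{The forward direction.} Assume $\{Q_n\}$ is $\theta aT$ for $K$ with $Q_n\in M^{\prec_C}_{k_n}(\alpha_n)$. By Proposition~\ref{bloom3.2} (applied to $K_\rho$, which is circle-invariant), it suffices to show $\{\hat Q_n\}$ is $\theta aT$ for $K_\rho$ — but I must first check that $\hat Q_n$ still lies in the right monic class. Since $\hat Q_n=\tilde h_{k_n ab}$ is the top $C$-homogeneous part of $Q_n$, its $\prec_C$-leading monomial is the $\prec_C$-largest monomial of $Q_n$ lying on the hypotenuse; because $\alpha_n/k_n\to\theta\in\mathcal C$ with $\theta$ on the open hypotenuse, for large $n$ the leading monomial $z^{\alpha_n}$ of $Q_n$ is itself $C$-homogeneous of degree $k_n$, so $\hat Q_n\in H_{k_n}(C)\cap M^{\prec_C}_{k_n}(\alpha_n)$; this uses that $\prec_C$ breaks ties within a fixed $C$-degree by the second coordinate, and that multiplication by lower-$C$-degree terms cannot produce anything $\succ_C\alpha_n$ on the hypotenuse. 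Then conditions (1) and (2) of $\theta aT$ transfer to $\{\hat Q_n\}$ verbatim, and condition (3) follows from $\|\hat Q_n\|_{K_\rho}\le$ (something) combined with the reverse inequality $\|\hat Q_n\|_{K_\rho}\ge T^C_{k_n}(K_\rho,\alpha_n)^{k_n}$; using (\ref{hom}) one has $\|\hat Q_n\|_K\le\|Q_n\|_K\to\tau(K,\theta)^{k_n\cdot(1+o(1))}$, and by the first part $\tau(K,\theta)=\tau(K_\rho,\theta)$, so $\limsup\|\hat Q_n\|_{K}^{1/k_n}\le\tau(K_\rho,\theta)$; to upgrade $\|\cdot\|_K$ to $\|\cdot\|_{K_\rho}$ for the $C$-homogeneous polynomials $\hat Q_n$ I again invoke the norm comparison from the first paragraph (which for $C$-homogeneous polynomials is essentially an equality of growth rates since $\rho_{C,K}=V^*_{C,K_\rho}$ off $\hat K$ and $\hat Q_n$ is determined by its values on $\partial P^2$ by Remark~\ref{rem43}). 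Hence $\limsup\|\hat Q_n\|_{K_\rho}^{1/k_n}\le\tau(K_\rho,\theta)$, and the matching lower bound is automatic from the definition of $\tau$, giving condition (3).

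\textbf{The converse direction.} Now suppose $\{H_n\}$ is $\theta aT$ for $K_\rho$ with $H_n\in H_{k_n}(C)\cap M^{\prec_C}_{k_n}(\alpha_n)$; I must show $\{Tch_K H_n\}$ is $\theta aT$ for $K$. Write $Tch_K H_n=H_n+p_{n-1}$ with $p_{n-1}\in Poly((k_n-1)C)$; then $\widehat{Tch_K H_n}=H_n$ (Remark~\ref{foursix}), so $Tch_K H_n\in M^{\prec_C}_{k_n}(\alpha_n)$ still holds (adding lower-$C$-degree terms does not disturb the $\prec_C$-leading monomial), and conditions (1),(2) are inherited. For condition (3), the lower bound $\liminf\|Tch_K H_n\|_K^{1/k_n}\ge\tau(K,\theta)$ is immediate from the definition of $T^C_{k_n}(K,\alpha_n)$ since $Tch_K H_n$ is admissible. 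The upper bound is the substance: since $\{H_n\}$ is $\theta aT$ for $K_\rho=\{\rho_{C,K}\le 0\}$ and $V^*_{C,K_\rho}=\rho^+_{C,K}$, one has $\frac1{k_n}\log|H_n(z)|\le V_{C,K_\rho}(z)+o(1)$ on compacta, hence in particular $\frac1{k_n}\log|H_n(\zeta)|\le\rho_{C,K}(\zeta)+o(1)$ for $\zeta\in\partial P^2$; that is precisely the hypothesis of Theorem~\ref{sicbloom} (with $Q_n=H_n$), which gives $\limsup\|Tch_K H_n\|_K^{1/k_n}\le 1$. But I need the sharper $\limsup\le\tau(K,\theta)$, not just $\le 1$. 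To get this I would rescale: replace $H_n$ by $H_n/\|H_n\|_{K_\rho}$-type normalization is not available (it breaks monicity), so instead I run the Theorem~\ref{sicbloom} extension argument directly with the sharper input $\frac1{k_n}\log|H_n(\zeta)|\le\rho_{C,K}(\zeta)+\frac1{k_n}\log\tau(K,\theta)^{-k_n}\cdot(\text{correction})$ — more precisely, one tracks that $\|H_n\|_{\theta\circ G}\le\|H_n\|_{K_\rho}\cdot e^{o(k_n)}\cdot(\theta^{ab}R)^{k_n}$ rather than just $e^{2\epsilon k_n}(\theta^{ab}R)^{k_n}$, using $\|H_n\|_{K_\rho}^{1/k_n}\to\tau(K_\rho,\theta)=\tau(K,\theta)$; carrying the factor $\|H_n\|_{K_\rho}$ through the Cauchy estimates (\ref{no8}) and the final sum exactly as in the proof of Theorem~\ref{sicbloom} yields $\limsup\|Tch_K H_n\|_K^{1/k_n}\le\tau(K,\theta)$. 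Combined with the lower bound, condition (3) holds, and the displayed equality $\tau(K_\rho,\theta)=\tau(K,\theta)$ was established in the first paragraph, completing the proof. The main obstacle throughout is bookkeeping: ensuring the $\prec_C$-monic structure is preserved under the operations $Q_n\mapsto\hat Q_n$, $H_n\mapsto Tch_K H_n$, and under taking powers, and threading the norm $\|H_n\|_{K_\rho}$ (rather than a crude bound $\le1$) through the holomorphic-extension estimate.
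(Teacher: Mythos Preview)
Your proposal has two genuine gaps, both in the forward direction and in your preliminary attempt to establish $\tau(K,\theta)=\tau(K_\rho,\theta)$ independently.

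\medskip
\textbf{Gap 1: the inclusion $K\subset K_\rho$ (or $\hat K\subset K_\rho$) is false.} You write that $K_\rho$ ``contains $\hat K$'' and then invoke Proposition~\ref{prop71}(1) for $K\subset K_\rho$ to get $\tau(K,\theta)\le\tau(K_\rho,\theta)$. But there is no reason for $\rho_{C,K}\le 0$ on $K$ (or on $\hat K$): the Robin function captures only asymptotic behaviour of $V_{C,K}^*$ along the orbits $\lambda\mapsto\lambda\circ z$, not the value $V_{C,K}^*(z)$ itself. Already in the classical case $C=\Sigma$, a small polydisk around a point far from the origin has Robin function strictly positive at its centre. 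So this inclusion, and with it your independent proof of the $\tau$ equality, collapses.

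\medskip
\textbf{Gap 2: the estimate (\ref{hom}) needs circle-invariance.} In the forward direction you write ``using (\ref{hom}) one has $\|\hat Q_n\|_K\le\|Q_n\|_K$''. But (\ref{hom}) is proved in the paper under the hypothesis $e^{i\theta}\circ K=K$: the Cauchy estimate step requires $\lambda\circ z\in K$ for $|\lambda|=1$, $z\in K$. For a general regular $K$ this fails, so you cannot compare $\|\hat Q_n\|_K$ with $\|Q_n\|_K$ this way, and your subsequent ``upgrade to $\|\cdot\|_{K_\rho}$'' never gets off the ground.

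\medskip
\textbf{What the paper does instead.} The paper does \emph{not} prove the $\tau$ equality first; it obtains the two inequalities as byproducts of the two halves of the theorem. For the forward direction the key step you are missing is a one-line estimate that bypasses both gaps: from $\frac{1}{k_n}\log\frac{|Q_n(z)|}{\|Q_n\|_K}\le V_{C,K}(z)$ one takes $C$-Robin of both sides (the left side has Robin $\frac{1}{k_n}\log|\hat Q_n|-\frac{1}{k_n}\log\|Q_n\|_K$ by Remark~\ref{foursix}) to get
\[
\frac{1}{k_n}\log\frac{|\hat Q_n(z)|}{\|Q_n\|_K}\le \rho_{C,K}(z),\qquad z\in\CC^2,
\]
and hence, \emph{by the very definition of $K_\rho$}, $\|\hat Q_n\|_{K_\rho}\le\|Q_n\|_K$. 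This immediately yields $\tau(K_\rho,\theta)\le\tau(K,\theta)$ with no set inclusion and no appeal to (\ref{hom}). The converse direction is then handled essentially as you outline (rescaling to make $\rho_{C,K}\ge 0$ on $\partial P^2$ and applying Theorem~\ref{sicbloom} to $H_n$, threading the factor $\tau(K_\rho,\theta)$ through), which gives $\tau(K,\theta)\le\tau(K_\rho,\theta)$. The two together give the equality and turn all the chains of inequalities into equalities.
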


\begin{proof} Given $\{Q_n\}$ which are $\theta aT$ for $K$ with $Q_n \in M^{\prec_C}_{k_n}(\alpha_n)$, we have
$$\frac{1}{k_n} \log \frac{|Q_n(z)|}{||Q_n||_K}\leq V_{C,K}(z), \ z\in \CC^2.$$
Thus
$$\frac{1}{k_n} \log \frac{|\hat Q_n(z)|}{||Q_n||_K}\leq \rho_{C,K}(z), \ z\in \CC^2.$$
Hence for $z\in K_{\rho}$, 
$$\frac{1}{k_n} \log |\hat Q_n(z)|\leq \frac{1}{k_n} \log ||Q_n||_K; \ \hbox{i.e.}, \ ||\hat Q_n||_{K_{\rho}}\leq ||Q_n||_K.$$
Since
$$\lim_{n\to \infty} ||Q_n||_K^{1/k_n} =\tau (K,\theta),$$
\begin{equation} \label{tauk} \tau(K_{\rho},\theta)\leq \liminf_{n\to \infty} ||\hat Q_n||_{K_{\rho}}^{1/k_n} \leq \limsup_{n\to \infty} ||\hat Q_n||_{K_{\rho}}^{1/k_n}\leq \tau (K,\theta).\end{equation}

On the other hand, considering $\{H_n\}$ which are $\theta aT$ for $K_{\rho}$ (we can assume $H_n\in H_{k_n}(C)$ from Proposition \ref{bloom3.2}) we have 
$$\lim_{n\to \infty} ||H_n||_{K_{\rho}}^{1/k_n}  = \tau(K_{\rho},\theta).$$
Thus
$$ \limsup_{n\to \infty}\frac{1}{k_n} \log |H_n(z)|= \limsup_{n\to \infty}\frac{1}{k_n} \log \frac{|H_n(z)|}{||H_n||_{K_{\rho}}}+\log \tau(K_{\rho},\theta)$$
$$\leq \log \tau(K_{\rho},\theta) +\rho^+_{C,K}(z), \ z\in \CC^2.$$
By rescaling $K$; i.e., replacing $K$ by $rK$ for appropriate $r\geq 1$ if need be, we can assume that 
$$K_{\rho} \subset \{(z_1,z_2): |z_1|, |z_2|\leq 1\}.$$
In particular, $\rho_{C,K}\geq 0$ on $\partial P^2$. From Theorem \ref{sicbloom} we conclude that
$$ \limsup_{n\to \infty}\frac{1}{k_n} \log ||Tch_K H_n||_K \leq \log \tau(K_{\rho},\theta).$$
We have $Tch_K H_n\in M^{\prec_C}_{k_n}(\alpha_n)$ since $H_n\in H_{k_n}(C)\cap M^{\prec_C}_{k_n}(\alpha_n)$ and hence
\begin{equation} \label{tauk2}\tau (K,\theta)\leq \liminf_{n\to \infty} ||Tch_K H_n||_K^{1/k_n} \leq \limsup_{n\to \infty} ||Tch_K H_n||_K^{1/k_n}\leq \tau(K_{\rho},\theta).\end{equation}
Together with (\ref{tauk}) we conclude that $\tau(K_{\rho},\theta)=\tau(K,\theta)$ and the inequalities in (\ref{tauk}) and (\ref{tauk2}) are equalities.

\end{proof}

Finally, we utilize Theorems \ref{bloom3.1} and  \ref{thm64} together with Propositions \ref{tdv} and \ref{prop71} to prove our main result.

\begin{theorem} \label{mainthm} Let $K\subset \CC^2$ be compact and regular. Let $\{p_n\}$ be a countable family of polynomials with $p_n\in Poly(k_nC)$ such that for every $\theta \in \mathcal C$, there is a subsequence which is $\theta aT$ for $K$. Then
$$ \bigl( \limsup_{n\to \infty}\frac{1}{k_n} \log \frac{|p_n(z)|}{||p_n||_K}\bigr)^* = V_{C,K}(z), \ z\in \CC^2\setminus \hat K.$$

\end{theorem}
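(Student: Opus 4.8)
The plan is to reduce the statement to an application of Theorem~\ref{thm64}, whose hypotheses are precisely two: that $\limsup_n \|p_n\|_K^{1/k_n} = 1$ and that the Robin functions of the $p_n$ recover $\rho_{C,K}$ on $\partial P^2$ in the $\limsup$-then-usc-regularize sense. The family $\{p_n\}$ here is not assumed to have a single $C$-degree, but that is harmless: I will pass to subsequences indexed by the directions $\theta \in \mathcal C$ and assemble the needed $\limsup$ statements from them.

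First I would address the Bernstein–Markov-type estimate $\limsup_n \|p_n\|_K^{1/k_n} = 1$. Given $\theta \in \mathcal C$, a $\theta aT$ subsequence $\{p_{n_j}\}$ satisfies $\|p_{n_j}\|_K^{1/k_{n_j}} \to \tau(K,\theta)$ by definition of $\theta aT$; and since each $p_{n_j} \in M^{\prec_C}_{k_{n_j}}(\alpha_{n_j})$ is monic, we also have $\|p_{n_j}\|_K \geq$ (the $C$-Chebyshev infimum), which forces $\liminf \geq \tau(K,\theta)$ as well. But $\tau(K,\theta) \leq \delta_C(K)^{1/\text{const}}$-type bounds are not what I want directly; rather, the point is that after the standard normalization $p_n/\|p_n\|_K$ the growth is controlled by $V_{C,K}$. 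More carefully: the quantity that enters Theorem~\ref{thm64} is about $p_n/\|p_n\|_K$, so I should first normalize. Set $\tilde p_n := p_n/\|p_n\|_K$; then $\|\tilde p_n\|_K = 1$ trivially, and $v(z) := \bigl(\limsup_n \frac{1}{k_n}\log|\tilde p_n(z)|\bigr)^*$ is exactly the left-hand side of the theorem. So the cleaner route is to apply Theorem~\ref{thm64} to the normalized sequence $\{\tilde p_n\}$ (for which the first hypothesis $\limsup \|\tilde p_n\|_K^{1/k_n} = 1$ holds automatically), and the work is entirely in verifying~(\ref{userob}): $\bigl(\limsup_n \frac{1}{k_n}\log|\hat{\tilde p}_n(\zeta)|\bigr)^* = \rho_{C,K}(\zeta)$ on $\partial P^2$.

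For the upper bound $\bigl(\limsup_n \frac{1}{k_n}\log|\hat{\tilde p}_n(\zeta)|\bigr)^* \leq \rho_{C,K}(\zeta)$: since $\|\tilde p_n\|_K = 1$ we have $\frac{1}{k_n}\log|\tilde p_n| \leq V_{C,K}$ on $\CC^2$, hence passing to Robin functions (using Remark~\ref{foursix}, $\rho_{\frac{1}{k_n}\log|\tilde p_n|} = \frac{1}{k_n}\log|\hat{\tilde p}_n|$, together with monotonicity of the $C$-Robin operation and the usc regularity of $\rho_{C,K}$ from Proposition~\ref{robinreg}) gives $\frac{1}{k_n}\log|\hat{\tilde p}_n(\zeta)| \leq \rho_{C,K}(\zeta)$ directly, so even the $\limsup$ before regularization is $\leq \rho_{C,K}(\zeta)$. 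For the reverse inequality I would fix $\theta \in \mathcal C$ and use the $\theta aT$ subsequence: by Theorem~\ref{bloom3.1}, $\{\hat p_{n_j}\}$ (up to the normalization and the passage $p \mapsto \hat p$ preserving $C$-degree and monic-ness in the sense of Proposition~\ref{bloom3.2}) is $\theta aT$ for $K_\rho$, and since $V_{C,K_\rho}^* = \rho^+_{C,K}$, the homogeneous polynomials $\hat{\tilde p}_{n_j}$ normalized by their $K_\rho$-norm satisfy the Siciak–Zaharjuta growth $\frac{1}{k_{n_j}}\log|\hat{\tilde p}_{n_j}| \to \rho_{C,K}$ along directions filling out $\mathcal C$; combined with $\tau(K_\rho,\theta) = \tau(K,\theta)$ from Theorem~\ref{bloom3.1}, the normalization factors $\|\hat{\tilde p}_{n_j}\|_{K_\rho}^{1/k_{n_j}} \to \tau(K,\theta)/\|p_{n_j}\|_K^{1/k_{n_j}} \to 1$, so the $\limsup$ of $\frac{1}{k_{n_j}}\log|\hat{\tilde p}_{n_j}(\zeta)|$ along this subsequence is at least $\rho_{C,K}(\zeta)$ for $\zeta$ in the appropriate direction-sector; taking the union over $\theta \in \mathcal C$ (which sweeps out all of $\partial P^2$ via the scaling relation $\rho_{C,K}(\lambda \circ \zeta) = \rho_{C,K}(\zeta) + ab\log|\lambda|$ and Remark~\ref{rem43}) and then usc-regularizing yields $\bigl(\limsup_n \frac{1}{k_n}\log|\hat{\tilde p}_n(\zeta)|\bigr)^* \geq \rho_{C,K}(\zeta)$ on $\partial P^2$.

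Having verified both hypotheses of Theorem~\ref{thm64} for $\{\tilde p_n\}$, the conclusion $v = V_{C,K}^*$ on $\CC^2 \setminus \hat K$ follows, and since $K$ is regular $V_{C,K}^* = V_{C,K}$, which is the stated equality. The main obstacle I anticipate is the bookkeeping in the reverse inequality of~(\ref{userob}): one has only a \emph{subsequence} that is $\theta aT$ for each $\theta$, so I must argue that the family of such subsequences, as $\theta$ ranges over $\mathcal C$, is rich enough that the $\limsup$ over the \emph{full} sequence $\{\hat{\tilde p}_n\}$ dominates $\rho_{C,K}$ at every $\zeta \in \partial P^2$ — this is where the continuity of $\theta \mapsto \tau(K,\theta)$ on $\mathcal C$ (from~(\ref{zahtype2})) and the continuity of $\rho_{C,K}$ (Proposition~\ref{robinreg}) are essential, allowing a covering/density argument on $\partial P^2$ analogous to the one in Corollary~\ref{siciak2.3}. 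The normalization subtlety — ensuring $\|\hat{\tilde p}_{n_j}\|_{K_\rho}^{1/k_{n_j}} \to 1$ rather than merely being bounded — also deserves care and follows from combining the two Chebyshev-constant identities $\tau(K_\rho,\theta) = \tau(K,\theta)$ and $\lim \|p_{n_j}\|_K^{1/k_{n_j}} = \tau(K,\theta)$.
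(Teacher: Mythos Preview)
Your reduction to Theorem~\ref{thm64} and the upper bound $w\leq\rho_{C,K}$ are fine, but the argument for the reverse inequality $w\geq\rho_{C,K}$ on $\partial P^2$ has a genuine gap. The assertion that along a $\theta aT$ subsequence ``$\limsup \frac{1}{k_{n_j}}\log|\hat{\tilde p}_{n_j}(\zeta)|\geq\rho_{C,K}(\zeta)$ for $\zeta$ in the appropriate direction-sector'' conflates two unrelated notions of direction: $\theta\in\mathcal C$ indexes the \emph{leading monomial exponent ratio} $\alpha_n/k_n$, while $\zeta\in\partial P^2$ is a \emph{geometric point} where the Robin function is evaluated. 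Knowing that $\{\hat p_{n_j}\}$ is $\theta aT$ for $K_\rho$ gives only $\|\hat p_{n_j}\|_{K_\rho}^{1/k_{n_j}}\to\tau(K_\rho,\theta)$, a statement about sup norms; it yields no pointwise lower bound at any particular $\zeta$. (Indeed, the leading monomial $z^{\alpha_{n_j}}$ itself has $|z^{\alpha_{n_j}}|\equiv 1$ on all of $T^2\subset\partial P^2$, so there is no natural ``sector'' of $\partial P^2$ attached to $\theta$.) Your covering argument therefore never gets off the ground: you cannot sweep out $\partial P^2$ by $\theta$-sectors because no such sectors exist.

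The paper's proof avoids this obstacle by an indirect argument that uses the $\theta aT$ information only through \emph{norms}, which is all it gives. One sets $Z:=\{w<0\}$ and shows $\mathrm{int}(K_\rho)\subset Z$; if the inclusion were strict one could find a ball $B\subset Z$ meeting $\partial K_\rho$. Since $w\leq 0$ on $B\cup K_\rho$, Hartogs' lemma applied to each $\theta aT$ subsequence $\{\hat p_n\}_{n\in\NN_\theta}$ (which is $\theta aT$ for $K_\rho$ by Theorem~\ref{bloom3.1}) gives $\|\hat p_n\|_{B\cup K_\rho}^{1/k_n}\to\tau(K,\theta)$, hence $\tau(B\cup K_\rho,\theta)\leq\tau(K_\rho,\theta)$ for every $\theta\in\mathcal C$. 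By Proposition~\ref{prop71} this forces $\delta_C(B\cup K_\rho)=\delta_C(K_\rho)$, and then Proposition~\ref{tdv} gives $V_{C,B\cup K_\rho}=V_{C,K_\rho}=\rho_{C,K}^+$, contradicting $B\setminus K_\rho\neq\emptyset$. The essential new ingredient you are missing is this passage through transfinite diameter (Propositions~\ref{tdv} and~\ref{prop71}); without it there is no route from norm-asymptotics to the needed pointwise inequality.
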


\begin{proof} Let
$$v(z):=\bigl( \limsup_{n\to \infty}\frac{1}{k_n} \log \frac{|p_n(z)|}{||p_n||_K}\bigr)^* .$$
Clearly $v\leq V_{C,K}$ on all of $\CC^2$. Let
$$w(z):=\bigl( \limsup_{n\to \infty}\frac{1}{k_n} \log \frac{|\hat p_n(z)|}{||p_n||_K}\bigr)^*.$$ 
To finish the proof, it suffices, by Theorem  \ref{thm64}, to show that 
$$w(z)=\rho_{C,K}(z), \ z \in \partial P^2.$$
Clearly $w\leq \rho_{C,K}$ in $\CC^2$ since $v\leq V_{C,K}$. To show the reverse inequality, we proceed as follows. Let
$$Z:=\{z\in \CC^2: w(z) <0\}.$$
Then $Z$ is open since $w$ is usc. We claim that $int(K_{\rho})\subset Z$. For if $z\in int(K_{\rho})$, we have 
$\rho_{C,K}(z) = -a<0$. Thus $w(z)\leq -a <0$ and $z\in Z$. Moreover, both sets $K_{\rho}$ and $Z$ satisfy the invariance property
$$ e^{i\theta}\circ K_{\rho} =K_{\rho} \ \hbox{and} \ e^{i\theta}\circ Z =Z$$
(for $Z$ this follows since $\hat p_n\in H_{k_n}(C)$). Thus to show the equality $w(z)=\rho_{C,K}(z)$ it suffices to verify the equality
$$int(K_{\rho}) =Z.$$
Suppose this is false. Then we take a point $z_0\in \partial K_{\rho}\cap Z$ and a closed ball $B$ centered at $z_0$ contained in $Z$. Since $B$ is regular and $K$ is assumed regular, by Proposition \ref{robinreg} together with Lemma 4.1 of \cite{Bfam}, $B\cup K_{\rho}$ is regular. 

Given $\theta \in \mathcal C$, by assumption there exists a subsequence $\NN_{\theta}\subset \NN$ such that $\{p_n\}_{n\in \NN_{\theta}}$ is $\theta aT$ for $K$. From Theorem \ref{bloom3.1}, $\{\hat p_n\}_{n\in \NN_{\theta}}$ is $\theta aT$ for $K_{\rho}$. Since $w\leq 0$ on $B\cup K_{\rho}$, for $z\in B\cup K_{\rho}$ we have
$$ \limsup_{n\in \NN_{\theta}}\frac{1}{k_n} \log |\hat p_n(z)|\leq \limsup_{n\in \NN_{\theta}}\frac{1}{k_n} \log ||p_n||_K=\log \tau (K,\theta).$$
Using Hartogs lemma, we conclude that
$$\log \tau(B\cup K_{\rho},\theta)\leq \limsup_{n\in \NN_{\theta}}\frac{1}{k_n} \log ||\hat p_n||_{B\cup K_{\rho}} \leq \log \tau (K,\theta).$$
Hence
$$\tau(B\cup K_{\rho},\theta) \leq \tau(K,\theta)=\tau(K_{\rho},\theta)$$
for all $\theta \in \mathcal C$. Since $\tau(B\cup K_{\rho},\theta) \geq \tau(K_{\rho},\theta)$ we see that
$$\tau(B\cup K_{\rho},\theta) = \tau(K_{\rho},\theta)$$
for all $\theta \in \mathcal C$. From Propositions \ref{tdv} and \ref{prop71} (and regularity of the sets $K_{\rho}, \ B\cup K_{\rho}$), 
$$V_{C,B\cup K_{\rho}}=V_{C,K_{\rho}}.$$
But $V_{C,K_{\rho}}=\rho_{C,K}^+=\max[0,\rho_{C,K}]$ thus if $B\setminus K_{\rho}\not = \emptyset$, since $\rho_{C,K}>0$ on $\CC^2\setminus K_{\rho}$ and $V_{C,B\cup K_{\rho}}=0$ on $B\cup K_{\rho}$, this is a contradiction.

\end{proof}

As examples of sequences of polynomials satisfying the hypotheses of Theorem \ref{mainthm}, as in \cite{Bfam} we have
\begin{enumerate}
\item the family $\{t_{k,\alpha}\in M^{\prec_C}_k(\alpha)\}_{k,\alpha}$ of Chebyshev polynomials (minimial supremum norm) for $K$ in these classes;
\item for a Bernstein-Markov measure $\mu$ on $K$, the corresponding polynomials $\{q_{k,\alpha}\in M^{\prec_C}_k(\alpha)\}_{k,\alpha}$ of minimal $L^2(\mu)$ norm (see Corollary \ref{bmext});
\item any sequence $p_{\alpha(s)}=z^{\alpha(s)}-L_{\alpha(s-1)}(z^{\alpha(s)})$ where $\{z^{\alpha(s)}\}$ is an enumeration of monomials with the $\prec_C$ order and $L_{\alpha(s-1)}(z^{\alpha(s)})$ is the Lagrange interpolating polynomial for the monomial $z^{\alpha(s)}$ at points $\{z_{s-1,j}\}_{j=1,...s-1}$ in the $(s-1)-$st row in a triangular array $\{z_{jk}\}_{j=1,2,...; \ k=1,...j}\subset K$ where the Lebesgue constants $\Lambda_{\alpha(s)}$ associated to the array grow subexponentially. Here, 
$$\Lambda_{\alpha(s)}:=\max_{z\in K} \sum_{j=1}^s |l_{sj}(z)|$$
where $l_{sj} \in Poly\bigl(\deg_C(z^{\alpha(s)})C\bigr)$ satisfies $l_{sj}(z_{sk})=\delta_{jk}, \ j,k=1,...,s$ and we require 
$$\lim_{s\to \infty} \Lambda_{\alpha(s)}^{1/\deg_C(z^{\alpha(s)})}=1.$$
We refer to Corollary 4.4 of \cite{Bfam} for details. 

\end{enumerate}

\begin{remark} Example (3) includes the case of a sequence of {\it $C-$Fekete polynomials} for $K$ (cf., p 1562 of \cite{Bfam}). The case of {\it $C-$Leja polynomials} for $K$, defined using $C-$Leja points as in \cite{Sione}, also satisfy the hypotheses of Theorem \ref{mainthm}.  This can be seen by following the proof of Corollary 4.5 in \cite{Bfam}. The proof that $C-$Leja polynomials satisfy the analogue of (4.28) in \cite{Bfam} is given in Theorem 1.1 of \cite{Sione}.

\end{remark}

\section{Further directions}

We reiterate that the arguments given in the note for triangles $C$ in $\RR^2$ with vertices $(0,0), (b,0), (0,a)$ where $a,b$ are relatively prime positive integers should generalize to the case of a simplex 
$$C=co\{(0,...,0),(a_1,0,...,0),...,(0,...,0,a_d)\}$$
in $(\RR^+)^d$ with $a_1,...,a_d$ pairwise relatively prime using the definition of the $C-$Robin function in Remark \ref{generald}. Indeed, following the arguments on pp. 72-82 of \cite{BloomLev} one should also be able to prove {\it weighted} versions of the $C-$Robin results for such simplices $C$
in $\RR^d$. We indicate the transition from the $C-$weighted situation for $d=2$ to a $\tilde C-$homogeneous unweighted situation for $d=3$. As in section 4, we lift the circle action on $\CC^2$, 
$$\lambda \circ (z_1,z_2):=(\lambda^az_1,\lambda^b z_2),$$
to $\CC^3$ via
$$\lambda \circ (t,z_1,z_2):=(\lambda t,\lambda^az_1,\lambda^b z_2).$$
Given a compact set $K\subset \CC^2$ and an admissible weight function $w\geq 0$ on $K$, i.e., $w$ is usc and $\{z\in K: w(z) >0\}$ is not pluripolar, we associate the set 
$$\tilde K_w:=\{(t\circ (1,z_1,z_2): (z_1,z_2)\in K, \ |t|=w(z_1,z_2)\}.$$
It follows readily that 
$$e^{i\theta} \circ \tilde K_w = \tilde K_w.$$
Setting $\tilde C=co\{(0,0,0),(1,0,0),(0,b,0),(0,0,a)\}$, we can relate a weighted $C-$Robin function $\rho^w_{C,K}$ to the $\tilde C-$Robin function $\rho_{\tilde C, \tilde K_w}$. Using these weighted ideas, the converse to Proposition \ref{tdv} should follow as in \cite{BloomLev}.

\end{document}